\journal{Elsevier}
\newcommand{\domD}{\mathscr{D}}
\renewcommand{\pi}{\uppi}
\renewcommand{\Re}{\operatorname{Re}}
\renewcommand{\Im}{\operatorname{Im}}
\DeclareMathOperator{\arcsinh}{\mathrm{arcsinh}}
\DeclareMathOperator{\Ci}{\mathrm{Ci}}
\DeclareMathOperator{\si}{\mathrm{si}}
\DeclareMathOperator{\Order}{\mathrm{O}}
\DeclareMathOperator{\E}{\mathrm{e}}
\DeclareMathOperator{\I}{\mathrm{i}}
\newcommand{\D}{\mathrm{d}}
\newdefinition{definition}{Definition}[section]
\newtheorem{theorem}{Theorem}[section]
\newtheorem{lemma}[theorem]{Lemma}
\newproof{proof}{Proof}
\begin{document}

\begin{frontmatter}

\title{New conformal map for the trapezoidal formula for infinite integrals of
unilateral rapidly decreasing functions\tnoteref{mytitlenote}}
\tnotetext[mytitlenote]{This work was partially supported by JSPS Grant-in-Aid for Young Scientists (B) JP17K14147.}

\author{Tomoaki Okayama}
\address{Graduate School of Information Sciences, Hiroshima City University,
3-4-1, Ozuka-higashi, Asaminami-ku, Hiroshima 731-3194, Japan}
\ead{okayama@hiroshima-cu.ac.jp}

\author{Tomoki Nomura}
\address{Hitachi Information Engineering, Ltd.,
Hiroshima K building 7F, 6-13, Nakamachi, Naka-ku,
Hiroshima 730-0037, Japan}

\author{Saki Tsuruta}
\address{Hiroshima Municipal Funairi High School,
1-4-4, Funairi Minami, Naka-ku, Hiroshima 730-0847, Japan}




\begin{abstract}
While the trapezoidal formula can attain exponential convergence
when applied to infinite integrals of bilateral rapidly decreasing functions,
it is not capable of this in the case of unilateral rapidly decreasing
functions. To address this issue, Stenger proposed the application of a
conformal map to the integrand such that it transforms into
bilateral rapidly decreasing functions.
Okayama and Hanada modified the conformal map and provided a
rigorous error bound for the modified formula.
This paper proposes a further improved conformal map,
with two rigorous error bounds provided for the improved formula.
Numerical examples comparing the proposed and existing formulas are
also given.
\end{abstract}

\begin{keyword}
trapezoidal formula\sep Conformal map\sep
Computable error bound
\MSC[2010] 65D30 \sep 65D32 \sep 65G20
\end{keyword}

\end{frontmatter}

\section{Introduction and summary}

In this paper,
we are concerned with the trapezoidal formula
for the infinite integral, expressed as
\[
\int_{-\infty}^{\infty} f(x) \D x
\approx
 h\sum_{k=-\infty}^{\infty} f(kh),
\]
where $h$ is a mesh size.
This approximation formula is fairly accurate if the integrand
$f(x)$ is analytic,
which has been known since several decades ago~\cite{Schwaltz,Stenger73}.
For example, the approximation
\[
 \int_{-\infty}^{\infty} \E^{-x^2} \D x
\approx
 h\sum_{k=-\infty}^{\infty} \E^{-(kh)^2}
\]
gives the correct answer in double-precision with $h=1/2$,
and the approximation
\[
 \int_{-\infty}^{\infty} \frac{1}{4+x^2} \D x
\approx
 h\sum_{k=-\infty}^{\infty} \frac{1}{4+(kh)^2}
\]
gives the correct answer in double-precision with $h=1/3$.
In general, however, the infinite sum on the right-hand side
cannot be calculated,
and thus, the sum has to be truncated at some $M$ and $N$ as
\[
\int_{-\infty}^{\infty} f(x) \D x
\approx
 h\sum_{k=-M}^{N} f(kh).
\]
In the case where $f(x)=\E^{-x^2}$, this approximation requires
$h = 1/2$ and $M=N=12$ to obtain the correct answer in double-precision.
On the other hand, in the case where $f(x)=1/(4+x^2)$,
this approximation requires $h=1/3$ and $M=N=10^{16}$
to obtain the correct answer in double-precision.
This is because $f(x) = \E^{-x^2}$ is a rapidly decreasing function,
i.e., $f$ decays exponentially as $x\to\pm\infty$,
whereas $f(x) = 1/(4+x^2)$ is not.

In the case where the integrand $f(x)$ is not a rapidly decreasing function,
a useful solution is the application of an appropriate conformal map
before applying the (truncated) trapezoidal formula.
When $f(x)$ decays algebraically as $x\to\pm\infty$ like $f(x) = 1 / (4 + x^2)$,
by applying a conformal map $x = \sinh t$, a new integral is obtained:
\[
 \int_{-\infty}^{\infty}f(x)\D x
=\int_{-\infty}^{\infty}f(\sinh t)\cosh t\,\D t,
\]
where the transformed integrand $f(\sinh t)\cosh t$ decays
exponentially as $t\to\pm\infty$.
Therefore, the (truncated) trapezoidal formula should yield
an accurate result when applied to the new integral.
Appropriate conformal maps for certain typical cases
have been usefully summarized by Stenger~\cite{stenger93:_numer,Stenger}.

One of the cases listed in the summary is rather convoluted:
the integrand $f(x)$ decays exponentially as $x\to\infty$,
but decays algebraically as $x\to-\infty$,
like $f(x)=1/\{(4+x^2)(1+\E^{x})\}$.
We refer to such a function as a
\emph{unilateral rapidly decreasing function}.
In such a case,
Stenger~\cite{Stenger} proposed the employment of a conformal map
\begin{align*}
x = \psi(t) = \sinh(\log(\arcsinh(\E^t))),
\end{align*}
and applied the trapezoidal formula as
\begin{equation}
\int_{-\infty}^{\infty} f(x) \D x
=\int_{-\infty}^{\infty} f(\psi(t))\psi'(t)\D t
\approx h\sum_{k=-M}^N f(\psi(kh))\psi'(kh).
\label{eq:Stenger-formula}
\end{equation}
Furthermore, by appropriately setting $h$, $M$, and $N$
depending on the given positive integer $n$,
he theoretically analyzed the error as $\Order(\E^{-\sqrt{2\pi d \mu' n}})$,
where $\mu'$ indicates the decay rate of the transformed integrand,
and $d$ indicates the width of the domain in which
the transformed integrand is analytic (described in detail further on).
Okayama and Hanada~\cite{Okayama-Hanada} slightly modified
the conformal map as follows:
\[
 x = \tilde{\psi}(t) = 2\sinh(\log(\arcsinh(\E^t))),
\]
and derived a new approximation formula:
\begin{equation}
\int_{-\infty}^{\infty} f(x) \D x
=\int_{-\infty}^{\infty} f(\tilde{\psi}(t))\tilde{\psi}'(t)\D t
\approx h\sum_{k=-M}^N f(\tilde{\psi}(kh))\tilde{\psi}'(kh).
\label{eq:Okayama-Hanada-formula}
\end{equation}
Furthermore, they theoretically showed that
the error of the modified formula, say $E_n$, is bounded by
\begin{equation}
 |E_n| \leq C \E^{-\sqrt{2\pi d \mu n}},
\label{eq:error-bound-form}
\end{equation}
where $\mu\geq \mu'$, and $C$ is explicitly given in a computable form.
This inequality not only shows that
the modified formula~\eqref{eq:Okayama-Hanada-formula}
can attain faster convergence than~\eqref{eq:Stenger-formula},
but it also indicates that the error can be rigorously estimated by the right-hand side.
This is useful for verified numerical integration.

The present work improves upon their results. Rather than
the conformal map $x=\psi(t)$ or $x=\tilde{\psi}(t)$,
we propose a new conformal map
\[
 x = \phi(t) = 2\sinh(\log(\log(1+\E^t))).
\]
The principle of this conformal map is derived from
the fact that the convergence rate is improved
by replacing $\arcsinh(\E^t)$ with $\log(1+\E^t)$
in some fields~\cite{Hara,OkaMachi,OkaShinKatsu}.
Consequently,
the following approximation formula is derived:
\begin{equation}
\int_{-\infty}^{\infty} f(x) \D x
=\int_{-\infty}^{\infty} f(\phi(t))\phi'(t)\D t
\approx h\sum_{k=-M}^N f(\phi(kh))\phi'(kh).
\label{eq:Our-formula}
\end{equation}
Furthermore, as the main contribution of this work,
we provide two (general and special) theoretical error bounds
in the same form as~\eqref{eq:error-bound-form},
where $\mu$ does not change, but
a larger $d$ can be taken as compared to that in the previous studies.
This indicates that the improved formula~\eqref{eq:Our-formula}
can attain faster convergence than~\eqref{eq:Stenger-formula}
and~\eqref{eq:Okayama-Hanada-formula}.

The remainder of this paper is organized as follows.
First, existing and new theorems are summarized
in Section~\ref{sec:main_theorem}.
Then, numerical examples are provided in Section~\ref{sec:numer}.
Finally, proofs of the new theorems are given in Sections~\ref{sec:proofs}
and~\ref{sec:proofs-2}.

\section{Summary of existing and new results}
\label{sec:main_theorem}

Sections~\ref{subsec:Stenger-result}
and~\ref{subsec:Okayama-Hanada-result} describe the existing results,
and Sections~\ref{subsec:New-result-1}
and~\ref{subsec:New-result-2} describe the new results.
First, the relevant notations are introduced.
Let $\domD_d$ be a strip domain
defined by $\domD_d=\{\zeta\in\mathbb{C}:|\Im \zeta|< d\}$
for $d>0$.
Furthermore, let
$\domD_d^{-}=\{\zeta\in\domD_d :\Re\zeta< 0\}$
and
$\domD_d^{+}=\{\zeta\in\domD_d :\Re\zeta\geq 0\}$.

\subsection{Error analysis of Stenger's formula}
\label{subsec:Stenger-result}

An error analysis for Stenger's formula~\eqref{eq:Stenger-formula}
can be expressed as the following theorem,
which is a restatement of an existing theorem~\cite[Theorem~1.5.16]{Stenger}.

\begin{theorem}[Okayama--Hanada~{\cite[Theorem~2.1]{Okayama-Hanada}}]
\label{thm:Stenger}
Assume that $f$ is analytic in $\psi(\domD_d)$ with $0<d<\pi/2$,
and that there exist positive constants $K$, $\alpha$, and $\beta$
such that
\begin{align}
 |f(z)|&\leq K |\E^{-z}|^{2\beta}\label{leq:f-Dd-plus-original}
\intertext{holds for all $z\in\psi(\domD_d^{+})$, and}
 |f(z)|&\leq K \frac{1}{|z|^{\alpha+1}}\label{leq:f-Dd-minus-original}
\end{align}
holds for all $z\in\psi(\domD_d^{-})$.
Let $\mu = \min\{\alpha,\beta\}$,
let $M$ and $N$ be defined as
\begin{equation}
\begin{cases}
M=n,\quad N=\lceil\alpha n/\beta\rceil
 & \,\,\,(\text{if}\,\,\,\mu = \alpha),\\
N=n,\quad M=\lceil\beta n/\alpha\rceil
 &  \,\,\,(\text{if}\,\,\,\mu = \beta),
\end{cases}
\label{eq:Def-MN}
\end{equation}
and let $h$ be defined as
\begin{equation}
h = \sqrt{\frac{2\pi d}{\mu n}}.
\label{eq:Def-h}
\end{equation}
Then, there exists a constant $C$ independent of $n$, such that
\[
\left|
\int_{-\infty}^{\infty}f(x)\D x
- h\sum_{k=-M}^N f(\psi(kh))\psi'(kh)
\right|
\leq C \E^{-\sqrt{2 \pi d \mu n}}.
\]
\end{theorem}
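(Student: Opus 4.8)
The statement is Stenger's Theorem~1.5.16 restated, and the plan is to run the classical error analysis for a trapezoidal formula built on a single-exponential transformation. I would put $F(t) = f(\psi(t))\,\psi'(t)$ and split the error as $E_{\mathrm{d}} + E_{\mathrm{t}}$, where the \emph{discretization error}
\[
E_{\mathrm{d}} = \int_{-\infty}^{\infty} F(t)\,\D t - h\sum_{k=-\infty}^{\infty} F(kh)
\]
measures the quality of the infinite trapezoidal sum, and the \emph{truncation error}
\[
E_{\mathrm{t}} = h\sum_{k=-\infty}^{-M-1} F(kh) + h\sum_{k=N+1}^{\infty} F(kh)
\]
is the discarded tail. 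It then suffices to show that each of $|E_{\mathrm{d}}|$ and $|E_{\mathrm{t}}|$ is $\Order(\E^{-\sqrt{2\pi d\mu n}})$ under the choices~\eqref{eq:Def-MN}--\eqref{eq:Def-h}.

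For $E_{\mathrm{d}}$ I would first check that $F$ is analytic on $\domD_d$: when $0<d<\pi/2$ and $t\in\domD_d$ one has $\Re(\E^t)>0$, so $\E^t$ avoids the branch points $\pm\I$ of $\arcsinh$ and $\arcsinh(\E^t)$ lies in the right half-plane, whence $\log(\arcsinh(\E^t))$ is analytic; composing with the entire function $\sinh$ makes $\psi$ and $\psi'$ analytic on $\domD_d$, and analyticity of $f$ on $\psi(\domD_d)$ gives that of $F$ — this is exactly where $d<\pi/2$ enters. I would then invoke the standard contour-shift (Poisson-summation) estimate
\[
|E_{\mathrm{d}}| \leq \frac{\E^{-2\pi d/h}}{1-\E^{-2\pi d/h}}\,N(F,\domD_d),
\qquad
N(F,\domD_d) = \lim_{y\to d^-}\int_{-\infty}^{\infty}\bigl(|F(x+\I y)| + |F(x-\I y)|\bigr)\,\D x,
\]
and verify $N(F,\domD_d)<\infty$ by splitting each line integral at $\Re x = 0$, using~\eqref{leq:f-Dd-minus-original} on the half that lands in $\psi(\domD_d^{-})$ together with a uniform lower bound for $|\psi|$ and upper bound for $|\psi'|$ there, and~\eqref{leq:f-Dd-plus-original} on the half that lands in $\psi(\domD_d^{+})$ together with uniform upper bounds for $|\E^{-\psi}|^{2\beta}$ and $|\psi'|$. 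Since $2\pi d/h = \sqrt{2\pi d\mu n}$ by~\eqref{eq:Def-h}, this would give $|E_{\mathrm{d}}| = \Order(\E^{-\sqrt{2\pi d\mu n}})$.

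For $E_{\mathrm{t}}$ I would use the real-axis asymptotics of the map: as $t\to+\infty$, $\psi(t)\sim t/2$ while $\psi'(t)$ stays bounded, so~\eqref{leq:f-Dd-plus-original} yields $|F(t)|\leq C_{+}\E^{-\beta t}$ for $t\geq 0$; as $t\to-\infty$, $\psi(t)\sim -\tfrac12\E^{-t}$ and $\psi'(t)\sim \tfrac12\E^{-t}$, so~\eqref{leq:f-Dd-minus-original} yields $|F(t)|\leq C_{-}\E^{\alpha t}$ for $t\leq 0$. Summing the resulting geometric-type series gives
\[
h\sum_{k=N+1}^{\infty}|F(kh)| \leq (\text{const})\,\E^{-\beta N h},
\qquad
h\sum_{k=-\infty}^{-M-1}|F(kh)| \leq (\text{const})\,\E^{-\alpha M h},
\]
with constants that stay bounded as $h\to 0$. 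Inserting~\eqref{eq:Def-MN} and~\eqref{eq:Def-h} one checks that in the case $\mu=\alpha$ we have $\alpha M h = \alpha n h = \sqrt{2\pi d\mu n}$ and $\beta N h \geq \alpha n h = \sqrt{2\pi d\mu n}$, and symmetrically when $\mu=\beta$; hence $|E_{\mathrm{t}}| = \Order(\E^{-\sqrt{2\pi d\mu n}})$. Adding the two bounds would complete the proof with an $n$-independent $C$.

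The one genuinely technical step, I expect, is the uniform complex-variable estimate of $\psi$ and $\psi'$ over $\domD_d$ needed for $N(F,\domD_d)<\infty$: a lower bound $|\psi(x+\I y)|\geq (\text{const})\,|\E^{-(x+\I y)}|$ on $\domD_d^{-}$ and upper bounds on $|\psi'|$ and $|\E^{-\psi}|$ of the correct exponential order on $\domD_d^{+}$, all uniform for $|y|<d$ and controlling the behaviour near both ends of the strip and near the lines $\Im t=\pm d$. Granting those, the Poisson-summation bound, the tail estimates, and the exponent bookkeeping are all routine.
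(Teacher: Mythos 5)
The paper does not prove Theorem~\ref{thm:Stenger} itself (it is quoted from Okayama--Hanada and ultimately from Stenger), but your sketch follows exactly the architecture the paper uses for its own Theorems~\ref{thm:New1} and~\ref{thm:New2}: the split into discretization and truncation errors as in~\eqref{eq:decompose-disc-trun}, the $\mathbf{H}^1(\domD_d)$ contour estimate of Theorem~\ref{thm:Stenger-disc}, real-axis tail bounds from the asymptotics of the map, and the exponent bookkeeping $2\pi d/h=\alpha Mh=\sqrt{2\pi d\mu n}$. Your asymptotics for $\psi$, the analyticity argument for $d<\pi/2$, and the identification of the uniform bounds on $|\psi|$, $|\psi'|$, and $|\E^{-\psi}|$ over the strip as the only technical work are all correct, so the proposal is sound and essentially the standard proof.
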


\subsection{Error bound for the formula by Okayama and Hanada}

\label{subsec:Okayama-Hanada-result}

Okayama and Hanada~\cite{Okayama-Hanada}
proposed the replacement of $\psi$ with $\tilde{\psi}$ in
Stenger's formula~\eqref{eq:Stenger-formula}. They also provided
the following theoretical error bound
for the modified formula~\eqref{eq:Okayama-Hanada-formula}.

\begin{theorem}[Okayama--Hanada~{\cite[Theorem~2.2]{Okayama-Hanada}}]
\label{thm:New}
Assume that $f$ is analytic in $\tilde{\psi}(\domD_d)$ with $0<d<\pi/2$,
and that there exist positive constants $K$, $\alpha$, and $\beta$
such that
\begin{align}
 |f(z)|&\leq K |\E^{-z}|^{\beta}
 \label{leq:f-Dd-plus}
\end{align}
holds for all $z\in\tilde{\psi}(\domD_d^{+})$, and
\begin{align}
 |f(z)|&\leq K\frac{1}{|4 + z^2|^{(\alpha+1)/2}}
 \label{leq:f-Dd-minus}
\end{align}
holds for all $z\in\tilde{\psi}(\domD_d^{-})$.
Let $\mu = \min\{\alpha,\beta\}$,
let $M$ and $N$ be defined as~\eqref{eq:Def-MN},
and let $h$ be defined as~\eqref{eq:Def-h}.
Then, it holds that
\[
\left|
\int_{-\infty}^{\infty}f(x)\D x
- h\sum_{k=-M}^N f(\tilde{\psi}(kh))\tilde{\psi}'(kh)
\right|
\leq K\left(\frac{2C_1}{1-\E^{-\sqrt{2\pi d \mu}}} + C_2\right)
 \E^{-\sqrt{2 \pi d \mu n}},
\]
where $C_1$ and $C_2$ are constants defined as
\begin{align*}
C_1&
=\frac{\gamma_d}{\alpha\arctan(\gamma_d)}
\left\{\frac{\gamma_d}{2}\left(1 + \frac{1}{\sin^2 1}\right)\right\}^{\alpha}
+\frac{(1 + \sigma^2)\sqrt{\gamma_d}}{\beta}
\left\{\frac{\sqrt{2}\E^{\sigma}}{\cos(d/2)}\right\}^{\beta},\\
C_2&=\frac{1}{\alpha}
\left\{\frac{1}{2}\left(1+\frac{1}{\sin^2 1}\right)\right\}^{\alpha}
+\frac{1+\sigma^2}{\beta}\left(\frac{\E^{\sigma}}{2}\right)^{\beta},\\
\end{align*}
where $\gamma_d=1/\cos(d)$ and $\sigma=1/\arcsinh(1)$.
\end{theorem}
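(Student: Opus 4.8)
The plan is to decompose the total error into a \emph{discretization error} and a \emph{truncation error}, and to bound each with explicit constants. Writing $F(t)=f(\tilde{\psi}(t))\tilde{\psi}'(t)$, we use
\[
\left|\int_{-\infty}^{\infty}F(t)\D t-h\sum_{k=-M}^{N}F(kh)\right|
\leq
\left|\int_{-\infty}^{\infty}F(t)\D t-h\sum_{k=-\infty}^{\infty}F(kh)\right|
+h\sum_{k=-\infty}^{-M-1}|F(kh)|+h\sum_{k=N+1}^{\infty}|F(kh)|.
\]

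\emph{Step 1 (analyticity and boundary estimates).} First I would check that $\tilde{\psi}$ is analytic on $\domD_d$: since $|\Im\zeta|<d<\pi/2$, the value $\E^{\zeta}$ stays in the open right half-plane and therefore avoids the branch cut of $\arcsinh$ on the imaginary axis; moreover $\arcsinh(\E^{\zeta})$ again has positive real part, so its principal logarithm is analytic, and hence $F=(f\circ\tilde{\psi})\cdot\tilde{\psi}'$ is analytic on $\domD_d$ by the hypothesis that $f$ is analytic on $\tilde{\psi}(\domD_d)$. The substantial part of this step is to derive explicit bounds, valid for $\zeta=x+\I y$ with $|y|\leq d$, of the shape
\[
|\tilde{\psi}'(\zeta)|\leq(\text{explicit}),\qquad |4+\tilde{\psi}(\zeta)^{2}|\geq(\text{explicit}),\qquad \Re\tilde{\psi}(\zeta)\geq(\text{explicit})\ \text{on}\ \domD_d^{+},
\]
so that, combining with \eqref{leq:f-Dd-plus} when $\Re\zeta\geq0$ and with \eqref{leq:f-Dd-minus} when $\Re\zeta<0$, one obtains an integrable majorant for $|F(x\pm\I d)|$. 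The constants $\gamma_d=1/\cos d$, $\sigma=1/\arcsinh(1)$, $\sqrt{2}\,\E^{\sigma}/\cos(d/2)$, and $\frac{1}{2}(1+1/\sin^{2}1)$ all arise from these estimates. I expect this to be the main obstacle: it requires careful manipulation of $\sinh$, $\log$, and $\arcsinh$ of complex arguments and a certain amount of optimisation to extract sharp numerical constants.

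\emph{Step 2 (discretization error).} With the boundary bounds of Step 1 in hand, I would invoke the classical bound for the trapezoidal rule applied to a function analytic on $\domD_d$ whose traces on $\Im\zeta=\pm d$ are integrable (see Stenger~\cite{Stenger}),
\[
\left|\int_{-\infty}^{\infty}F(t)\D t-h\sum_{k=-\infty}^{\infty}F(kh)\right|
\leq
\frac{\E^{-2\pi d/h}}{1-\E^{-2\pi d/h}}\int_{-\infty}^{\infty}\bigl(|F(t-\I d)|+|F(t+\I d)|\bigr)\D t,
\]
split each boundary integral at $t=0$, and integrate the explicit majorants from Step 1; the part over $(-\infty,0)$ uses the algebraic hypothesis and contributes a term of the shape of the first summand of $C_1$ (the antiderivative of $1/(4+x^2)$ producing the $\arctan\gamma_d$), while the part over $[0,\infty)$ uses the exponential hypothesis and contributes the second summand. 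Since $h=\sqrt{2\pi d/(\mu n)}$ gives $\E^{-2\pi d/h}=\E^{-\sqrt{2\pi d\mu n}}$ and $1-\E^{-\sqrt{2\pi d\mu n}}\geq1-\E^{-\sqrt{2\pi d\mu}}$ for $n\geq1$, the discretization error is at most $K\{2C_1/(1-\E^{-\sqrt{2\pi d\mu}})\}\E^{-\sqrt{2\pi d\mu n}}$.

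\emph{Step 3 (truncation error) and conclusion.} On the real line the same estimates (specialised to $y=0$) give $|F(t)|\leq K\{\frac{1}{2}(1+1/\sin^{2}1)\}^{\alpha}\E^{\alpha t}$ for $t\leq0$ and $|F(t)|\leq K(1+\sigma^{2})(\E^{\sigma}/2)^{\beta}\E^{-\beta t}$ for $t\geq0$. Since $t\mapsto\E^{\alpha t}$ and $t\mapsto\E^{-\beta t}$ are monotone, comparing the two tail sums with the corresponding integrals yields $h\sum_{k\leq-M-1}|F(kh)|\leq(K/\alpha)\{\frac{1}{2}(1+1/\sin^{2}1)\}^{\alpha}\E^{-\alpha Mh}$ and $h\sum_{k\geq N+1}|F(kh)|\leq(K/\beta)(1+\sigma^{2})(\E^{\sigma}/2)^{\beta}\E^{-\beta Nh}$. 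A short case check on \eqref{eq:Def-MN} (treating $\mu=\alpha$ and $\mu=\beta$ separately, and using $\lceil\cdot\rceil\geq\cdot$) shows $\E^{-\alpha Mh}\leq\E^{-\sqrt{2\pi d\mu n}}$ and $\E^{-\beta Nh}\leq\E^{-\sqrt{2\pi d\mu n}}$, so the truncation error is at most $KC_2\E^{-\sqrt{2\pi d\mu n}}$. Adding the bounds of Steps 2 and 3 gives the asserted inequality; once Step 1 is in place, Steps 2 and 3 are routine.
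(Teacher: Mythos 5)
Your three-step skeleton (split into discretization and truncation errors; bound the discretization error by Stenger's trapezoidal estimate applied to the boundary traces $|F(t\pm\I d)|$; bound the tails by comparing monotone majorants with integrals, then use \eqref{eq:Def-MN} to reduce both exponents to $\mu nh=\sqrt{2\pi d\mu n}$) is exactly the strategy this paper uses for its own Theorems~\ref{thm:New1} and~\ref{thm:New2} (via Theorem~\ref{thm:bound-discretization-error} and the lemmas of Sections~\ref{sec:proofs}--\ref{sec:proofs-2}), and it is the strategy of the cited proof of Theorem~\ref{thm:New} in \cite{Okayama-Hanada}; note that the present paper only quotes Theorem~\ref{thm:New} and does not reprove it. Steps 2 and 3 of your proposal are correct as far as they go, and the bookkeeping with $M$, $N$, $h$ is right.

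The genuine gap is that Step 1 --- which you yourself identify as ``the main obstacle'' --- is not carried out, and it is where essentially all of the content of the theorem lives. To obtain $C_1$ and $C_2$ one needs concrete pointwise inequalities such as bounds for $|\tilde{\psi}'(\zeta)|$, a lower bound for $|4+\tilde{\psi}(\zeta)^2|=|u+1/u|^2$ with $u=\arcsinh(\E^{\zeta})$ on $\domD_d^{-}$, and an upper bound for $|\E^{-\tilde{\psi}(\zeta)}|^{\beta}$ on $\domD_d^{+}$; these are the analogues of the paper's Lemmas in Sections~\ref{subsec:domDdplus}--\ref{subsec:domDd} (e.g.\ \eqref{leq:Lemma45}, \eqref{leq:Lemma46}, \eqref{leq:func-bound-Ddminus}, \eqref{leq:Dd-func-bound}), and each specific constant ($1/\sin^2 1$, $\sigma=1/\arcsinh(1)$, $\sqrt{2}\E^{\sigma}/\cos(d/2)$, $\gamma_d/\arctan(\gamma_d)$) is the output of one such lemma, typically proved by a monotonicity argument or a three-lines-type estimate. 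Writing ``I would derive explicit bounds of the shape $|\tilde{\psi}'(\zeta)|\leq(\text{explicit})$'' postulates the conclusion of this work rather than doing it, so the stated values of $C_1$ and $C_2$ are not established. A minor additional point: your parenthetical attribution of the factor $\arctan(\gamma_d)$ to ``the antiderivative of $1/(4+x^2)$'' is a guess and almost certainly not how it arises; in the analogous computations in Section~\ref{subsec:discretization-error} the corresponding denominators come from integrating the majorant of $|F(x\pm\I d)|$ over $(-\infty,0)$ after the change of variables induced by the map, not from integrating the original integrand's decay profile in the $x$-variable.
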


In Theorem~\ref{thm:New},
the condition~\eqref{leq:f-Dd-plus-original} is
modified to~\eqref{leq:f-Dd-plus},
and the condition~\eqref{leq:f-Dd-minus-original} is
modified to~\eqref{leq:f-Dd-minus}.
The former constitutes the most significant difference,
because $\beta$ in Theorem~\ref{thm:New} can be
two times greater than that in Theorem~\ref{thm:Stenger},
while $\alpha$ remains unchanged.
Owing to the difference, $\mu$ in Theorem~\ref{thm:New}
may be greater than that in Theorem~\ref{thm:Stenger},
which affects the convergence rate
$\Order(\E^{-\sqrt{2\pi d \mu n}})$.

Another difference between
Theorems~\ref{thm:Stenger}
and~\ref{thm:New} lies in the constants
on the right-hand side of the inequalities.
All the constants in Theorem~\ref{thm:New} are explicitly revealed,
and the right-hand side can be computed to provide an error bound.
This paper provides two error bounds
for the improved formula~\eqref{eq:Our-formula}
in the same manner as Theorem~\ref{thm:New}.

\subsection{General error bound for the proposed formula}
\label{subsec:New-result-1}

As a general case, we present the following error bound
for the improved formula~\eqref{eq:Our-formula}.
The proof is given in Section~\ref{sec:proofs}.

\begin{theorem}
\label{thm:New1}
Assume that $f$ is analytic in $\phi(\domD_d)$ with $0<d<\pi$,
and that there exist positive constants $K$, $\alpha$, and $\beta$
such that~\eqref{leq:f-Dd-plus}
holds for all $z\in\phi(\domD_d^{+})$, and~\eqref{leq:f-Dd-minus-original}
holds for all $z\in\phi(\domD_d^{-})$.
Let $\mu = \min\{\alpha,\beta\}$,
let $M$ and $N$ be defined as~\eqref{eq:Def-MN},
and let $h$ be defined as~\eqref{eq:Def-h}.
Then, it holds that
\[
\left|
\int_{-\infty}^{\infty}f(x)\D x
- h\sum_{k=-M}^N f(\phi(kh))\phi'(kh)
\right|
\leq K\left(\frac{2C_3}{1-\E^{-\sqrt{2\pi d \mu}}} + C_4\right)
 \E^{-\sqrt{2 \pi d \mu n}},
\]
where $C_3$ and $C_4$ are constants defined as
\begin{align}
C_3&
=\left(\frac{1}{\alpha+1}+\frac{1}{\alpha}\right)
\left\{\frac{\E c_d}{(1-\log 2)(\E - 1)}\right\}^{\alpha+1}
\frac{1+\{\log(2+c_d)\}^2}{\{\log(2+c_d)\}^2}(1+c_d)^2
+\frac{(1+\lambda^2)c_d}{\beta}
\left(\E^{\lambda} c_d\right)^{\beta},
\label{def:C_3}\\
C_4&=\frac{\E^{1/\pi^3}}{\alpha (1 - \log 2)^{\alpha+1}}
+\frac{1+\lambda^2}{\beta}\left(\E^{\lambda}\right)^{\beta},
\label{def:C_4}
\end{align}
where $c_d = 1/\cos(d/2)$ and $\lambda=1/\log 2$.
\end{theorem}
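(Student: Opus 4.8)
The plan is to follow the error‑decomposition scheme behind Theorem~\ref{thm:New}. Write $F(t)=f(\phi(t))\phi'(t)$; since $\phi$ is a strictly increasing bijection of $\mathbb{R}$ onto itself, $\int_{-\infty}^{\infty}f(x)\,\D x=\int_{-\infty}^{\infty}F(t)\,\D t$, so the error splits as
\[
\left|\int_{-\infty}^{\infty}f(x)\,\D x-h\sum_{k=-M}^{N}F(kh)\right|
\le
\left|\int_{-\infty}^{\infty}F(t)\,\D t-h\sum_{k=-\infty}^{\infty}F(kh)\right|
+h\sum_{k=-\infty}^{-M-1}|F(kh)|
+h\sum_{k=N+1}^{\infty}|F(kh)|.
\]
For the discretization error (first term) I would invoke the classical bound for the trapezoidal rule over the Hardy space on $\domD_d$, namely $\dfrac{\E^{-2\pi d/h}}{1-\E^{-2\pi d/h}}\,\mathcal{N}_1(F,\domD_d)$ with $\mathcal{N}_1(F,\domD_d)=\lim_{\varepsilon\downarrow0}\oint_{\partial\domD_{d-\varepsilon}}|F(\zeta)|\,|\D\zeta|$. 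Because $h=\sqrt{2\pi d/(\mu n)}$ gives $\E^{-2\pi d/h}=\E^{-\sqrt{2\pi d\mu n}}$, and $1-\E^{-\sqrt{2\pi d\mu n}}\ge1-\E^{-\sqrt{2\pi d\mu}}$ for $n\ge1$, it then suffices to show $\mathcal{N}_1(F,\domD_d)\le2KC_3$. (Analyticity of $F$ on $\domD_d$ follows from analyticity of $f$ on $\phi(\domD_d)$ and of $\phi$ on $\domD_\pi\supset\domD_d$ — both $1+\E^{\zeta}$ and $\log(1+\E^{\zeta})$ stay off $(-\infty,0]$ when $|\Im\zeta|<\pi$ — which is precisely why $d$ may now be taken up to $\pi$.)

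To bound $\mathcal{N}_1(F,\domD_d)$, fix $y$ with $|y|<d$, split $\int_{-\infty}^{\infty}|f(\phi(x+\I y))|\,|\phi'(x+\I y)|\,\D x$ at $x=0$, apply~\eqref{leq:f-Dd-plus} on the part with $x\ge0$ and~\eqref{leq:f-Dd-minus-original} on the part with $x<0$, and then let $y\to d$ (justified by first working in $\domD_{d'}$ with $d'<d$). This reduces matters to computable estimates of $w=\log(1+\E^{\zeta})$, of $\phi=w-1/w$, and of $\phi'=\dfrac{\E^{\zeta}}{1+\E^{\zeta}}\bigl(1+w^{-2}\bigr)$. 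Two elementary inequalities do most of the work: $|1+\E^{\zeta}|\ge(1+\E^{\Re\zeta})/c_d$ on $\domD_d$ (which, via $|1+\E^{\zeta}|^{2}=\E^{2\Re\zeta}+2\E^{\Re\zeta}\cos(\Im\zeta)+1$ and $\cos(\Im\zeta)\ge\cos d$, reduces to $2(1-\cos^{2}(d/2))(\cosh(\Re\zeta)-1)\ge0$), and $|w|\ge\log 2$ on $\domD_d^{+}$ (because there $1+\E^{\zeta}$ lies outside the disc $\{|u-1|<1\}$, on which $|\log u|$ attains its minimum $\log 2$ at $u=2$). On $\domD_d^{+}$ these give $\Re\phi(\zeta)\ge\Re\zeta-\log c_d-\lambda$ and $|\phi'(\zeta)|\le(1+\lambda^{2})c_d$, so~\eqref{leq:f-Dd-plus} yields $\int_{0}^{\infty}|F(x+\I y)|\,\D x\le K\,\frac{(1+\lambda^{2})c_d}{\beta}(\E^{\lambda}c_d)^{\beta}$, the second summand of $C_3$. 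On $\domD_d^{-}$, using also the identity $|\phi(\zeta)|^{2}=(|w|-|w|^{-1})^{2}+4\sin^{2}(\arg w)$, one gets a lower bound $|\phi(\zeta)|\ge A_d\,\E^{-\Re\zeta}$ with $A_d=\frac{(1-\log 2)(\E-1)}{\E\,c_d}$ and an upper bound $|\phi'(\zeta)|\le B_d\,(1+\E^{-\Re\zeta})$ with an explicit $B_d=\frac{1+\{\log(2+c_d)\}^{2}}{\{\log(2+c_d)\}^{2}}(1+c_d)^{2}$; then $|F(\zeta)|\le K A_d^{-(\alpha+1)}B_d\,(\E^{(\alpha+1)\Re\zeta}+\E^{\alpha\Re\zeta})$ there, so $\int_{-\infty}^{0}|F(x+\I y)|\,\D x\le K\bigl(\tfrac{1}{\alpha+1}+\tfrac{1}{\alpha}\bigr)A_d^{-(\alpha+1)}B_d$, the first summand of $C_3$. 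Adding the contributions of the two boundary lines $\Im\zeta=\pm d$ gives $\mathcal{N}_1(F,\domD_d)\le2KC_3$.

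For the two truncation sums I would use the sharper real‑line estimates. On $t\ge0$, $\phi(t)\ge t-\lambda$ and $\phi'(t)\le1+\lambda^{2}$, hence $|F(t)|\le K(1+\lambda^{2})\E^{\lambda\beta}\E^{-\beta t}$, and monotonicity of the majorant gives $h\sum_{k=N+1}^{\infty}|F(kh)|\le\frac{K(1+\lambda^{2})}{\beta}(\E^{\lambda})^{\beta}\E^{-\beta Nh}$; since $\beta N\ge\mu n$ and $h=\sqrt{2\pi d/(\mu n)}$, this is $\frac{K(1+\lambda^{2})}{\beta}(\E^{\lambda})^{\beta}\E^{-\sqrt{2\pi d\mu n}}$, the second summand of $C_4$. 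On $t\le0$, $|\phi(t)|\ge(1-\log 2)\E^{-t}$ combined with a sharp estimate of $\phi'(t)/|\phi(t)|^{\alpha+1}$ gives $|F(t)|\le\frac{K\E^{1/\pi^{3}}}{(1-\log 2)^{\alpha+1}}\E^{\alpha t}$, so $h\sum_{k=-\infty}^{-M-1}|F(kh)|\le\frac{K\E^{1/\pi^{3}}}{\alpha(1-\log 2)^{\alpha+1}}\E^{-\alpha Mh}$, and $\alpha M\ge\mu n$ makes this the first summand of $C_4$ times $\E^{-\sqrt{2\pi d\mu n}}$. Combining the three pieces with the prescribed $M$, $N$, $h$ yields the claimed bound.

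The main obstacle I anticipate is the $\domD_d^{-}$ part of the second paragraph: obtaining a clean, fully computable lower bound for $|\phi(\zeta)|=|w-1/w|$ that decays no faster than $\E^{-\Re\zeta}$, uniformly up to $|\Im\zeta|=d$. The difficulty concentrates near $\Re\zeta=0$ — where $w=\log(1+\E^{\zeta})$ is neither $\E^{\zeta}$‑like nor $\zeta$‑like, so the $-1/w$ term governs $\phi$ — and near $|\Im\zeta|=d$ with $d$ close to $\pi$, where $|w|$ can be large on $\domD_d^{-}$; in particular one must verify that $\phi$ never vanishes there, i.e.\ $w\ne\pm1$, which holds since $w=1$ forces $\Re\zeta=\log(\E-1)>0$ and $w=-1$ forces $|\Im\zeta|=\pi$. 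The companion sharp estimate of $\phi'/|\phi|^{\alpha+1}$ on $t\le0$ needed for $C_4$ is of the same nature. Once these pointwise bounds are established, the remainder is a routine accounting of the constants $C_3,C_4$ together with the standard $\domD_{d'}\uparrow\domD_d$ limiting argument.
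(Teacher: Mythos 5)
Your proposal is correct and follows essentially the same route as the paper: the split into discretization and truncation errors, Stenger's $\mathbf{H}^1(\domD_d)$ bound reducing everything to $\mathcal{N}_1(F,d)\leq 2KC_3$, the half-line splitting at $\Re\zeta=0$ using the factorizations $\phi=w-1/w$ with $w=\log(1+\E^{\zeta})$, and integral comparison for the tails — and all of your intermediate constants ($A_d$, $B_d$, the $(\E^{\lambda}c_d)^{\beta}$ factor, $\E^{1/\pi^3}$) coincide with the paper's. The pointwise estimates you flag as the remaining obstacles ($|w|\geq\log 2$ on $\overline{\domD_{\pi}^{+}}$ via the $\arccos(t/2)\geq\sqrt{2-t}$ inequality, $|w-1|\geq 1-\log 2$ on $\domD_{\pi}^{-}$, and the three-lines-lemma bound for $|(1+w^2)/((1+\E^{-\zeta})^2w^2)|$) are exactly the auxiliary lemmas the paper proves, so nothing in your outline would fail.
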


The crucial difference between
Theorems~\ref{thm:New} and~\ref{thm:New1} is the upper bound of $d$;
$d<\pi/2$ in Theorem~\ref{thm:New}, whereas
$d<\pi$ in Theorem~\ref{thm:New1}.
This implies that in the new approximation~\eqref{eq:Our-formula}, $d$
may be greater than $d$ in the previous
approximation~\eqref{eq:Okayama-Hanada-formula}.
In this case, the convergence rate
$\Order(\E^{-\sqrt{2\pi d \mu n}})$ is improved
(note that $\mu$ is not changed between the two theorems).

This difference in the range of $d$ originates from the conformal maps
$\tilde{\psi}$ and $\phi$.
By observing the derivatives of the functions
\[
 \tilde{\psi}'(\zeta)
= \frac{1 + \arcsinh^2(\E^\zeta)}{\sqrt{1+\E^{-2\zeta}}\arcsinh^2(\E^\zeta)},
\quad
\phi'(\zeta)
= \frac{1 + \{\log(1+\E^{\zeta})\}^2}{(1+\E^{-\zeta})\{\log(1+\E^{\zeta})\}^2},
\]
we see that $\tilde{\psi}'(\zeta)$
is not analytic at $\zeta=\pm\I(\pi/2)$,
and $\phi'(\zeta)$ is not analytic at $\zeta=\pm\I\pi$.
Accordingly, $f(\tilde{\psi}(\zeta))\tilde{\psi}'(\zeta)$ is
analytic at most $\domD_{\pi/2}$, and
$f(\phi(\zeta))\phi'(\zeta)$ is analytic at most $\domD_{\pi}$.
Therefore, the range of $d$ is $0<d<\pi/2$ in Theorem~\ref{thm:New}
and $0 < d < \pi$ in Theorem~\ref{thm:New1}.

\subsection{Special error bound for the proposed formula}
\label{subsec:New-result-2}

As a special case, restricting the range of $d$ to $0<d<(1+\pi)/2$,
we present the following error bound
for the improved formula~\eqref{eq:Our-formula}.
The proof is given in Section~\ref{sec:proofs-2}.

\begin{theorem}
\label{thm:New2}
Assume that $f$ is analytic in $\phi(\domD_d)$ with $0<d<(1+\pi)/2$,
and that there exist positive constants $K$, $\alpha$, and $\beta$
such that~\eqref{leq:f-Dd-plus}
holds for all $z\in\phi(\domD_d^{+})$, and
\begin{equation}
 |f(z)|\leq K \frac{1}{|4+z^2|^{1/2}|z|^{\alpha}}
\label{leq:f-Dd-minus-new}
\end{equation}
holds for all $z\in\phi(\domD_d^{-})$.
Let $\mu = \min\{\alpha,\beta\}$,
let $M$ and $N$ be defined as~\eqref{eq:Def-MN},
and let $h$ be defined as~\eqref{eq:Def-h}.
Then, it holds that
\[
\left|
\int_{-\infty}^{\infty}f(x)\D x
- h\sum_{k=-M}^N f(\phi(kh))\phi'(kh)
\right|
\leq K\left(\frac{2C_5}{1-\E^{-\sqrt{2\pi d \mu}}} + C_6\right)
 \E^{-\sqrt{2 \pi d \mu n}},
\]
where $C_5$ and $C_6$ are constants defined as
\begin{align}
C_5&
=\frac{1}{\alpha}\left\{\frac{\E c_d}{(1-\log 2)(\E - 1)}\right\}^{\alpha}
\frac{1 + c_d}{\log(2+c_d)}
+\frac{(1+\lambda^2)c_d}{\beta}
\left(\E^{\lambda} c_d\right)^{\beta},
\label{def:C_5}\\
C_6&=\frac{1}{\alpha (1 - \log 2)^{\alpha}}
+\frac{1+\lambda^2}{\beta}\left(\E^{\lambda}\right)^{\beta},
\label{def:C_6}
\end{align}
where $c_d = 1/\cos(d/2)$ and $\lambda=1/\log 2$.
\end{theorem}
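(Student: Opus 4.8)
The plan is to follow the standard Stenger-type decomposition of the trapezoidal-formula error into a \emph{discretization error} and a \emph{truncation error}, and then to estimate each piece using the specific structure of the new conformal map $\phi$ together with the sharpened decay hypothesis~\eqref{leq:f-Dd-minus-new}. Writing $F(\zeta) = f(\phi(\zeta))\phi'(\zeta)$, the hypotheses guarantee that $F$ is analytic in $\domD_d$ with $0<d<(1+\pi)/2$, so the classical result (cf.\ the references \cite{stenger93:_numer,Stenger} and the treatment in \cite{Okayama-Hanada}) bounds the full error of the infinite trapezoidal sum by a contour integral of $|F|$ along $\Im\zeta = \pm d$, while the truncation error is controlled by $\int_{-\infty}^{-Mh}|F(x)|\,\D x + \int_{Nh}^{\infty}|F(x)|\,\D x$. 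Concretely I would show
\[
\left|\int_{-\infty}^{\infty}F(x)\,\D x - h\sum_{k=-\infty}^{\infty}F(kh)\right|
\le \frac{2}{1-\E^{-2\pi d/h}}\int_{-\infty}^{\infty}\bigl(|F(x+\I d)| + |F(x-\I d)|\bigr)\,\D x
\]
(this yields the $2C_5/(1-\E^{-\sqrt{2\pi d\mu}})$ term after inserting $h$ from~\eqref{eq:Def-h}), and separately bound the tails by $C_6\,\E^{-\sqrt{2\pi d\mu n}}$ using the choice of $M,N$ in~\eqref{eq:Def-MN}.

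The technical heart is the pointwise estimation of $|F|$ on the translated lines, split along the real part into the two halves $\domD_d^{-}$ and $\domD_d^{+}$. For $z\in\phi(\domD_d^{+})$ I use~\eqref{leq:f-Dd-plus}, so I need an upper bound on $|\phi'(\zeta)|$ and a lower bound on $\Re\phi(\zeta)$ when $\Re\zeta\ge 0$, $|\Im\zeta|\le d$; this is where the factor $(1+\lambda^2)$, the exponent $\lambda = 1/\log 2$, and the $c_d$ factors in the second summands of $C_5,C_6$ come from — $\lambda$ is the analogue of Okayama--Hanada's $\sigma = 1/\arcsinh(1)$ but with $\arcsinh$ replaced by $\log(1+\cdot)$, reflecting $\log(1+\E^0) = \log 2$. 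For $z\in\phi(\domD_d^{-})$ I use~\eqref{leq:f-Dd-minus-new}: here I must bound $|\phi'(\zeta)|$ from above and bound $|4+\phi(\zeta)^2|^{1/2}|\phi(\zeta)|^{\alpha}$ from below on the strip with $\Re\zeta<0$. The quantity $1-\log 2$ appears because near $\zeta\to-\infty$ along a horizontal line the map behaves like $\phi(\zeta)\sim 2\sinh(\log\log(1+\E^\zeta))$ and $\log(1+\E^\zeta)\to 0$, so $\log\log(1+\E^\zeta)\to-\infty$ and $\phi(\zeta)\to 0$ like $-1/\log(1+\E^\zeta)$; the constant $\E c_d/((1-\log 2)(\E-1))$ in~\eqref{def:C_5}–\eqref{def:C_6} is the uniform bound on the relevant ratio over the restricted strip $|\Im\zeta|<(1+\pi)/2$. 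I would establish these via elementary but careful inequalities for $\log(1+w)$ and $\cosh$, $\sinh$ of complex arguments, isolating the real and imaginary parts of $\log(1+\E^\zeta)$; the restriction $d<(1+\pi)/2$ (rather than the full $d<\pi$ of Theorem~\ref{thm:New1}) is exactly what keeps $\Re\log(1+\E^\zeta)$ away from the problematic region and lets the single factor $|4+z^2|^{1/2}|z|^{\alpha}$ — rather than the cruder $|z|^{\alpha+1}$ of~\eqref{leq:f-Dd-minus-original} — be matched cleanly against $|\phi'|$ near $\zeta\to-\infty$, producing the simpler constants $C_5,C_6$ (one term in $C_3$, not two).

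For the tail/truncation estimate I evaluate $F$ on the real line: as $x\to+\infty$, $|f(\phi(x))|\le K\E^{-\beta\phi(x)}$ with $\phi(x)$ growing, and $\phi'(x)$ at most polynomially large, so $\int_{Nh}^\infty|F|\,\D x$ decays at the required rate once $N$ is chosen by~\eqref{eq:Def-MN}; as $x\to-\infty$, $|F(x)|\le K\,|4+\phi(x)^2|^{-1/2}|\phi(x)|^{-\alpha}\,|\phi'(x)|$, and since $\phi(x)\to 0$ with $\phi(x)\asymp -1/\log(1+\E^x)$ and $\phi'(x)\asymp$ the same order over $\log^2$, the integrand behaves like a negative power times $\E^{\alpha x}$-type factor, so $\int_{-\infty}^{-Mh}|F|\,\D x$ also decays as $\E^{-\sqrt{2\pi d\mu n}}$ with the constant $1/((1-\log 2)^\alpha\alpha)$ showing up. The main obstacle I anticipate is obtaining \emph{sharp, uniform} bounds on $|\phi(\zeta)|$ and $|\phi'(\zeta)|$ over the full restricted strip — in particular a clean lower bound for $|4+\phi(\zeta)^2|^{1/2}|\phi(\zeta)|^\alpha$ that does not blow up near the boundary $\Im\zeta=\pm d$ and near $\Re\zeta\to-\infty$ simultaneously; I expect this to hinge on writing $\log(1+\E^\zeta) = r\E^{\I\theta}$ and proving that $|\arg\log(1+\E^\zeta)|$ stays bounded away from $\pm\pi/2$ precisely under $d<(1+\pi)/2$, after which the $\sinh$/$\cosh$ magnitudes and the $|4+\cdot^2|$ factor are routine. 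Once these lemmas are in place, summing the two half-strip contributions and the two tail contributions, and substituting $h$ from~\eqref{eq:Def-h}, gives the stated bound with constants $C_5,C_6$ as in~\eqref{def:C_5}–\eqref{def:C_6}.
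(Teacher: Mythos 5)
Your architecture coincides with the paper's: split the error into a discretization part, bounded via the contour integral of $|F|$ along $\Im\zeta=\pm d$ (Stenger's theorem with the norm $\mathcal{N}_1(F,d)$), and a truncation part bounded by the real-line tails with $M,N$ from~\eqref{eq:Def-MN}; then estimate $|F|$ separately on the two half-strips. What is missing is the single idea that actually produces the first terms of $C_5$ and $C_6$ and justifies replacing~\eqref{leq:f-Dd-minus-original} by~\eqref{leq:f-Dd-minus-new}. Writing $L=\log(1+\E^{\zeta})$ one has $\phi(\zeta)=L-L^{-1}$, hence $4+\{\phi(\zeta)\}^2=(L+L^{-1})^2$ and
\[
\frac{|\phi'(\zeta)|}{|4+\{\phi(\zeta)\}^2|^{1/2}}
=\frac{|1+L^2|}{|1+\E^{-\zeta}|\,|L|^2}\cdot\frac{|L|}{|1+L^2|}
=\frac{1}{|(1+\E^{-\zeta})\,L|},
\]
an \emph{exact} cancellation of the factor $|1+L^2|$. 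The paper then proves $|1/((1+\E^{-\zeta})L)|\le(1+c_d)/\log(2+c_d)$ on $\domD_d$ via the three-lines lemma (a power-series estimate of $(1-\E^{-\xi})/\xi$ for $|\xi|\le\log(2+c_d)$, and $\Re L\ge\log(\cos(d/2))$ otherwise), together with $1/((1+\E^{-x})\log(1+\E^{x}))\le 1$ on $\mathbb{R}$; these are precisely the first terms of~\eqref{def:C_5} and~\eqref{def:C_6}. Your plan to bound $|\phi'|$ from above and $|4+\phi^2|^{1/2}|\phi|^{\alpha}$ from below as separate ingredients cannot recover this: near $\zeta_0=\log(2\sin(1/2))+\I(1+\pi)/2$, where $L=\I$, the factor $|1+L^2|$ is small, so $1/|4+\phi^2|^{1/2}$ is large while $|\phi'|$ is small, and only their product is controlled; separate uniform bounds degenerate as $d\to(1+\pi)/2$ and do not yield the stated constants.

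Furthermore, the lemma you say the argument hinges on --- that $|\arg\log(1+\E^{\zeta})|$ stays bounded away from $\pi/2$ precisely when $d<(1+\pi)/2$ --- is false: for any $y$ with $\pi/2<|y|<d$, choosing $\E^{x}=-2\cos y$ gives $|1+\E^{x+\I y}|=1$, so $L$ is purely imaginary already once $d>\pi/2$. The true role of the threshold $(1+\pi)/2$ is only that $4+\{\phi(\zeta)\}^2$ vanishes at $\zeta=\log(2\sin(1/2))\pm\I(1+\pi)/2$ (where $L=\pm\I$); the estimates themselves are carried out for all $d<\pi$. Two smaller corrections: $\phi(x)\to-\infty$ (not $0$) as $x\to-\infty$, since $\phi\sim-1/L$ with $L\to 0^{+}$, and the tail bound there comes from $\phi'(x)/(|4+\phi(x)^2|^{1/2}|\phi(x)|^{\alpha})\le\E^{\alpha x}/(1-\log 2)^{\alpha}$; and on $\domD_d^{+}$ the clean route is the identity $\E^{-\phi(\zeta)}=\E^{1/L}/(1+\E^{\zeta})$ with $|\E^{1/L}|\le\E^{1/\log 2}$, which is where $\lambda=1/\log 2$ comes from. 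With the cancellation above supplied, the remainder of your outline does track the paper's proof.
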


In this theorem, the upper bound of $d$ is $(1 + \pi)/2$,
which is smaller than that in Theorem~\ref{thm:New1} ($\pi$).
This is because the condition~\eqref{leq:f-Dd-minus-original}
is changed to~\eqref{leq:f-Dd-minus-new},
where $4+\{\phi(\zeta)\}^2$ (put $z=\phi(\zeta)$) has zero points at
$\zeta=\log(2\sin(1/2))\pm \I(1+\pi)/2$.
However,
the constants $C_5$ and $C_6$ are considerably smaller
than $C_3$ and $C_4$, respectively (comparing the first term).
Therefore, Theorem~\ref{thm:New2} is useful for attaining a sharp error bound
rather than a large upper bound of $d$.
It must be noted here that
$(1 + \pi)/2$ is still greater than $\pi/2$
in Theorems~\ref{thm:Stenger} and~\ref{thm:New}.

\section{Numerical examples}
\label{sec:numer}

This section presents the numerical results obtained in this study.
All the programs were written in C language with double-precision floating-point
arithmetic.
The following three integrals are considered:
\begin{align}
\int_{-\infty}^{\infty}\left\{\frac{1}{\sqrt{1 + (x/2)^2}+ 1-(x/2)}\right\}^2
\exp\left(-\frac{x}{2}-\sqrt{1+\left(\frac{x}{2}\right)^2}\right)\D x
&=3 - 4\E E_1(1),
\label{eq:example1}\\
\int_{-\infty}^{\infty}\frac{1}{4+x^2}
\exp\left(-\frac{x}{2}-\sqrt{1+\left(\frac{x}{2}\right)^2}\right)\D x
&=\Ci(1)\sin 1 - \si(1)\cos 1,
\label{eq:example3}\\
\int_{-\infty}^{\infty} \frac{1}{2}\left(1+\frac{x}{\sqrt{4+x^2}}\right)\frac{1}{1+\E^{(\pi/2)x}}\D x
&=1.136877446810281077257\cdots,
\label{eq:example2}
\end{align}
where $E_1(x)$ is the exponential integral defined by
$E_1(x)=\int_1^{\infty}(\E^{-tx}/t)\D t$,
$\Ci(x)$ is the cosine integral defined by
$\Ci(x)=-\int_x^{\infty}(\cos t/t)\D t$,
and $\si(x)$ is the sine integral defined by
$\si(x)=-\int_x^{\infty}(\sin t/t)\D t$.
The third integral~\eqref{eq:example2} is taken from
the previous study~\cite{Okayama-Hanada}.

\begin{table}[htbp]
\caption{Parameters for the integral~\eqref{eq:example1}.}
\label{tbl:example1}
\centering
\begin{tabular}{lcccc}
\hline
                         & $\alpha$ & $\beta$ & $d$  & $K$ \\
\hline
Theorem~\ref{thm:Stenger}& $1$      & $1/2$   & $3/2$&   \\
Theorem~\ref{thm:New}    & $1$      &   $1$   & $3/2$& $1$ \\
Theorem~\ref{thm:New1}   & $1$      &   $1$   & $3$& $78$\\
Theorem~\ref{thm:New2}   & $1$      &   $1$   & $2$& $6/5$\\
\hline
\end{tabular}
\end{table}

\begin{table}[htbp]
\caption{Parameters for the integral~\eqref{eq:example3}.}
\label{tbl:example3}
\centering
\begin{tabular}{lcccc}
\hline
                         & $\alpha$ & $\beta$ & $d$  & $K$ \\
\hline
Theorem~\ref{thm:Stenger}& $1$      & $1/2$   & $3/2$&   \\
Theorem~\ref{thm:New}    & $1$      &   $1$   & $3/2$& $16/9$ \\
Theorem~\ref{thm:New1}   & $1$      &   $1$   & $2$& $215$\\
Theorem~\ref{thm:New2}   & $1$      &   $1$   & $2$& $39$\\
\hline
\end{tabular}
\end{table}

\begin{table}[htbp]
\caption{Parameters for the integral~\eqref{eq:example2}.}
\label{tbl:example2}
\centering
\begin{tabular}{lcccc}
\hline
                         & $\alpha$ & $\beta$ & $d$  & $K$ \\
\hline
Theorem~\ref{thm:Stenger}& $1$      & $\pi/4$   & $3/2$&    \\
Theorem~\ref{thm:New}    & $1$      & $\pi/2$   & $3/2$& $12$ \\
Theorem~\ref{thm:New1}   & $1$      & $\pi/2$   & $3/2$& $9$\\
Theorem~\ref{thm:New2}   & $1$      & $\pi/2$   & $3/2$& $9/2$\\
\hline
\end{tabular}
\end{table}

The integrand in~\eqref{eq:example1}
satisfies the assumptions of Theorems~\ref{thm:Stenger},
\ref{thm:New}, \ref{thm:New1}, and \ref{thm:New2}
with the parameters shown in Table~\ref{tbl:example1}.
In Theorem~\ref{thm:Stenger}, $K$ is not investigated
since $K$ is not used for computation.
In Theorems~\ref{thm:Stenger} and~\ref{thm:New},
$d$ is taken as $d=3/2$ since $d<\pi/2$.
In Theorem~\ref{thm:New1},
$d$ is taken as $d=3$ since $d<\pi$.
In Theorem~\ref{thm:New2},
$d$ is taken as $d=2$ since $d<(1+\pi)/2$.
The results are shown in Figure~\ref{fig:example1}.
As seen in the graph,
the proposed formula with $d=3$
shows the fastest convergence as compared to the others.
However, the corresponding error bound by Theorem~\ref{thm:New1}
is relatively large, because the constant $C_3$ in~\eqref{def:C_3} is large.
In contrast, Theorem~\ref{thm:New2} produces
a sharp error for the proposed formula with $d=2$,
although the convergence rate is slightly worse than that from Theorem~\ref{thm:New1}.

The integrand in~\eqref{eq:example3}
satisfies the assumptions of Theorems~\ref{thm:Stenger},
\ref{thm:New}, \ref{thm:New1}, and \ref{thm:New2}
with the parameters shown in Table~\ref{tbl:example3}.
In this case, $d$ must satisfy $d<(1+\pi)/2$
in Theorem~\ref{thm:New1},
due to the singular points of $1/(4+x^2)$.
The results are shown in Figure~\ref{fig:example3}.
As seen in the graph,
the proposed formula with $d=2$
shows the fastest convergence as compared to the others.
Note that in this example, Theorem~\ref{thm:New1} and Theorem~\ref{thm:New2}
have the same $d$ values,
and thus, their approximation formulas are exactly the same.
As for the error bound,
Theorem~\ref{thm:New2} produces a sharper error
than Theorem~\ref{thm:New1} in this case as well.

The integrand in~\eqref{eq:example2}
satisfies the assumptions of Theorems~\ref{thm:Stenger},
\ref{thm:New}, \ref{thm:New1}, and \ref{thm:New2}
with the parameters shown in Table~\ref{tbl:example2}.
In this case, $d$ must satisfy $d<\pi/2$
in both Theorems~\ref{thm:New1} and~\ref{thm:New2},
due to the singular points of $1/(1+\E^{(\pi/2)x})$.
The results are shown in Figure~\ref{fig:example2}.
As seen in the graph,
all formulas show a similar convergence rate,
mainly because all formulas use the same value of $d$.
Approximation formulas of Theorem~\ref{thm:New1} and Theorem~\ref{thm:New2}
are exactly the same,
but Theorem~\ref{thm:New2} produces a sharper error
than Theorem~\ref{thm:New1} in this case as well.

\begin{figure}[htbp]
\centering
\input{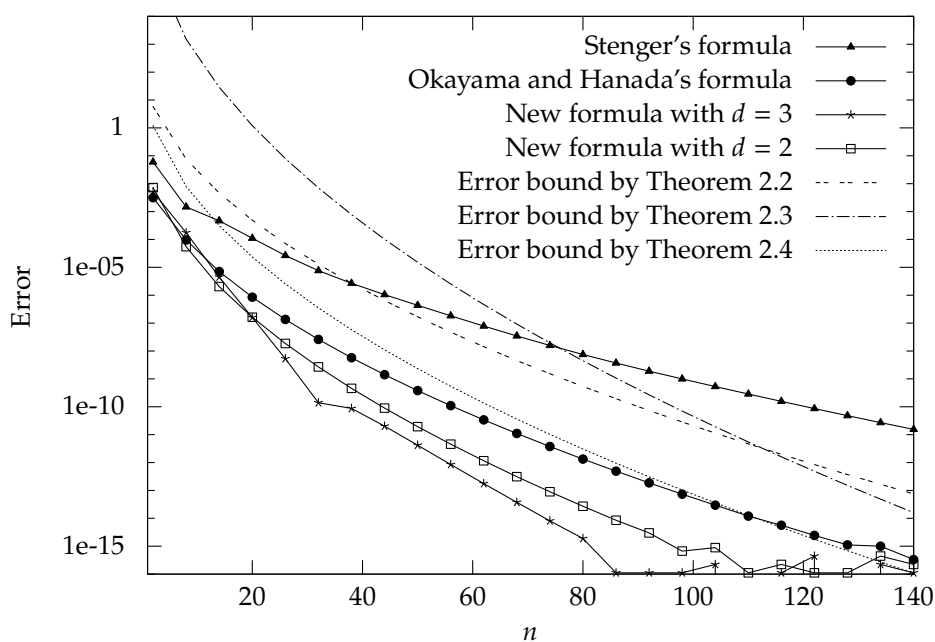}
\caption{Numerical results for~\eqref{eq:example1}.}
\label{fig:example1}
\end{figure}
\begin{figure}[htbp]
\centering
\input{example3.tex}
\caption{Numerical results for~\eqref{eq:example3}.}
\label{fig:example3}
\end{figure}
\begin{figure}[htbp]
\centering
\input{example2.tex}
\caption{Numerical results for~\eqref{eq:example2}.}
\label{fig:example2}
\end{figure}

\section{Proofs for Theorem~\ref{thm:New1}}
\label{sec:proofs}

This section presents the proof of Theorem~\ref{thm:New1}.
It is organized as follows.
In Section~\ref{subsec:sketch-proof},
the task is decomposed into two lemmas:
Lemmas~\ref{lem:bound-None} and~\ref{lem:bound-truncation-error}.
To prove these lemmas,
useful inequalities are presented in
Sections~\ref{subsec:real-ineq}, \ref{subsec:domDdplus}, \ref{subsec:domDdminus},
and~\ref{subsec:domDd}.
Following this,
Lemma~\ref{lem:bound-None} is proved in Section~\ref{subsec:discretization-error},
and Lemma~\ref{lem:bound-truncation-error} is proved in Section~\ref{subsec:truncation-error}.

\subsection{Sketch of the proof}
\label{subsec:sketch-proof}

Let $F(t)=f(\phi(t))\phi'(t)$.
The main strategy in the proof of Theorem~\ref{thm:New1} is
to split the error into two terms as follows:
\begin{align}
\left|
\int_{-\infty}^{\infty}f(x)\D x
- h\sum_{k=-M}^N f(\phi(kh))\phi'(kh)
\right|
&=\left|
\int_{-\infty}^{\infty} F(t)\D t
- h\sum_{k=-M}^N F(kh)
\right|\nonumber\\
&\leq \left|
\int_{-\infty}^{\infty}F(t)\D t
- h\sum_{k=-\infty}^{\infty} F(kh)
\right|
+
\left|
h\sum_{k=-\infty}^{-M-1} F(kh)
+h\sum_{k=N+1}^{\infty} F(kh)
\right|.
\label{eq:decompose-disc-trun}
\end{align}
The first and second terms are called the discretization error and
truncation error, respectively.
The following function space is important for bounding the discretization error.

\begin{definition}
\label{Def:Hone}
Let $\domD_d(\epsilon)$ be a rectangular domain defined
for $0<\epsilon<1$ by
\[
\domD_d(\epsilon)
= \{\zeta\in\mathbb{C}:|\Re\zeta|<1/\epsilon,\, |\Im\zeta|<d(1-\epsilon)\}.
\]
Then, $\mathbf{H}^1(\domD_d)$ denotes the family of all functions $F$
that are analytic in $\domD_d$ such that the norm $\mathcal{N}_1(F,d)$ is finite, where
\[
\mathcal{N}_1(F,d)
=\lim_{\epsilon\to 0}\oint_{\partial \domD_d(\epsilon)} |F(\zeta)||\D \zeta|.
\]
\end{definition}

For functions belonging to this function space,
the discretization error is estimated as follows.

\begin{theorem}[Stenger~{\cite[Theorem~3.2.1]{stenger93:_numer}}]
\label{thm:Stenger-disc}
Let $F\in\mathbf{H}^1(\domD_d)$. Then,
\[
\left|
\int_{-\infty}^{\infty}F(x)\D x - h\sum_{k=-\infty}^{\infty}F(kh)
\right|
\leq\frac{\mathcal{N}_1(F,d)}{1-\E^{-2\pi d/h}}\E^{-2\pi d/h}.
\]
\label{thm:bound-discretization-error}
\end{theorem}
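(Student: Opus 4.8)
This is the classical trapezoidal-rule error bound for the Hardy-type space $\mathbf H^1(\domD_d)$, and the plan is to prove it by a residue calculation on the strip $\domD_d$ (it is in fact precisely \cite[Theorem~3.2.1]{stenger93:_numer}, so one may also simply invoke it). Fix $h>0$ and introduce the meromorphic function
\[
G(\zeta)=\frac{F(\zeta)}{1-\E^{-2\pi\I\zeta/h}},
\]
whose only singularities are simple poles at $\zeta=kh$, $k\in\mathbb{Z}$, with $\operatorname{Res}_{\zeta=kh}G=(h/(2\pi\I))F(kh)$. First I would integrate $G$ counterclockwise around the rectangle with corners $\pm L\pm\I c$, where $0<c<d$ is fixed and $L=(m+\tfrac12)h$ with $m$ a large positive integer so that no pole lies on the contour; the residue theorem then gives $\oint G\,\D\zeta=h\sum_{|k|\le m}F(kh)$.

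Next I would estimate the four sides. On the vertical sides $\Re\zeta=\pm L$ one computes $|1-\E^{-2\pi\I\zeta/h}|=1+\E^{2\pi\Im\zeta/h}\ge1$, so $|G|\le|F|$ there and these contributions vanish as $m\to\infty$ by the decay of functions in $\mathbf H^1(\domD_d)$. On the lower side $\Im\zeta=-c$ the factor $\E^{-2\pi\I\zeta/h}$ has modulus $\E^{-2\pi c/h}<1$, so $1/(1-\E^{-2\pi\I\zeta/h})=\sum_{j\ge0}\E^{-2\pi\I j\zeta/h}$; the term $j=0$ contributes $\int_{-\infty}^{\infty}F(x-\I c)\,\D x$, which equals $\int_{-\infty}^{\infty}F(x)\,\D x$ by Cauchy's theorem, while each $j\ge1$ term is bounded in modulus by $\E^{-2\pi jc/h}\int_{-\infty}^{\infty}|F(x-\I c)|\,\D x$. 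On the upper side $\Im\zeta=c$ the same factor has modulus $\E^{2\pi c/h}>1$, so $1/(1-\E^{-2\pi\I\zeta/h})=-\sum_{j\ge1}\E^{2\pi\I j\zeta/h}$ contains only exponentially small terms, each bounded (after accounting for the reversed orientation) by $\E^{-2\pi jc/h}\int_{-\infty}^{\infty}|F(x+\I c)|\,\D x$.

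Collecting these contributions, letting $m\to\infty$, and summing the geometric series in $j$ yields, for every $0<c<d$,
\[
\left|h\sum_{k=-\infty}^{\infty}F(kh)-\int_{-\infty}^{\infty}F(x)\,\D x\right|
\le\frac{\E^{-2\pi c/h}}{1-\E^{-2\pi c/h}}\left(\int_{-\infty}^{\infty}|F(x-\I c)|\,\D x+\int_{-\infty}^{\infty}|F(x+\I c)|\,\D x\right).
\]
Since the left-hand side is independent of $c$, I would finally let $c\uparrow d$: the right-hand side converges to $\frac{\E^{-2\pi d/h}}{1-\E^{-2\pi d/h}}\mathcal N_1(F,d)$, because the sum of the two horizontal boundary integrals tends to $\mathcal N_1(F,d)$ — the vertical sides of $\partial\domD_d(\epsilon)$ dropping out as $\epsilon\to0$ by Definition~\ref{Def:Hone} — and the asserted inequality follows.

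The residue computation and the geometric-series summation are routine; the delicate point is the bookkeeping of the $\mathbf H^1(\domD_d)$ hypothesis, used to (i) kill the vertical-side contributions as $L\to\infty$, (ii) justify the Cauchy-theorem shift of $\int F$ from the real axis to $\Im\zeta=-c$, and (iii) identify $\lim_{c\uparrow d}\bigl(\int|F(\cdot-\I c)|\,\D x+\int|F(\cdot+\I c)|\,\D x\bigr)$ with $\mathcal N_1(F,d)$; each of these rests on the sub-mean-value inequality for analytic functions together with the finiteness of the perimeter integral in Definition~\ref{Def:Hone}. An equivalent route is Poisson summation, $h\sum_k F(kh)-\int F=\sum_{n\neq0}\hat F(2\pi n/h)$, followed by shifting the Fourier-integral contours toward the boundary lines $\Im\zeta=\mp d$ according to the sign of $n$, which reproduces the same geometric series; I would present whichever is shorter.
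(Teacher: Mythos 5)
Your argument is correct in outline and is precisely the classical residue-calculus proof of this bound (contour integration of $F(\zeta)/(1-\E^{-2\pi\I\zeta/h})$ around a rectangle, geometric-series expansion on the horizontal sides, then $c\uparrow d$), which is the proof given in the cited source; the paper itself does not prove this theorem but imports it directly from Stenger's monograph as Theorem~3.2.1. The delicate points you flag — vanishing of the vertical-side contributions, the Cauchy shift of $\int F$, and identifying the limit of the two horizontal integrals with $\mathcal{N}_1(F,d)$ — are exactly the places where the $\mathbf{H}^1(\domD_d)$ hypothesis is consumed, and are handled in the reference as you describe.
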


In this paper, we show the following lemma,
which completes estimation of the discretization error.
The proof is given in Section~\ref{subsec:discretization-error}.

\begin{lemma}
\label{lem:bound-None}
Let the assumptions made in Theorem~\ref{thm:New1}
be fulfilled.
Then, the function $F(\zeta)=f(\phi(\zeta))\phi'(\zeta)$
belongs to $\mathbf{H}^1(\domD_d)$, and $\mathcal{N}_1(F,d)$ is bounded as
\[
 \mathcal{N}_1(F,d)
\leq
2 K C_3,
\]
where $C_3$ is a constant defined as~\eqref{def:C_3}.
\end{lemma}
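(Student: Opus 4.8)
The plan is to carry out the proof in three stages: establish that $F$ is analytic in $\domD_d$; reduce $\mathcal{N}_1(F,d)$ to two integrals over the horizontal lines $\Im\zeta=\pm d$; and bound each of those integrals by $KC_3$. For the first stage I would rewrite the conformal map using the identity $2\sinh(\log w)=w-1/w$ (valid for every $w\neq 0$, irrespective of the branch of $\log w$), which gives
\[
\phi(\zeta)=g(\zeta)-\frac{1}{g(\zeta)},\qquad g(\zeta)=\log(1+\E^{\zeta}).
\]
For $\zeta\in\domD_{\pi}$ the number $\E^{\zeta}$ is never a real number $\leq -1$ (that would force $\Im\zeta\in\pi+2\pi\mathbb{Z}$), so $1+\E^{\zeta}$ avoids the branch cut $(-\infty,0]$ of the principal logarithm, and thus $g$ is analytic on $\domD_{\pi}$; moreover $g(\zeta)=0$ would force $\E^{\zeta}=0$, which is impossible, so $1/g$ is analytic there as well. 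Hence $\phi=g-1/g$ is analytic on $\domD_{\pi}\supseteq\domD_{d}$, and since $f$ is analytic on $\phi(\domD_{d})$ by hypothesis, $F=(f\circ\phi)\cdot\phi'$ is analytic on $\domD_{d}$.

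For the second stage I would write $\partial\domD_d(\epsilon)$ as four line segments and show that the contributions of the two vertical segments $\Re\zeta=\pm1/\epsilon$ tend to $0$ as $\epsilon\to0$. From $g(\zeta)=\zeta+\E^{-\zeta}+\Order(\E^{-2\zeta})$ one gets $\phi(\zeta)=\zeta+\Order(\E^{-\zeta})$ and $\phi'(\zeta)\to1$ as $\Re\zeta\to+\infty$, so $|\E^{-\phi(\zeta)}|^{\beta}=\E^{-\beta\Re\phi(\zeta)}$ decays exponentially while $|\phi'(\zeta)|$ stays bounded; from $g(\zeta)=\E^{\zeta}+\Order(\E^{2\zeta})$ one gets $\phi(\zeta)=-\E^{-\zeta}+\Order(1)$ and $|\phi'(\zeta)|=\Order(\E^{-\Re\zeta})$ as $\Re\zeta\to-\infty$, so $|\phi'(\zeta)|/|\phi(\zeta)|^{\alpha+1}=\Order(\E^{\alpha\Re\zeta})$ decays exponentially; in both cases, since the segment has length at most $2d$, the integral of $|F|$ over it vanishes. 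Combined with the finiteness established in the third stage, this yields
\[
\mathcal{N}_1(F,d)=\int_{-\infty}^{\infty}|F(x+\I d)|\,\D x+\int_{-\infty}^{\infty}|F(x-\I d)|\,\D x,
\]
which in particular shows $F\in\mathbf{H}^1(\domD_d)$.

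For the third stage I would bound each of the two line integrals by $KC_3$ (the two cases $\Im\zeta=\pm d$ being identical, as $\domD_d^{\pm}$ and the hypotheses on $f$ are invariant under $\zeta\mapsto\bar\zeta$), splitting the integral at $\Re\zeta=0$. On $\Re\zeta\geq0$, i.e.\ $\zeta\in\overline{\domD_d^{+}}$, assumption~\eqref{leq:f-Dd-plus} gives $|F(\zeta)|\leq K\E^{-\beta\Re\phi(\zeta)}|\phi'(\zeta)|$; using pointwise estimates from Section~\ref{subsec:domDdplus} — schematically, $|\phi'(x+\I d)|$ is bounded by a constant multiple of $\phi'(x)$ and $\Re\phi(x+\I d)$ from below by $\phi(x)$ minus a constant — the integral reduces to $\int_0^{\infty}\E^{-\beta\phi(x)}\phi'(x)\,\D x=\E^{-\beta\phi(0)}/\beta$ (here $\phi$ is real and strictly increasing on $[0,\infty)$ with $\phi(\infty)=+\infty$ and $\phi(0)=\log2-\lambda$, so that $\E^{-\beta\phi(0)}$ combines with the constant factor to yield $(\E^{\lambda}c_d)^{\beta}$), which produces $K$ times the second term of $C_3$. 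On $\Re\zeta<0$, i.e.\ $\zeta\in\domD_d^{-}$, assumption~\eqref{leq:f-Dd-minus-original} gives $|F(\zeta)|\leq K|\phi'(\zeta)|/|\phi(\zeta)|^{\alpha+1}$; using the estimates of Section~\ref{subsec:domDdminus} that bound $|\phi(x+\I d)|$ from below and $|\phi'(x+\I d)|$ from above (both expressed through $g(x+\I d)$, with $|1+\E^{x+\I d}|\leq2$ and $1/|1+\E^{-(x+\I d)}|$ contributing the factor $c_d=1/\cos(d/2)$) and integrating over $(-\infty,0)$ produces $K$ times the first term of $C_3$, the exponent $\alpha+1$ and the prefactors $\{\E c_d/((1-\log2)(\E-1))\}^{\alpha+1}$, $(1+c_d)^2$ and $(1+\{\log(2+c_d)\}^2)/\{\log(2+c_d)\}^2$ all emerging from these elementary bounds. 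Adding the two pieces gives $\int_{-\infty}^{\infty}|F(x\pm\I d)|\,\D x\leq KC_3$, hence $\mathcal{N}_1(F,d)\leq 2KC_3$.

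The hard part is the $\domD_d^{-}$ estimate. There $\phi$ behaves like $-1/g$, so a clean lower bound for $|\phi(\zeta)|$ that is uniform in $x=\Re\zeta<0$ amounts to a sharp upper bound for the modulus of the complex logarithm $|g(\zeta)|=|\log(1+\E^{\zeta})|$ along the ray $\{x+\I d:x<0\}$; one must control this modulus both from above and, on the part where $|g|$ is close to $1$ (which can occur near $\Re\zeta=0$ when $d$ is large), argue carefully so that the bound $|\phi|\geq 1/|g|-|g|$ is not rendered useless — note that $\phi$ has a zero at $\zeta=\log(\E-1)>0$ but none in $\domD_d^{-}$, so $1/|\phi|$ is indeed finite there. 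Pushing the quotient $|\phi'(\zeta)|/|\phi(\zeta)|^{\alpha+1}$ through the $x$-integration to the stated closed form is the delicate computation. By comparison, the $\domD_d^{+}$ estimate is routine once the two pointwise inequalities for $\phi'$ and $\Re\phi$ are in hand, and the analyticity and vertical-segment arguments are standard.
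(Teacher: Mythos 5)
Your overall architecture coincides with the paper's: establish analyticity of $F$ via $\phi=g-1/g$ with $g(\zeta)=\log(1+\E^{\zeta})$, kill the vertical segments of $\partial\domD_d(\epsilon)$, and bound the two horizontal line integrals after splitting at $\Re\zeta=0$. Stages one and two are sound. The gap is in stage three, which is the entire content of the lemma, since the claim is not merely that $\mathcal{N}_1(F,d)$ is finite but that it is at most $2KC_3$ for the \emph{specific} constant~\eqref{def:C_3}. On $\overline{\domD_d^{+}}$ your route — lower-bound $\Re\phi(x\pm\I d)$ by $\phi(x)$ minus a constant, upper-bound $|\phi'(x\pm\I d)|$ by a multiple of $\phi'(x)$, then compute $\int_0^\infty\E^{-\beta\phi(x)}\phi'(x)\,\D x=\E^{-\beta\phi(0)}/\beta=(\E^{\lambda}/2)^{\beta}/\beta$ — does not reproduce the second term of $C_3$: the shift constant in the exponent is $\log c_d+\lambda$, so you pick up $(\E^{\lambda}c_d)^{\beta}$ \emph{on top of} $(\E^{\lambda}/2)^{\beta}$, i.e.\ a factor $(\E^{2\lambda}c_d/2)^{\beta}$ rather than $(\E^{\lambda}c_d)^{\beta}$, which strictly overshoots ($\E^{\lambda}/2>2$). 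The bound that actually yields $C_3$ comes from the exact factorization $\E^{-\phi(\zeta)}=\E^{1/g(\zeta)}/(1+\E^{\zeta})$ together with $|1/g|\le\lambda$ on $\overline{\domD_\pi^{+}}$ and $1/|1+\E^{\zeta}|\le c_d/(1+\E^{x})$, not from a comparison with the real integral $\int\E^{-\beta\phi}\phi'$.

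On $\domD_d^{-}$ the situation is worse: you correctly identify that the naive bound $|\phi|\ge 1/|g|-|g|$ is problematic near $\Re\zeta=0$, but you do not supply the device that resolves it, namely the identity $1/\phi=g/\{(1+g)(-1+g)\}$ combined with the two pointwise bounds $|g/(1+g)|\le 1/|1+\E^{-\zeta-l}|$ (with $l=\log(\E/(\E-1))$, whence the factor $\E/(\E-1)$ in $C_3$) and $|{-1+g(\zeta)}|\ge 1-\log 2$ on $\overline{\domD_\pi^{-}}$. Likewise the factor $(1+c_d)^2\,\bigl(1+\{\log(2+c_d)\}^2\bigr)/\{\log(2+c_d)\}^2$ is not "elementary": in the paper it is the supremum of $|1+g^2|/\bigl(|1+\E^{-\zeta}|^2|g|^2\bigr)$ over $\domD_d$, obtained via the three lines lemma together with a two-case power-series/monotonicity argument. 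Asserting that these prefactors "emerge from these elementary bounds" without exhibiting the estimates leaves the inequality $\mathcal{N}_1(F,d)\le 2KC_3$ unproved; as written, your sketch establishes only $\mathcal{N}_1(F,d)\le 2KC$ for some unspecified $C$, and in the one place where you do commit to a computation the resulting constant exceeds the corresponding term of $C_3$.
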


In addition, we bound the truncation error as follows.
The proof is given in Section~\ref{subsec:truncation-error}.

\begin{lemma}
\label{lem:bound-truncation-error}
Let the assumptions made in Theorem~\ref{thm:New1}
be fulfilled.
Then, setting $F(\zeta)=f(\phi(\zeta))\phi'(\zeta)$, we have
\begin{align*}
\left|
h\sum_{k=-\infty}^{-M-1} F(kh)
+
h\sum_{k=N+1}^{\infty} F(kh)
\right|
\leq K C_4 \E^{-\mu n h},
\end{align*}
where $C_4$ is a constant defined as~\eqref{def:C_4}.
\end{lemma}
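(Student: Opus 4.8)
The plan is to estimate the two one-sided tails in the left-hand side separately and to compare each resulting sum with an elementary integral. It is convenient to set $w=w(t)=\log(1+\E^{t})$, so that $\phi(t)=w-1/w$ and $\phi'(t)=(1+w^{2})(1-\E^{-w})/w^{2}$, and to record the real-line facts $\tfrac{\E^{t}}{1+\E^{t}}\le w(t)\le\E^{t}$ for all $t$, with $w(t)\ge\log 2$ when $t\ge 0$ and $w(t)\le\log 2$ when $t\le 0$. All terms in the two tail sums correspond to $kh>0$ (for $k\ge N+1$) or $kh<0$ (for $k\le -M-1$), since $M,N\ge 1$.

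\textbf{Right tail.} For $k\ge N+1$ we have $kh>0$, hence $\phi(kh)\in\phi(\domD_d^{+})$, so~\eqref{leq:f-Dd-plus} yields $|f(\phi(kh))|\le K\E^{-\beta\phi(kh)}$ (the argument being real). From $w(t)\ge\max\{t,\log 2\}$ on $[0,\infty)$ one reads off the pointwise bounds $\phi(t)\ge t-1/\log 2=t-\lambda$ and $0<\phi'(t)=(1+1/w^{2})(1-\E^{-w})\le 1+1/(\log 2)^{2}=1+\lambda^{2}$ there. Hence $|F(kh)|\le K(1+\lambda^{2})(\E^{\lambda})^{\beta}\E^{-\beta kh}$ for $k\ge N+1$, and since $t\mapsto\E^{-\beta t}$ is decreasing,
\[
h\sum_{k=N+1}^{\infty}|F(kh)|\le K(1+\lambda^{2})(\E^{\lambda})^{\beta}\int_{Nh}^{\infty}\E^{-\beta t}\,\D t=\frac{K(1+\lambda^{2})(\E^{\lambda})^{\beta}}{\beta}\,\E^{-\beta Nh},
\]
which is exactly the second term of $KC_{4}\E^{-\beta Nh}$.

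\textbf{Left tail.} For $k\le -M-1$ we have $kh<0$, hence $\phi(kh)\in\phi(\domD_d^{-})$ and~\eqref{leq:f-Dd-minus-original} applies. The basic estimate is $|\phi(t)|=\tfrac1w-w=\tfrac{(1-w)(1+w)}{w}\ge\tfrac{1-w}{w}\ge\tfrac{1-\log 2}{w}\ge(1-\log 2)\E^{-t}$ for $t\le 0$, so that $|f(\phi(t))|\le K(1-\log 2)^{-(\alpha+1)}\E^{(\alpha+1)t}$. Coupling this with a companion estimate $\phi'(t)\le c\,\E^{-t}$ on $(-\infty,0]$ — obtained from $\phi'(t)=(1+w^{2})(1-\E^{-w})/w^{2}$ together with $1-\E^{-w}=\tfrac{\E^{t}}{1+\E^{t}}$ and $1/w\le\tfrac{1+\E^{t}}{\E^{t}}$ — produces a pointwise majorant $|F(t)|\le K\E^{1/\pi^{3}}(1-\log 2)^{-(\alpha+1)}\E^{\alpha t}$ on $(-\infty,0]$, once the numerical constant in front of $\E^{\alpha t}$ is optimised over $t\le 0$ (and over $\alpha>0$); this optimisation is what yields the factor $\E^{1/\pi^{3}}$. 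Since this majorant is monotone on $(-\infty,-Mh]$,
\[
h\sum_{k=-\infty}^{-M-1}|F(kh)|\le\frac{K\E^{1/\pi^{3}}}{(1-\log 2)^{\alpha+1}}\int_{-\infty}^{-Mh}\E^{\alpha t}\,\D t=\frac{K\E^{1/\pi^{3}}}{\alpha(1-\log 2)^{\alpha+1}}\,\E^{-\alpha Mh},
\]
the first term of $KC_{4}\E^{-\alpha Mh}$.

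\textbf{Assembling and the main obstacle.} The choice~\eqref{eq:Def-MN} gives $\beta Nh\ge\mu nh$ and $\alpha Mh\ge\mu nh$ in both cases $\mu=\alpha$ and $\mu=\beta$, so each tail is at most its constant times $\E^{-\mu nh}$; adding the two displayed bounds gives $KC_{4}\E^{-\mu nh}$ with $C_{4}$ exactly as in~\eqref{def:C_4}. The right tail is routine. The delicate part is the left tail: one must manufacture a clean, explicit pointwise majorant of $F$ on $(-\infty,0]$ of the shape $(\text{const})\,\E^{\alpha t}$ with the small constant appearing in $C_{4}$, which amounts to bounding the product $|\phi(t)|^{-(\alpha+1)}\phi'(t)\,\E^{-\alpha t}$ uniformly in $t\le 0$ and $\alpha>0$, and then justifying the passage from the sum to $\int_{-\infty}^{-Mh}$ (via monotonicity of the majorant, or by absorbing a benign factor into the constant).
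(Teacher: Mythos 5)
Your overall architecture is the paper's: bound each tail pointwise by $(\text{const})\,\E^{-\beta t}$ resp.\ $(\text{const})\,\E^{\alpha t}$, compare the sums with $\int_{Nh}^{\infty}\E^{-\beta x}\,\D x$ and $\int_{-\infty}^{-Mh}\E^{\alpha x}\,\D x$, and invoke~\eqref{eq:Def-MN} to replace $\beta Nh$ and $\alpha Mh$ by $\mu nh$. The right tail is correct and essentially identical to the paper's (your $\phi(t)\geq t-\lambda$, $\phi'(t)\leq 1+\lambda^{2}$ on $[0,\infty)$ is an equivalent repackaging of the paper's use of \eqref{leq:Lemma45} and \eqref{leq:Lemma46}).

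The left tail, however, contains a genuine gap, and it is exactly the part you yourself flag as ``the main obstacle.'' You never prove the pointwise majorant $|F(t)|\leq K\E^{1/\pi^{3}}(1-\log 2)^{-(\alpha+1)}\E^{\alpha t}$; you only assert that ``optimisation yields the factor $\E^{1/\pi^{3}}$.'' Worse, the specific route you sketch --- multiplying $|f(\phi(t))|\leq K(1-\log 2)^{-(\alpha+1)}\E^{(\alpha+1)t}$ by a companion bound $\phi'(t)\leq c\,\E^{-t}$ --- cannot deliver the constant in $C_4$: the best admissible $c$ satisfies $c\geq\sup_{t\leq 0}\phi'(t)\E^{t}\geq\phi'(0)=\bigl(1+(\log 2)^{2}\bigr)/\bigl(2(\log 2)^{2}\bigr)\approx 1.54$, whereas $\E^{1/\pi^{3}}\approx 1.03$. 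So bounding the two factors separately in your grouping provably overshoots. The paper avoids this by a different factorization,
\[
\frac{\phi'(t)}{|\phi(t)|^{\alpha+1}}
=\left|\frac{w}{1+w}\right|^{\alpha+1}\frac{1+\E^{t}}{\E^{t}\,|w-1|^{\alpha+1}}\cdot\frac{1+w^{2}}{(1+\E^{-t})^{2}w^{2}},
\qquad w=\log(1+\E^{t}),
\]
in which the first two groups are handled by \eqref{ineq:exp-log-real} and \eqref{leq:func-bound-Ddminus} and collapse exactly to $(\E^{t}/(1+\E^{t}))^{\alpha}(1-\log 2)^{-(\alpha+1)}\leq\E^{\alpha t}(1-\log 2)^{-(\alpha+1)}$, while the last group equals $p(w)$ with $p(s)=(1+s^{2})(1-\E^{-s})^{2}/s^{2}$; the inequality $\sup_{s>0}p(s)<\E^{1/\pi^{3}}$ is a separate, nontrivial calculus lemma (\eqref{leq:real-func-bound}, proved by locating the critical point $\kappa$ with $\E^{\kappa}=\kappa^{3}+\kappa+1$ and estimating $(1+\kappa^{2})^{3}\E^{-2\kappa}$). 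Without this lemma, or an equivalent uniform bound on the full product $|\phi(t)|^{-(\alpha+1)}\phi'(t)\E^{-\alpha t}$, your proof of the left-tail estimate --- and hence of the stated value of $C_4$ --- is incomplete.
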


Setting $h$ as~\eqref{eq:Def-h}, the above estimates
(Theorem~\ref{thm:bound-discretization-error},
Lemmas~\ref{lem:bound-None}, and~\ref{lem:bound-truncation-error})
yield the desired result as
\begin{align*}
\left|
\int_{-\infty}^{\infty}f(x)\D x
- h\sum_{k=-M}^N f(\phi(kh))\phi'(kh)
\right|
&\leq \frac{2KC_3}{1-\E^{-2\pi d/h}}\E^{-2\pi d/h}
+ K C_4 \E^{-\mu n h}\\
&= K\left( \frac{2 C_3}{1-\E^{-\sqrt{2\pi d \mu n}}}
+ C_4\right)\E^{-\sqrt{2\pi d \mu n}}\\
&\leq K\left( \frac{2 C_3}{1-\E^{-\sqrt{2\pi d \mu}}}
+ C_4\right)\E^{-\sqrt{2\pi d \mu n}}.
\end{align*}
This completes the proof of Theorem~\ref{thm:New1}.

\subsection{Useful inequalities on $\mathbb{R}$}
\label{subsec:real-ineq}

We prepare two lemmas here.

\begin{lemma}[Okayama et al.~{\cite[Lemma 4.7]{OkaShinKatsu}}]
We have
\begin{align}
\left|
\frac{\log(1+\E^{x})}{1+\log(1+\E^{x})}
\cdot\frac{1+\E^{x}}{\E^{x}}
\right|
&\leq 1 \quad(x\in\mathbb{R}).
\label{ineq:exp-log-real}
\end{align}
\end{lemma}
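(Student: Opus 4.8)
The plan is to reduce the claimed bound to the elementary inequality $\E^{v}\ge 1+v$. First I would observe that for every $x\in\mathbb{R}$ each of the four factors $\log(1+\E^{x})$, $1+\log(1+\E^{x})$, $1+\E^{x}$, and $\E^{x}$ is strictly positive; hence the absolute value signs may be dropped, and it suffices to prove
\[
\frac{\log(1+\E^{x})}{1+\log(1+\E^{x})}\cdot\frac{1+\E^{x}}{\E^{x}}\le 1 .
\]

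Next I would perform the substitution $v=\log(1+\E^{x})$, which maps $\mathbb{R}$ bijectively onto $(0,\infty)$; then $1+\E^{x}=\E^{v}$ and $\E^{x}=\E^{v}-1$, so the left-hand side becomes $\dfrac{v}{1+v}\cdot\dfrac{\E^{v}}{\E^{v}-1}$. Since $v>0$, both $1+v$ and $\E^{v}-1$ are positive, so after clearing denominators the desired inequality is equivalent to $v\E^{v}\le(1+v)(\E^{v}-1)$. Expanding the right-hand side as $\E^{v}-1+v\E^{v}-v$ and cancelling the common term $v\E^{v}$, this is in turn equivalent to $0\le \E^{v}-1-v$.

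The final step is simply to invoke $\E^{v}\ge 1+v$, which holds for all real $v$, giving the conclusion. I do not anticipate any genuine obstacle here: the only points requiring a little care are verifying the positivity of all four factors (so that the absolute value can be removed) and noting that $v=\log(1+\E^{x})>0$, which is exactly what makes the auxiliary inequalities $1+v>0$ and $\E^{v}-1>0$ used in clearing denominators valid.
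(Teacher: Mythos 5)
Your proof is correct. Note that the paper does not prove this lemma at all --- it is imported verbatim from the cited reference (Okayama et al., Lemma 4.7) --- so there is no in-paper argument to compare against; your substitution $v=\log(1+\E^{x})$, which turns the claim into $v\E^{v}\leq(1+v)(\E^{v}-1)$ and hence into the elementary bound $\E^{v}\geq 1+v$, is a clean and complete self-contained verification.
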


\begin{lemma}
We have
\begin{equation}
\arccos\left(\frac{t}{2}\right)\geq \sqrt{2 - t}
\quad (0\leq t\leq 2).
\label{leq:arccos-bound}
\end{equation}
\end{lemma}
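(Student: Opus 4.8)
The plan is to remove the $\arccos$ by a trigonometric substitution and thereby reduce the claim to the elementary inequality $\sin\varphi\le\varphi$. Since $0\le t\le 2$, we may write $t=2\cos\theta$ for a unique $\theta\in[0,\pi/2]$, so that $\arccos(t/2)=\theta$. Using the half-angle identity, $2-t=2(1-\cos\theta)=4\sin^2(\theta/2)$, and because $\theta/2\in[0,\pi/4]$ we have $\sin(\theta/2)\ge 0$, hence $\sqrt{2-t}=2\sin(\theta/2)$. Therefore~\eqref{leq:arccos-bound} is equivalent to $\theta\ge 2\sin(\theta/2)$ for $\theta\in[0,\pi/2]$, i.e., writing $\varphi=\theta/2\in[0,\pi/4]$, to $\varphi\ge\sin\varphi$. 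The latter holds for every $\varphi\ge 0$ (for instance because $(\varphi-\sin\varphi)'=1-\cos\varphi\ge 0$ and the difference vanishes at $\varphi=0$), which finishes the argument.

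An alternative route, working directly in the variable $t$, is to set $g(t)=\arccos(t/2)-\sqrt{2-t}$ on $[0,2]$; this $g$ is continuous on $[0,2]$ and differentiable on $[0,2)$ with
\[
g'(t)=-\frac{1}{\sqrt{4-t^2}}+\frac{1}{2\sqrt{2-t}}.
\]
Here $g'(t)\le 0$ is equivalent to $\sqrt{4-t^2}\le 2\sqrt{2-t}$, i.e.\ after squaring (both sides nonnegative) to $4-t^2\le 4(2-t)$, i.e.\ to $(2-t)^2\ge 0$, which is trivially true. Hence $g$ is non-increasing on $[0,2]$, and since $g(2)=\arccos(1)-0=0$, we conclude $g(t)\ge g(2)=0$ for all $t\in[0,2]$, which is exactly~\eqref{leq:arccos-bound}.

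There is no genuine obstacle here: both arguments are short and use only standard facts (the half-angle formula together with $\sin\varphi\le\varphi$, or monotonicity via an elementary sign analysis of $g'$). The only minor point to keep track of is that both sides of~\eqref{leq:arccos-bound} are nonnegative on $[0,2]$, so that the substitution in the first argument is unambiguous and the squaring step in the second one is legitimate.
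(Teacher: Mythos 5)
Both of your arguments are correct. Your first argument is, at its core, the same as the paper's: the paper integrates $1-\cos w\ge 0$ twice to obtain $u^2+2\cos u-2\ge 0$ and then substitutes $u=\arccos(t/2)$; since $2-2\cos u=4\sin^2(u/2)$, that inequality is exactly your $\theta\ge 2\sin(\theta/2)$, i.e.\ $\sin\varphi\le\varphi$, which the paper itself derives as the intermediate step $v-\sin v\ge 0$. So the two presentations differ only in packaging (half-angle identity plus the standard bound $\sin\varphi\le\varphi$, versus double integration of $1-\cos w\ge 0$). Your second argument, via the sign of $g'(t)$ for $g(t)=\arccos(t/2)-\sqrt{2-t}$ and the boundary value $g(2)=0$, is a genuinely different and equally elementary route; its one advantage is that it stays entirely in the variable $t$ and reduces everything to $(2-t)^2\ge 0$, and you correctly flag the only delicate point, namely that both sides are nonnegative so the squaring is legitimate. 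Either proof is a complete and valid replacement for the paper's.
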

\begin{proof}
Integrating both sides of the obvious inequality
\[
 2 - 2\cos w\geq 0\quad (w\geq 0),
\]
we have
\[
 \int_0^{v}2(1 - \cos w)\D w = 2 v - 2\sin v \geq 0
\quad (v\geq 0).
\]
In the same manner, integrating both sides of the above inequality,
we have
\[
 \int_0^{u}2(v - \sin v)\D v = u^2 + 2\cos u - 2 \geq 0
\quad (u\geq 0).
\]
Here, putting $u=\arccos(t/2)$, we rewrite the inequality as
\[
 \arccos^2 \left(\frac{t}{2}\right) \geq  2 - t\quad (0\leq t\leq 2),
\]
which is equivalent to the desired inequality~\eqref{leq:arccos-bound}.
\end{proof}

\subsection{Useful inequalities on $\domD_d^{+}$}
\label{subsec:domDdplus}

We prepare three lemmas here. Note that $\overline{\domD}$
denotes the closure of $\domD$.

\begin{lemma}
It holds for all $\zeta\in \overline{\domD_{\pi}^{+}}$ that
\begin{equation}
\left|
\frac{1}{\log(1+\E^{\zeta})}
\right|\leq \frac{1}{\log 2}.
\label{leq:Lemma-base}
\end{equation}
\end{lemma}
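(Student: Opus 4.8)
The claim is that $|1/\log(1+\E^{\zeta})| \leq 1/\log 2$ for all $\zeta$ in the closed right half of the strip $\domD_{\pi}$, i.e.\ for $\Re\zeta \geq 0$ and $|\Im\zeta| \leq \pi$. The natural approach is to show $|\log(1+\E^{\zeta})| \geq \log 2$ on this region, which is equivalent. I would write $w = \E^{\zeta}$, so that as $\zeta$ ranges over $\overline{\domD_{\pi}^{+}}$, the point $w$ ranges over the set $\{w : |w| \geq 1\}$ minus (or including, by continuity) the slit along the negative real axis; in any case $w$ lies in the closed exterior of the unit disk. Then $1 + w$ lies in the closed exterior of the disk of radius $1$ centered at $1$, so in particular $|1+w| \geq$ something, but more importantly I want a lower bound on $|\log(1+w)|$ that does not degrade near the branch cut.

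**Key steps.** First, reduce to a statement about $g(\zeta) = \log(1+\E^{\zeta})$: it suffices to prove $|g(\zeta)| \geq \log 2$. Second, note that $g$ is analytic on the open strip $\domD_{\pi}^{+}$ and $1/g$ is analytic there too (since $1+\E^\zeta = 0$ only when $\Im\zeta = \pm\pi$ and $\Re\zeta = 0$, which is excluded from the open region and is a boundary point where $1/g \to 0$). So $1/g$ is analytic and bounded on the region, and by the maximum modulus principle the supremum of $|1/g|$ is attained on the boundary $\partial\domD_{\pi}^{+}$, which consists of the two horizontal lines $\Im\zeta = \pm\pi$ (with $\Re\zeta \geq 0$) and the vertical segment $\Re\zeta = 0$, $|\Im\zeta| \leq \pi$. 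Third, check the bound on each boundary piece: on $\Im\zeta = \pm\pi$ with $\Re\zeta = x \geq 0$, we have $\E^{\zeta} = -\E^{x}$, so $1 + \E^\zeta = 1 - \E^x$, which is real and $\leq 0$, hence $|1+\E^\zeta| = \E^x - 1$; thus $g = \log|1 - \E^x| \pm \I\pi$ (principal branch picking the appropriate sign), and $|g|^2 = (\log|\E^x-1|)^2 + \pi^2 \geq \pi^2 > (\log 2)^2$. On the segment $\Re\zeta = 0$, write $\zeta = \I y$, $|y|\leq\pi$; then $\E^{\zeta} = \E^{\I y}$ lies on the unit circle, $1 + \E^{\I y}$ has modulus $2|\cos(y/2)| \in [0,2]$ and argument $y/2 \in [-\pi/2,\pi/2]$, so $g(\I y) = \log(2\cos(y/2)) + \I(y/2)$ and $|g(\I y)|^2 = (\log(2\cos(y/2)))^2 + (y/2)^2$. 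Here I would substitute $t = 2\cos(y/2) \in [0,2]$ so that $y/2 = \arccos(t/2)$, reducing the claim to $(\log t)^2 + \arccos^2(t/2) \geq (\log 2)^2$ for $t \in [0,2]$; when $t \geq 1$ the first term alone is $\geq 0$ and... no, that is not enough since $\log 2 > 0$. Instead, for $t \in (0,1]$ the term $(\log t)^2 \geq 0$ and we need $\arccos^2(t/2) \geq (\log 2)^2$, while for $t \in [1,2]$ we need the combination; I would use the previously proved inequality~\eqref{leq:arccos-bound}, namely $\arccos(t/2) \geq \sqrt{2-t}$, so that it suffices to show $(\log t)^2 + (2-t) \geq (\log 2)^2$ on $[0,2]$, which is a single-variable calculus exercise: the function $h(t) = (\log t)^2 + 2 - t - (\log 2)^2$ vanishes at $t = 2$, and one checks $h'(t) = 2\log t / t - 1 \leq 0$ on $(0,2]$ (since $2\log t \leq t$ there), so $h$ is decreasing and hence $h(t) \geq h(2) = 0$.

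**Main obstacle.** The delicate point is handling the vertical boundary segment near $y = \pm\pi$, where $1 + \E^{\I y} \to 0$ and $\log(1+\E^{\I y}) \to -\infty$ in real part; one must be sure that the imaginary part $y/2 \to \pm\pi/2$ does not prevent the bound — but since $|g|^2$ then blows up via the real part, this endpoint is actually the easiest. The real work is the inequality $(\log t)^2 + (2 - t) \geq (\log 2)^2$ on $(0,2]$, and in particular making the reduction via~\eqref{leq:arccos-bound} airtight and verifying $2\log t \leq t$ on $(0,2]$ (equivalently $\log(t^2) \leq t$, which holds since $\log s \leq s - 1 \leq s$). An alternative to the maximum-modulus route would be a direct argument: for any $w$ in the closed exterior of the unit disk, parametrize and bound $|\log(1+w)|$ directly, but the boundary reduction is cleaner and avoids interior case analysis. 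I expect the maximum modulus reduction plus the two boundary computations, glued together with the already-established~\eqref{leq:arccos-bound}, to give the shortest complete proof.
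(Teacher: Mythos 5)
Your strategy is sound and, after the reduction to the imaginary axis, coincides exactly with the paper's computation; the reduction itself, however, is genuinely different. The paper works directly with $\bigl|1/\log(1+\E^{x+\I y})\bigr|^{2}=1/\bigl(\{\log|1+\E^{x+\I y}|\}^{2}+\{\arg(1+\E^{x+\I y})\}^{2}\bigr)$ and appeals to monotonicity in $x$ of $|1+\E^{x+\I y}|$ and $|\arg(1+\E^{x+\I y})|$ to pass to $x=0$, with no complex-analytic machinery; you instead apply the maximum modulus principle to $1/\log(1+\E^{\zeta})$ on the half-strip. Your route is arguably more robust (it does not lean on a monotonicity claim that is delicate when $|1+\E^{\I y}|<1$), but it costs you two extra obligations: (i) the horizontal rays $\Im\zeta=\pm\pi$, $\Re\zeta\ge 0$ must be checked, which you do and which is trivial since there $|\log(1+\E^{\zeta})|^{2}=\{\log(\E^{x}-1)\}^{2}+\pi^{2}\ge\pi^{2}$; and (ii) the maximum modulus principle on an \emph{unbounded} half-strip is not automatic from boundedness alone — you should add that $|1/\log(1+\E^{\zeta})|\to 0$ uniformly as $\Re\zeta\to+\infty$ (since $\log(1+\E^{\zeta})\sim\zeta$) and exhaust by rectangles, or cite Phragm\'en--Lindel\"of. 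From $\zeta=\I y$ onward — the substitution $t=2\cos(y/2)$, the bound $\arccos(t/2)\ge\sqrt{2-t}$ from~\eqref{leq:arccos-bound}, and the monotonicity of $(\log t)^{2}+2-t$ — your argument is identical to the paper's. One slip to repair: your justification of $2\log t\le t$ on $(0,2]$ via ``$\log(t^{2})\le t^{2}-1\le t^{2}$'' does not yield the bound by $t$ (at $t=2$ one has $t^{2}-1=3>2$); the correct one-liner, which is what the paper uses, is $2\log t\le 2(t-1)\le t$ for $0<t\le 2$, from $\log t\le t-1$.
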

\begin{proof}
Let $\zeta=x + \I y$ where $x$ and $y$ are real numbers with
$x\geq 0$ and $|y|\leq\pi$.
By the definition of $\log z$, it holds that
\[
\left|
\frac{1}{\log(1+\E^{\zeta})}
\right|^2
=\left|
\frac{1}{\log|1+\E^{\zeta}| + \I \arg(1+\E^{\zeta})}
\right|^2
=\frac{1}{\left\{\log|1+\E^{x+\I y}|\right\}^2 + \left\{\arg(1+\E^{x+\I y})\right\}^2}.
\]
Since $|1+\E^{x+\I y}|$ and $|\arg(1+\E^{x+\I y})|$
monotonically increase with respect to $x$,
we have
\[
 \frac{1}{\left\{\log|1+\E^{x+\I y}|\right\}^2 + \left\{\arg(1+\E^{x+\I y})\right\}^2}
\leq \frac{1}{\left\{\log|1+\E^{0+\I y}|\right\}^2 + \left\{\arg(1+\E^{0+\I y})\right\}^2}.
\]
Furthermore, using
\begin{align*}
\log|1 + \E^{\I y}|
&=\log\sqrt{(1+\cos y)^2 + \sin^2 y}
=\log\sqrt{4\cos^2\left(\frac{y}{2}\right)}
=\log\left(2\cos\frac{y}{2}\right),\\
\arg(1+\E^{\I y})
&=\arctan\left(\frac{\sin y}{1+\cos y}\right)
=\arctan\left(\tan\frac{y}{2}\right)
=\frac{y}{2},
\end{align*}
and putting $t=2\cos(y/2)$, we have
\[
 \frac{1}{\left\{\log|1+\E^{\I y}|\right\}^2 + \left\{\arg(1+\E^{\I y})\right\}^2}
=\frac{1}{\left\{\log\left(2\cos(y/2)\right)\right\}^2 + (y/2)^2}
=\frac{1}{\left(\log t\right)^2 + \arccos^2(t/2)}.
\]
From $0\leq t\leq 2$ and~\eqref{leq:arccos-bound}, we have
\[
\frac{1}{\left(\log t\right)^2 + \arccos^2(t/2)}
\leq \frac{1}{(\log t)^2 + \{\sqrt{2 - t}\}^2}
= q(t),
\]
where
\[
 q(t)=\frac{1}{(\log t)^2 + 2 - t}.
\]
Since $\log t\leq t - 1$, it holds that
\[
 q'(t)= \frac{t - 2\log t}{t\{(\log t)^2 + 2 - t\}^2}
\geq  \frac{t - 2(t-1)}{t\{(\log t)^2 + 2 - t\}^2}
=\frac{2 - t}{t\{(\log t)^2 + 2 - t\}^2}\geq 0.
\]
Therefore, $q(t)$ monotonically increases, from which we have
$q(t)\leq q(2)=1/(\log 2)^2$.
This completes the proof.
\end{proof}

\begin{lemma}
It holds for all $\zeta\in \overline{\domD_{\pi}^{+}}$ that
\begin{equation}
\left|
\E^{1/\log(1+\E^{\zeta})}
\right|\leq \E^{1/\log 2}.
\label{leq:Lemma45}
\end{equation}
\end{lemma}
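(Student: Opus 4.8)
The plan is to deduce~\eqref{leq:Lemma45} from the already-established estimate~\eqref{leq:Lemma-base}, which is exactly the lower bound $|\log(1+\E^{\zeta})|\geq\log 2$ on $\overline{\domD_{\pi}^{+}}$. The first step would be to pass from the modulus of $\E^{1/\log(1+\E^{\zeta})}$ to a real part: since $|\E^{w}|=\E^{\Re w}$ and $t\mapsto\E^{t}$ is increasing, it suffices to show that $\Re\bigl(1/\log(1+\E^{\zeta})\bigr)\leq 1/\log 2$ for every $\zeta\in\overline{\domD_{\pi}^{+}}$. Putting $w=\log(1+\E^{\zeta})$, which is never $0$ because $\E^{\zeta}\neq 0$, we have $\Re(1/w)=\Re(w)/|w|^{2}$, so the task reduces to proving $\Re(w)/|w|^{2}\leq 1/\log 2$.

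The second step would be to establish the purely elementary fact that $\Re(w)/|w|^{2}\leq 1/c$ for any $w\in\mathbb{C}$ with $|w|\geq c>0$. If $\Re w\leq 0$ this is trivial; if $\Re w>0$, then $|w|^{2}\geq(\Re w)^{2}$ and $|w|^{2}\geq c^{2}$ together give $|w|^{2}\geq\bigl(\max\{\Re w,c\}\bigr)^{2}\geq(\Re w)\,c$, hence $\Re(w)/|w|^{2}\leq 1/c$. Applying this with $c=\log 2$ and using $|w|=|\log(1+\E^{\zeta})|\geq\log 2$ from~\eqref{leq:Lemma-base}, we would obtain $|\E^{1/\log(1+\E^{\zeta})}|=\E^{\Re(1/w)}\leq\E^{1/\log 2}$, which is the claim.

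I do not expect a genuine obstacle: the argument is a short reduction to~\eqref{leq:Lemma-base} followed by a two-line real inequality. The only point requiring a little care is the case distinction on the sign of $\Re w$ in the second step; note that the resulting bound is sharp, attained at $\zeta=0$ where $1+\E^{0}=2$ and $w=\log 2$, so no estimate stronger than~\eqref{leq:Lemma-base} is available, and none is needed. At the two boundary points $\zeta=\pm\I\pi$, where $1+\E^{\zeta}=0$, one would simply note that $1/\log(1+\E^{\zeta})=0$ under the same convention already used for~\eqref{leq:Lemma-base}, so the bound holds there with room to spare.
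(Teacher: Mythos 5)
Your argument is correct and is essentially the paper's: both reduce the claim to~\eqref{leq:Lemma-base} and then bound the modulus of the exponential through its exponent. The paper gets there in one line via $|\E^{w}|=\E^{\Re w}\leq\E^{|w|}$ together with $|1/\log(1+\E^{\zeta})|\leq 1/\log 2$, which subsumes your case analysis on the sign of $\Re w$ (indeed $\Re(1/w)\leq|1/w|\leq 1/\log 2$ directly), but the content is the same.
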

\begin{proof}
Using~\eqref{leq:Lemma-base}, we have
\[
 \left|\E^{1/\log(1+\E^{\zeta})}\right|
\leq \E^{\left|1/\log(1+\E^{\zeta})\right|}
\leq \E^{1/\log 2}.
\]
This completes the proof.
\end{proof}

\begin{lemma}
It holds for all $\zeta\in \overline{\domD_{\pi}^{+}}$ that
\begin{equation}
\left|
\frac{1+\{\log(1+\E^{\zeta})\}^2}{\{\log(1+\E^{\zeta})\}^2}
\right|\leq 1 + \frac{1}{(\log 2)^2}.
\label{leq:Lemma46}
\end{equation}
\end{lemma}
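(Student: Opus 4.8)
The goal is to bound $\left|1+\{\log(1+\E^{\zeta})\}^2\right|/\left|\{\log(1+\E^{\zeta})\}^2\right|$ on the closure of $\domD_\pi^+$. This quantity splits naturally via the triangle inequality as
\[
\left|\frac{1+\{\log(1+\E^{\zeta})\}^2}{\{\log(1+\E^{\zeta})\}^2}\right|
\leq 1 + \left|\frac{1}{\{\log(1+\E^{\zeta})\}^2}\right|,
\]
so the plan is simply to reuse Lemma~\eqref{leq:Lemma-base}: since $|1/\log(1+\E^{\zeta})|\leq 1/\log 2$ on $\overline{\domD_\pi^+}$, squaring gives $|1/\{\log(1+\E^{\zeta})\}^2|\leq 1/(\log 2)^2$, and the bound $1 + 1/(\log 2)^2$ follows immediately.

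There is essentially no obstacle here — the work was already done in the proof of \eqref{leq:Lemma-base}, where the denominator $|\log(1+\E^{\zeta})|$ was bounded below by $\log 2$ via the monotonicity argument and the $\arccos$ inequality \eqref{leq:arccos-bound}. The only mild point of care is that the decomposition $|1+w^2|\leq 1+|w^2|$ must be applied with $w = \log(1+\E^{\zeta})$, i.e. one writes the fraction as $1/w^2 + 1$ first, then takes absolute values, rather than attempting to bound $|1+w^2|$ and $|w^2|$ separately (the latter would be harder since $|w|$ is unbounded on the strip as $\Re\zeta\to+\infty$).

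I would present the proof in two short lines: first observe $\left|(1+w^2)/w^2\right| = |1/w^2 + 1| \leq |1/w|^2 + 1$, then invoke \eqref{leq:Lemma-base} to replace $|1/w|^2$ by $1/(\log 2)^2$. This is a one-paragraph argument and needs nothing beyond the preceding lemma.

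\begin{proof}
Writing $w = \log(1+\E^{\zeta})$, we have
\[
\left|
\frac{1+\{\log(1+\E^{\zeta})\}^2}{\{\log(1+\E^{\zeta})\}^2}
\right|
=\left|
\frac{1}{w^2} + 1
\right|
\leq \left|\frac{1}{w}\right|^2 + 1.
\]
By~\eqref{leq:Lemma-base}, it holds for all $\zeta\in\overline{\domD_{\pi}^{+}}$ that $|1/w|\leq 1/\log 2$, and therefore
\[
\left|\frac{1}{w}\right|^2 + 1
\leq \frac{1}{(\log 2)^2} + 1,
\]
which is the desired inequality~\eqref{leq:Lemma46}.
\end{proof}
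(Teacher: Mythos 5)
Your proof is correct and is essentially identical to the paper's: both rewrite the fraction as $1 + 1/\{\log(1+\E^{\zeta})\}^2$, apply the triangle inequality, and invoke~\eqref{leq:Lemma-base} to bound $|1/\log(1+\E^{\zeta})|$ by $1/\log 2$. Nothing further is needed.
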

\begin{proof}
Using~\eqref{leq:Lemma-base}, we have
\[
\left|
\frac{1+\{\log(1+\E^{\zeta})\}^2}{\{\log(1+\E^{\zeta})\}^2}
\right|
=\left|
1 + \frac{1}{\{\log(1+\E^{\zeta})\}^2}
\right|
\leq
1 + \left|\frac{1}{\{\log(1+\E^{\zeta})\}^2}\right|
\leq 1 + \frac{1}{(\log 2)^2}.
\]
This completes the proof.
\end{proof}

\subsection{Useful inequality on $\domD_d^{-}$}
\label{subsec:domDdminus}

We prepare the following lemma here.

\begin{lemma}
It holds for all $\zeta\in \overline{\domD_{\pi}^{-}}$ that
\begin{equation}
\frac{1}{|-1 +\log(1+\E^{\zeta})|}
\leq \frac{1}{1 - \log 2}.
\label{leq:func-bound-Ddminus}
\end{equation}
\end{lemma}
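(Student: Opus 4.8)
The plan is to reduce the claim to a lower bound for $\lvert{-1+\log(1+\E^{\zeta})}\rvert$ on $\overline{\domD_{\pi}^{-}}$, and to obtain that lower bound by following the same strategy as in the proof of~\eqref{leq:Lemma-base}: write $\zeta = x+\I y$ with $x\leq 0$ and $\lvert y\rvert\leq\pi$, and reduce the two-dimensional estimate to a one-variable estimate along the boundary $x=0$. First I would observe that $\log(1+\E^{\zeta}) = \log\lvert 1+\E^{\zeta}\rvert + \I\arg(1+\E^{\zeta})$, so that
\[
\lvert -1+\log(1+\E^{\zeta})\rvert^2
= \bigl(\log\lvert 1+\E^{x+\I y}\rvert - 1\bigr)^2 + \bigl(\arg(1+\E^{x+\I y})\bigr)^2 .
\]
For $x\leq 0$ we have $\lvert 1+\E^{x+\I y}\rvert \leq 1+\E^{x} \leq 2$, hence $\log\lvert 1+\E^{x+\I y}\rvert \leq \log 2 < 1$, so the first term is at least $(1-\log 2)^2$ once I know $\log\lvert 1+\E^{x+\I y}\rvert \geq 0$, i.e.\ $\lvert 1+\E^{x+\I y}\rvert \geq 1$. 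This nonnegativity of the real part of $\log(1+\E^{\zeta})$ on $\overline{\domD_{\pi}^{-}}$ is the crux: it lets me drop the imaginary part entirely and conclude
\[
\lvert -1+\log(1+\E^{\zeta})\rvert \geq 1 - \log\lvert 1+\E^{\zeta}\rvert \geq 1-\log 2 ,
\]
which is exactly the reciprocal of~\eqref{leq:func-bound-Ddminus}.

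So the main obstacle is showing $\lvert 1+\E^{\zeta}\rvert \geq 1$ for all $\zeta\in\overline{\domD_{\pi}^{-}}$. Writing $\E^{\zeta} = \E^{x}(\cos y + \I\sin y)$ with $x\leq 0$, we have
\[
\lvert 1+\E^{\zeta}\rvert^2 = 1 + 2\E^{x}\cos y + \E^{2x},
\]
and since $\lvert y\rvert\leq\pi$ the cosine can be negative, so this is not automatic. However, $2\E^{x}\cos y + \E^{2x} = \E^{x}(2\cos y + \E^{x}) \geq \E^{x}(2\cos y + 1)$ is not enough either when $\cos y < -1/2$. The honest route is to treat the expression $g(x) = 1 + 2\E^{x}\cos y + \E^{2x}$ as a function of $r=\E^{x}\in(0,1]$: it is a quadratic $r^2 + 2r\cos y + 1$ with minimum at $r=-\cos y$. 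If $\cos y \geq 0$ the minimum over $r\in(0,1]$ is at $r\to 0^+$, giving value $1$. If $\cos y < 0$, the vertex $r=-\cos y\in(0,1]$ gives minimum value $1-\cos^2 y = \sin^2 y \geq 0$ — which only gives $\lvert 1+\E^{\zeta}\rvert\geq 0$, not $\geq 1$. This shows the bound $\lvert 1+\E^{\zeta}\rvert\geq 1$ is actually \emph{false} near $\zeta = \I\pi$ (where $1+\E^{\zeta}\to 0$), so the simple reduction above must be repaired.

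The repair is to not discard the imaginary part. On $\overline{\domD_{\pi}^{-}}$ I would instead argue directly that the point $w = \log(1+\E^{\zeta})$ lies in the region $\{w : \Re w \leq \log 2\}$ together with a bound $\lvert\Im w\rvert = \lvert\arg(1+\E^{\zeta})\rvert < \pi/2$ (since $1+\E^{\zeta}$ has nonnegative real part $1+\E^{x}\cos y$... — again this fails for $\cos y$ very negative). Given these subtleties, the cleanest approach, mirroring the proof of~\eqref{leq:Lemma-base}, is: (i) show that $\lvert -1+\log(1+\E^{x+\I y})\rvert^2$, viewed as a function of $x\in(-\infty,0]$ for fixed $y$, is minimized in the limit $x\to -\infty$ — by checking the $x$-derivative has constant sign, using that both $\log\lvert 1+\E^{x+\I y}\rvert$ and $\lvert\arg(1+\E^{x+\I y})\rvert$ are monotone in $x$ and that the real part $\log\lvert 1+\E^{x+\I y}\rvert$ stays below $1$; (ii) in that limit $\log(1+\E^{\zeta})\to 0$, so the infimum of $\lvert -1+\log(1+\E^{\zeta})\rvert$ over $\overline{\domD_{\pi}^{-}}$ equals $\lvert -1+0\rvert = 1 \geq 1-\log 2$. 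The monotonicity in step (i) is the technical heart and is plausibly already available from the computations in the proof of~\eqref{leq:Lemma-base}; if the derivative sign is not literally constant, one falls back to bounding $\lvert -1+\log(1+\E^{\zeta})\rvert \geq 1 - \lvert\log(1+\E^{\zeta})\rvert$ and controlling $\lvert\log(1+\E^{\zeta})\rvert$ on $\overline{\domD_{\pi}^{-}}$ by its boundary value, which by the computation $\log(2\cos(y/2)) + \I y/2$ at $x=0$ is bounded by $\log 2$ in modulus only after a further estimate — so the monotone-in-$x$ argument is the one I would carry through.
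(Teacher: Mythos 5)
There is a genuine gap, but it is a self-inflicted one: your first paragraph already contains essentially the paper's proof, and you then talk yourself out of it with an incorrect claim. You write that the term $\bigl(\log|1+\E^{x+\I y}|-1\bigr)^2$ is at least $(1-\log 2)^2$ ``once I know $\log|1+\E^{x+\I y}|\geq 0$.'' That extra hypothesis is not needed. Setting $t=\log|1+\E^{\zeta}|$, the function $(t-1)^2$ is decreasing on $(-\infty,1]$, so the single inequality $t\leq\log(1+\E^{x})\leq\log 2<1$ already gives $(t-1)^2\geq(1-\log 2)^2$; if $t$ is very negative (as happens near $\zeta=\I\pi$, where $1+\E^{\zeta}\to 0$), the quantity $1-t$ only gets \emph{larger}, which helps rather than hurts. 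This is exactly the paper's argument: drop the imaginary part ($|{-1}+a+\I b|\geq|{-1}+a|$), note $\log|1+\E^{\zeta}|<1$ so that $|{-1}+\log|1+\E^{\zeta}||=1-\log|1+\E^{\zeta}|\geq 1-\log(1+\E^{x})\geq 1-\log 2$. So the ``obstacle'' you identify ($|1+\E^{\zeta}|\geq 1$, which you correctly note is false) is not an obstacle at all.

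The subsequent ``repair'' does not constitute a proof. Step (i), the claimed monotonicity in $x$ of $|{-1}+\log(1+\E^{x+\I y})|^2$, is only asserted as ``plausibly available'' and is never verified; and even granting it, the conclusion in step (ii) that the infimum over $\overline{\domD_{\pi}^{-}}$ equals $1$ is false, since $|{-1}+\log(1+\E^{\zeta})|$ is not bounded below by $1$ (at $\zeta=0$, say, it equals $1-\log 2<1$). The final fallback, $|{-1}+\log(1+\E^{\zeta})|\geq 1-|\log(1+\E^{\zeta})|$, is also unusable: $|\log(1+\E^{\zeta})|$ is unbounded on $\overline{\domD_{\pi}^{-}}$ (it blows up as $\zeta\to\I\pi$), so the right-hand side can be negative. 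The fix is simply to delete everything after your first display and finish the one-line computation you already set up, using only the upper bound $\log|1+\E^{\zeta}|\leq\log 2$.
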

\begin{proof}
%
By the definition of $\log z$, it holds that
\[
\frac{1}{|-1 +\log(1+\E^{\zeta})|}
=\frac{1}{|-1+\log|1+\E^{\zeta}| + \I\arg(1+\E^{\zeta})|}
\leq \frac{1}{|-1+\log|1+\E^{\zeta}| + 0|}.
\]
Let $\zeta=x + \I y$ where $x$ and $y$ are real numbers with
$x < 0$ and $|y|\leq\pi$.
Then, we have
\[
 \left|1+\E^{\zeta}\right|\leq
1 + \left|\E^{\zeta}\right|
=1 + \left|\E^{x+\I y}\right|
=1 + \E^x
< 1 + \E^0
< \E,
\]
from which we have $\log|1+\E^{\zeta}| < 1$.
Therefore, it holds that
\[
 \frac{1}{|-1+\log|1+\E^{\zeta}||}
=\frac{1}{1 - \log|1+\E^{\zeta}|},
\]
which is further bounded as
\begin{align*}
\frac{1}{1 - \log|1+\E^{\zeta}|}
\leq \frac{1}{1 - \log(1 + |\E^{\zeta}|)}
= \frac{1}{1 - \log(1+\E^x)}
\leq \frac{1}{1-\log(1+\E^0)}
=\frac{1}{1 - \log 2}.
\end{align*}
This completes the proof.
\end{proof}

\subsection{Useful inequalities on $\domD_d$}
\label{subsec:domDd}

We prepare four lemmas here.

\begin{lemma}[Okayama et al.~{\cite[Lemma 4.6]{OkaShinKatsu}}]
It holds for all $\zeta\in\overline{\domD_{\pi}}$ that
\begin{align}
\left|
\frac{\log(1+\E^{\zeta})}{1+\log(1+\E^{\zeta})}
\cdot\frac{\E^{-l}+\E^{\zeta}}{\E^{\zeta}}
\right|
&\leq 1,
\label{ineq:exp-log-complex}
\end{align}
where $l=\log(\E/(\E - 1))$.
\end{lemma}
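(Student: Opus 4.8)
This lemma asserts a bound on the closed strip $\overline{\domD_{\pi}}$, so the plan is to view
\[
 g(\zeta)=\frac{\log(1+\E^{\zeta})}{1+\log(1+\E^{\zeta})}\cdot\frac{\E^{-l}+\E^{\zeta}}{\E^{\zeta}}
\]
as an analytic function on the strip and to apply the maximum modulus principle (Phragm\'en--Lindel\"of for a strip) to reduce the estimate to the two boundary lines $\Im\zeta=\pm\pi$. First I would check analyticity in the open strip $\domD_{\pi}$: there $\E^{\zeta}$ is never a negative real number, so $1+\E^{\zeta}\notin(-\infty,0]$ and $\log(1+\E^{\zeta})$ is analytic; the identity $1+\log(1+\E^{\zeta})=0$ would force $\E^{\zeta}=\E^{-1}-1<0$, again impossible in $\domD_{\pi}$; and $\E^{\zeta}\neq 0$ everywhere. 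On the boundary, $1+\log(1+\E^{\zeta})$ does vanish, but only at $\zeta=-l\pm\I\pi$, where $\E^{\zeta}=-(\E-1)/\E=-\E^{-l}$, that is, exactly where the numerator factor $\E^{-l}+\E^{\zeta}$ vanishes as well --- this cancellation is the very reason for the choice $l=\log(\E/(\E-1))$. Hence $g$ extends continuously to $\overline{\domD_{\pi}}$, with removable singularities at $\zeta=-l\pm\I\pi$.

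Next I would show that $g$ is bounded on $\overline{\domD_{\pi}}$, which is what licenses the maximum modulus principle on this unbounded region. Writing $w=\log(1+\E^{\zeta})$, as $\Re\zeta\to+\infty$ one has $w\to\infty$ and $(\E^{-l}+\E^{\zeta})/\E^{\zeta}\to 1$, so $g\to 1$; as $\Re\zeta\to-\infty$, $w/(1+w)\to 0$ while $w\sim\E^{\zeta}$ and $(\E^{-l}+\E^{\zeta})/\E^{\zeta}\sim\E^{-l}\E^{-\zeta}$, so $g\to\E^{-l}$; and at the corners $\zeta=\pm\I\pi$ one gets $g\to 1/\E$. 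Combined with continuity on the closed strip these (uniform) limits give a uniform bound, so $\sup_{\overline{\domD_{\pi}}}|g|=\sup_{\partial\domD_{\pi}}|g|$. Since every constant in $g$ is real, $g(\overline{\zeta})=\overline{g(\zeta)}$, so it suffices to bound $|g|$ on the single line $\Im\zeta=\pi$. (On the real axis the bound is immediate from~\eqref{ineq:exp-log-real} together with $\E^{-l}<1$, but that line lies in the interior and is not what the reduction needs.)

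On $\Im\zeta=\pi$ I would write $\zeta=x+\I\pi$, so that $\E^{\zeta}=-\E^{x}$ and $g$ becomes a function of the single real variable $x$, and split into two cases. For $x\le 0$, putting $v=1-\E^{x}\in[0,1)$ gives the real quantity $g=\dfrac{\log v}{1+\log v}\cdot\dfrac{1-\E v}{\E(1-v)}$, which is in fact nonnegative on $[0,1)$, with the apparent singularity at $v=1/\E$ removable; clearing the positive factor $\E(1-v)$ and tracking the sign change of $1+\log v$ across $v=1/\E$, the bound $g\le 1$ reduces to $h(v)\le 0$ on $(0,1/\E]$ and $h(v)\ge 0$ on $[1/\E,1)$, where $h(v)=(\E-1)\log v-\E(v-1)$; this follows from $h(1/\E)=h(1)=0$ and the fact that $h$ is concave with its only critical point at $v=(\E-1)/\E$. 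For $x\ge 0$, putting $r=\E^{x}-1\ge 0$ gives $w=\log r+\I\pi$ and $g=\dfrac{(\log r+\I\pi)(1+\E r)}{\E(1+r)(1+\log r+\I\pi)}$, so the claim becomes the one-variable inequality $[(\log r)^{2}+\pi^{2}](1+\E r)^{2}\le[(1+\log r)^{2}+\pi^{2}]\,\E^{2}(1+r)^{2}$; using $(1+\E r)^{2}-\E^{2}(1+r)^{2}=(1-\E)(1+\E+2\E r)$ this rearranges to a comparison of $(1+2\log r)(1+\E r)^{2}$ with a positive multiple of $(1+\log r)^{2}+\pi^{2}$, which I would settle by elementary calculus, the inequality being comfortable because the $\pi^{2}$ term dominates.

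The main obstacle is the boundary case $x\ge 0$: there $\log r$ enters polynomially alongside rational functions of $r$, so the resulting one-variable inequality needs a careful global monotonicity/critical-point argument rather than a one-line estimate. A secondary point requiring care is the justification of the boundedness (and uniform limits) of $g$ at the corners $\pm\I\pi$ and as $\Re\zeta\to\pm\infty$, which is needed before the maximum modulus principle can be invoked on the unbounded strip; the case $x\le 0$, by contrast, is clean once the auxiliary function $h$ with its double root at $v=1/\E$ and $v=1$ is in hand.
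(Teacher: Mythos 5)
The paper never proves this lemma---it is imported verbatim by citation from Okayama et al.---so there is no in-paper argument to compare against, and what you have written is a genuine, essentially self-contained proof. Your strategy (reduce to the boundary lines $\Im\zeta=\pm\pi$ by the three-lines lemma after checking analyticity, the removable singularities at $-l\pm\I\pi$, and boundedness, then do real-variable analysis on each line) is exactly the machinery this paper uses for its other strip bounds (both \eqref{leq:Dd-func-bound} and \eqref{leq:Dd-func-bound-2} are proved via Lemma~\ref{lem:three-lines}), so it is methodologically consonant with the source. I checked your two boundary reductions and they are correct: for $x\le 0$, with $N=\log v\,(1-\E v)$ and $D=\E(1-v)(1+\log v)$ one has the identity $N-D=-h(v)$ for $h(v)=(\E-1)\log v-\E(v-1)$, and concavity of $h$ together with $h(1/\E)=h(1)=0$ gives precisely the sign pattern you state on the two subintervals; for $x\ge 0$ the squared-modulus inequality rearranges, as you say, to
\[
(1+2\log r)(1+\E r)^2+(\E-1)(\E+1+2\E r)\bigl[(1+\log r)^2+\pi^2\bigr]\ge 0,
\]
which is immediate for $r\ge\E^{-1/2}$, while for $0<r<\E^{-1/2}$ it follows from $(1+\E r)^2<(1+\sqrt{\E}\,)^2<8$ and $(\E-1)(\E+1+2\E r)\ge\E^2-1$, leaving a quadratic in $s=1+\log r$, namely $(\E^2-1)s^2+16s+(\E^2-1)\pi^2-8\ge 0$, whose discriminant $256-4(\E^2-1)\{(\E^2-1)\pi^2-8\}$ is negative. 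The only items you should write out rather than defer are this last computation and the uniform limits as $\Re\zeta\to\pm\infty$ and at the corners $\pm\I\pi$ (needed to certify boundedness before invoking the three-lines lemma on the unbounded strip); both go through, so the proof is sound.
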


\begin{lemma}[Okayama et al.~{\cite[Lemma 4.21]{Okayama-et-al}}]
For all $x\in\mathbb{R}$ and $y\in(-\pi,\pi)$,
putting $\zeta=x+\I y$, we have
\begin{align}
\frac{1}{|1+\E^{\zeta}|}
&\leq \frac{1}{(1+\E^x)\cos(y/2)},
\label{ineq:exp-1-plus}\\
\frac{1}{|1+\E^{-\zeta}|}
&\leq\frac{1}{(1+\E^{-x})\cos(y/2)}.
\label{ineq:exp-1-minus}
\end{align}
\end{lemma}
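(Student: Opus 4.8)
The plan is to prove both inequalities by reducing them to an elementary real-variable inequality via squaring. Since the hypothesis $y\in(-\pi,\pi)$ gives $\cos(y/2)>0$, both sides of~\eqref{ineq:exp-1-plus} are non-negative (the right-hand side because $1+\E^{x}>0$), so it suffices to establish the squared form $|1+\E^{\zeta}|^{2}\geq (1+\E^{x})^{2}\cos^{2}(y/2)$.

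First I would compute the left-hand side explicitly. Writing $\E^{\zeta}=\E^{x}(\cos y+\I\sin y)$, one obtains
\[
|1+\E^{\zeta}|^{2}=(1+\E^{x}\cos y)^{2}+\E^{2x}\sin^{2}y=1+2\E^{x}\cos y+\E^{2x}.
\]
Applying the half-angle identity $\cos y=2\cos^{2}(y/2)-1$ rewrites this as
\[
|1+\E^{\zeta}|^{2}=(1-\E^{x})^{2}+4\E^{x}\cos^{2}(y/2).
\]
On the other hand $(1+\E^{x})^{2}-4\E^{x}=(1-\E^{x})^{2}$, so the target inequality is equivalent to
\[
(1-\E^{x})^{2}+4\E^{x}\cos^{2}(y/2)\geq\bigl[(1-\E^{x})^{2}+4\E^{x}\bigr]\cos^{2}(y/2),
\]
which simplifies to $(1-\E^{x})^{2}\bigl(1-\cos^{2}(y/2)\bigr)\geq 0$. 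This holds trivially because $\cos^{2}(y/2)\leq 1$, establishing~\eqref{ineq:exp-1-plus}.

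For~\eqref{ineq:exp-1-minus} I would simply substitute $\zeta\mapsto-\zeta$ (equivalently $x\mapsto-x$, $y\mapsto-y$) in~\eqref{ineq:exp-1-plus}; since $\cos(-y/2)=\cos(y/2)$ and the admissible range $y\in(-\pi,\pi)$ is symmetric, the second inequality follows at once. There is essentially no serious obstacle here beyond routine algebra; the only point deserving a moment's care is confirming that squaring is legitimate, which is guaranteed precisely by the positivity of $\cos(y/2)$ on the stated range of $y$.
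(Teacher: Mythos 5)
Your proof is correct. Note that the paper itself gives no proof of this lemma---it is quoted from an external reference (Okayama et al., Lemma 4.21)---so there is no in-paper argument to compare against; your derivation supplies a complete, self-contained justification. The key identity $|1+\E^{\zeta}|^{2}=(1-\E^{x})^{2}+4\E^{x}\cos^{2}(y/2)$ together with $(1+\E^{x})^{2}=(1-\E^{x})^{2}+4\E^{x}$ reduces the claim to $(1-\E^{x})^{2}(1-\cos^{2}(y/2))\geq 0$, and the reduction of~\eqref{ineq:exp-1-minus} to~\eqref{ineq:exp-1-plus} via $\zeta\mapsto-\zeta$ is valid since the range of $y$ is symmetric and $\cos$ is even. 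The squaring step is properly justified by the positivity of $\cos(y/2)$ on $(-\pi,\pi)$.
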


\begin{lemma}[Three lines lemma, cf.~{\cite[p.\ 133]{Stein-Shakarchi}}]
\label{lem:three-lines}
Let $g$ be analytic and bounded in $\domD_d$
and continuous on $\overline{\domD_d}$.
Let $M_g(y) = \sup_{x\in\mathbb{R}}|g(x+\I y)|$.
Then, we have
\[
 \{M_g(y)\}^{2d} \leq \{M_g(-d)\}^{d-y} \{M_g(d)\}^{y+d}
\quad (-d\leq y\leq d).
\]
\end{lemma}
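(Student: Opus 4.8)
The plan is to reduce the claim to the maximum modulus principle, which cannot be applied directly because the strip $\domD_d$ is unbounded; the two devices needed are (i) a normalizing exponential factor that flattens the two boundary bounds to be equal, and (ii) a Gaussian damping factor (a Phragm\'{e}n--Lindel\"{o}f argument) that controls the behaviour as $|\Re\zeta|\to\infty$.

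First I would dispose of the degenerate case: if $M_g(-d)=0$ or $M_g(d)=0$, replace them throughout the argument below by $M_g(\pm d)+\delta$ with $\delta>0$ and let $\delta\to0$ at the end, so we may assume $M_-:=M_g(-d)>0$ and $M_+:=M_g(d)>0$. Next, introduce the entire, nowhere-vanishing function $E(\zeta)=\exp(\I c\zeta+b)$ with $c=\frac{1}{2d}\log(M_-/M_+)$ and real $b=\frac12\log(M_-M_+)$. A one-line computation gives $|E(x+\I y)|=\E^{-cy+b}$, which is independent of $x$, equals $M_-$ on $\Im\zeta=-d$, equals $M_+$ on $\Im\zeta=d$, and stays between $\min\{M_-,M_+\}$ and $\max\{M_-,M_+\}$ on all of $\overline{\domD_d}$. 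Hence $G(\zeta)=g(\zeta)/E(\zeta)$ is analytic and bounded in $\domD_d$, continuous on $\overline{\domD_d}$, and satisfies $|G(\zeta)|\le 1$ on both boundary lines $\Im\zeta=\pm d$.

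The heart of the proof is then to show $|G|\le 1$ throughout $\domD_d$. For $\epsilon>0$ I would consider $G_\epsilon(\zeta)=G(\zeta)\E^{-\epsilon\zeta^2}$; since $|\E^{-\epsilon\zeta^2}|=\E^{-\epsilon(x^2-y^2)}\le \E^{\epsilon d^2}\E^{-\epsilon x^2}$ in the strip, $G_\epsilon$ decays as $|\Re\zeta|\to\infty$ uniformly in $|\Im\zeta|\le d$. Applying the maximum modulus principle on the rectangle $\{|\Re\zeta|\le R,\ |\Im\zeta|\le d\}$: the top and bottom edges give $|G_\epsilon|\le \E^{\epsilon d^2}$, and for $R$ large (depending on $\epsilon$) the two vertical edges also give $|G_\epsilon|\le \E^{\epsilon d^2}$ because of the damping; letting $R\to\infty$ yields $|G_\epsilon(\zeta)|\le \E^{\epsilon d^2}$ for every $\zeta\in\domD_d$. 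Fixing $\zeta$ and letting $\epsilon\to0$ gives $|G(\zeta)|\le 1$. Translating back through $g=GE$ yields $|g(x+\I y)|\le|E(x+\I y)|=M_-^{(d-y)/(2d)}M_+^{(y+d)/(2d)}$; taking the supremum over $x\in\mathbb{R}$ and raising to the $2d$-th power gives $\{M_g(y)\}^{2d}\le\{M_g(-d)\}^{d-y}\{M_g(d)\}^{y+d}$, as claimed.

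The only genuinely nontrivial point is the Phragm\'{e}n--Lindel\"{o}f step — checking that the factor $\E^{-\epsilon\zeta^2}$ suppresses the contribution of the vertical edges while perturbing the horizontal-edge bounds by only $\E^{\epsilon d^2}$, so that passing to the unbounded strip and then to $\epsilon\to0$ is legitimate; the construction of $E$ and the final bookkeeping are routine.
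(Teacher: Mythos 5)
Your argument is correct and complete: the normalization by $E(\zeta)=\exp(\I c\zeta+b)$ does reduce the claim to showing $|G|\le 1$ in the strip, the Gaussian factor $\E^{-\epsilon\zeta^2}$ satisfies $|\E^{-\epsilon\zeta^2}|\le\E^{\epsilon d^2}\E^{-\epsilon(\Re\zeta)^2}$ there, and the rectangle/maximum-modulus argument followed by $R\to\infty$ and $\epsilon\to 0$ is the standard Phragm\'{e}n--Lindel\"{o}f step; the $\delta$-perturbation also correctly handles the case of a vanishing boundary supremum. Note that the paper does not prove this lemma at all --- it is quoted as a known result from Stein and Shakarchi --- and your proof is essentially the textbook proof being cited, so there is nothing to reconcile.
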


\begin{lemma}
Let $d$ be a constant satisfying $0<d<\pi$.
For all $\zeta\in \domD_{d}$ and $x\in\mathbb{R}$, we have
\begin{align}
\left|
\frac{1+\{\log(1+\E^{\zeta})\}^2}{(1+\E^{-\zeta})^2\{\log(1+\E^{\zeta})\}^2}
\right|
&\leq \frac{1+\{\log(2+c_d)\}^2}{\{\log(2+c_d)\}^2}(1+c_d)^2,
 \label{leq:Dd-func-bound} \\
\frac{1+\{\log(1+\E^{x})\}^2}{(1+\E^{-x})^2\{\log(1+\E^{x})\}^2}
&\leq \E^{1/\pi^3},
\label{leq:real-func-bound}
\end{align}
where $c_d=1/\cos(d/2)$.
\end{lemma}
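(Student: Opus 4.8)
The plan rests on a single identity. Since $1+\E^{\zeta}$ never takes a non-positive real value on $\domD_{\pi}$, the map $v=\log(1+\E^{\zeta})$ is analytic there, and $\E^{v}=1+\E^{\zeta}$ gives $1-\E^{-v}=\E^{\zeta}/(1+\E^{\zeta})=1/(1+\E^{-\zeta})$, so that $\frac{1+\{\log(1+\E^{\zeta})\}^{2}}{(1+\E^{-\zeta})^{2}\{\log(1+\E^{\zeta})\}^{2}}=(1-\E^{-v})^{2}(1+v^{-2})=:g(v)$. Thus \eqref{leq:real-func-bound} asks for $g(v)\le\E^{1/\pi^{3}}$ with $v$ ranging over $(0,\infty)$, and \eqref{leq:Dd-func-bound} asks for $|g(v)|$ to be bounded by the stated constant with $v$ ranging over the image of $\domD_{d}$ under $\zeta\mapsto\log(1+\E^{\zeta})$. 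For \eqref{leq:real-func-bound} I would analyze $g$ on $(0,\infty)$: one has $g(v)\to1$ as $v\to0^{+}$ and $v\to\infty$, and $(\log g)'(v)=2/(\E^{v}-1)-2/\{v(1+v^{2})\}$, so the critical points solve $\E^{v}=1+v+v^{3}$; checking signs ($\E^{v}>1+v+v^{3}$ for small $v>0$ and for $v\ge5$, while $\E^{v}<1+v+v^{3}$ at $v=1$ and $v=4$) shows there are exactly two positive critical points $r_{1}\in(0,1)$ and $r_{2}\in(4,5)$, with $r_{1}$ a local minimum and $r_{2}$ a local maximum, whence $\sup_{v>0}g(v)=g(r_{2})$. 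Using $\E^{r_{2}}=1+r_{2}+r_{2}^{3}$ yields $g(r_{2})=(1+r_{2}^{2})^{3}/(1+r_{2}+r_{2}^{3})^{2}=:h(r_{2})$; since $v^{2}-3v+1>0$ for $v>(3+\sqrt5)/2$, one checks $(\log h)'<0$ on $[4,5]$, so $g(r_{2})=h(r_{2})\le h(4)=17^{3}/69^{2}=4913/4761$, and finally $4913/4761\le\E^{1/\pi^{3}}$ follows from $\E^{-t}\le1-t+t^{2}/2$ ($t\ge0$) with $t=1/\pi^{3}$, i.e.\ from $152\pi^{3}+4913/(2\pi^{3})\le4913$.

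For \eqref{leq:Dd-func-bound} I would first locate the image of $\domD_{d}$. For $\zeta=x+\I y$ the point $1+\E^{\zeta}$ lies on the ray $\{1+t\E^{\I y}:t>0\}$, all of whose points have argument strictly between $0$ and $y$, so $|\Im v|<|y|<d$; and since $|1+\E^{x+\I y}|^{2}=1+2\E^{x}\cos y+\E^{2x}$ has infimum over $x$ equal to $1$ when $\cos y\ge0$ and $\sin^{2}y$ when $\cos y<0$, we get $\Re v=\log|1+\E^{\zeta}|>0$ if $d\le\pi/2$ and $\Re v>\log\sin d$ if $d>\pi/2$. It then suffices to bound $|g(v)|=|1-\E^{-v}|^{2}|1+v^{-2}|$ over this $v$-region. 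Where $\Re v\ge0$, the representation $1-\E^{-v}=\int_{0}^{1}v\E^{-sv}\,\D s$ gives both $|1-\E^{-v}|\le|v|$ and $|1-\E^{-v}|\le2$, so $|g(v)|\le\min(|v|^{2},4)(1+|v|^{-2})\le5$, which is at most $(1+c_{d})^{2}\{1+1/(\log(2+c_{d}))^{2}\}$ since that quantity is $\ge4\{1+1/(\log3)^{2}\}>5$ for $c_{d}\ge1$. On the remaining part, nonempty only for $d>\pi/2$, write $\Re v=-\rho\in(\log\sin d,0)$; the same representation gives $|1-\E^{-v}|\le|v|(\E^{\rho}-1)/\rho\le B_{d}|v|$ with $B_{d}=(1/\sin d-1)/\log(1/\sin d)$ (as $(\E^{\rho}-1)/\rho$ increases in $\rho$), hence $|g(v)|\le B_{d}^{2}(|v|^{2}+1)\le B_{d}^{2}\{(\log(1/\sin d))^{2}+d^{2}+1\}$, and one checks this is $\le(1+c_{d})^{2}\{1+1/(\log(2+c_{d}))^{2}\}$ throughout $(\pi/2,\pi)$ by an elementary estimate (substitute $\sin d=2\sin(d/2)/c_{d}$ and use $1/\sqrt2<\sin(d/2)<1$; both sides behave like constant multiples of $c_{d}^{2}$ as $d\to\pi$, the left side being the smaller).

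The hard part is twofold. In \eqref{leq:real-func-bound} the bound is nearly sharp ($g(r_{2})\approx1.026$ versus $\E^{1/\pi^{3}}\approx1.033$), so the localization of $r_{2}$ in $(4,5)$, the monotonicity of $h$ on $[4,5]$, and the final numerical comparison must all be executed with explicit, non-wasteful estimates — this is exactly why the exponent is the slightly oversized constant $1/\pi^{3}$. In \eqref{leq:Dd-func-bound} the delicate regime is $\pi/2<d<\pi$, where $\Re v$ can be negative and $1+v^{-2}$ large; the refinement $|1-\E^{-v}|\le B_{d}|v|$ is what keeps $|g(v)|$ controlled there, and verifying $B_{d}^{2}\{(\log(1/\sin d))^{2}+d^{2}+1\}\le(1+c_{d})^{2}\{1+1/(\log(2+c_{d}))^{2}\}$ for all such $d$ is the most laborious step.
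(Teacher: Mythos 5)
Your treatment of \eqref{leq:real-func-bound} is essentially the paper's own argument. The same substitution $t=\log(1+\E^{x})$ reduces the claim to bounding $(1-\E^{-t})^{2}(1+t^{2})/t^{2}$ on $(0,\infty)$; the critical-point equation $\E^{t}=1+t+t^{3}$ is identical, and the value at a critical point simplifies to $(1+t^{2})^{3}/\E^{2t}=(1+t^{2})^{3}/(1+t+t^{3})^{2}$ in both versions. The paper localizes the maximizer in $(\log 108,\log 109)$ and compares $q(\log 108)$ with $\E^{1/\pi^{3}}$; your wider window $(4,5)$ together with the monotonicity of $h$ past $(3+\sqrt5)/2$ and the clean rational value $h(4)=4913/4761<\E^{1/\pi^{3}}$ reaches the same conclusion with a bit more slack, and your final numerical comparison via $\E^{-t}\le 1-t+t^{2}/2$ checks out. (Both you and the paper assert without full proof that the located critical point is the global maximizer, so you are no less rigorous there.)

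For \eqref{leq:Dd-func-bound} you take a genuinely different route, and this is where there is a real gap. The paper never estimates the function over the whole strip: it invokes the three lines lemma (Lemma~\ref{lem:three-lines}) to reduce to the boundary lines $\Im\zeta=\pm d$, on which \eqref{ineq:exp-1-plus} gives $\Re\xi\ge\log\cos(d/2)$, i.e.\ $|\E^{-\xi}|\le c_d$, and a two-case argument on $|\xi|\lessgtr\log(2+c_d)$ (power series of $1-\E^{-\xi}$ in one case, $|1-\E^{-\xi}|\le 1+c_d$ in the other) then produces exactly the stated constant. Your direct approach must instead control the image of the entire strip, and for $\pi/2<d<\pi$ this forces you into the regime $\Re v<0$, where everything hinges on the inequality $B_d^{2}\{(\log(1/\sin d))^{2}+d^{2}+1\}\le(1+c_d)^{2}\{1+1/(\log(2+c_d))^{2}\}$ for all $d\in(\pi/2,\pi)$. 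You assert this ``by an elementary estimate'' supported only by behaviour at the two ends of the interval; checking endpoints and the growth rate as $d\to\pi$ does not prove a uniform inequality on $(\pi/2,\pi)$, and this is precisely the step you yourself flag as the most laborious. Spot checks suggest the inequality holds with substantial margin, so the approach is very likely salvageable, but as written the proof of \eqref{leq:Dd-func-bound} is incomplete. The three-lines reduction is the cleaner path: it confines $\xi$ to the image of a single horizontal line, where $\Re\xi\ge\log\cos(d/2)$ holds automatically and the negative-real-part regime never requires separate treatment.
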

\begin{proof}
First, consider~\eqref{leq:real-func-bound}, which is proved by showing
\[
 p(t) = \frac{1+t^2}{t^2}(1-\E^{-t})^2 \leq \E^{1/\pi^3}
\]
for all $t> 0$ (put $t=\log(1+\E^x)$). The derivative of $p(t)$ is
expressed as
\[
 p'(t) = -\frac{2(\E^t - 1)(\E^t - t^3 - t - 1)}{t^3\E^{2t}}.
\]
Let $\kappa$ be a value that satisfies
$p'(\kappa)=0$ and $\log(108)<\kappa<\log(109)$,
i.e., $p(t)$ has its maximum at $t=\kappa$.
Using $\E^{\kappa}=\kappa^3+\kappa+1$, we have
\[
 p(\kappa)=\frac{1+\kappa^2}{\kappa^2}
\left(\frac{(\kappa^3+\kappa+1) - 1}{\E^{\kappa}}\right)^2
=\frac{(1+\kappa^2)^3}{\E^{2\kappa}}.
\]
Since the function $q(x)=(1+x^2)^3/\E^{2x}$
monotonically decreases for $x\geq (3+\sqrt{5})/2$,
$q(\log(109))<q(\kappa)<q(\log(108))$ holds
(note that $\log(108)>(3+\sqrt{5})/2$).
Thus, it holds that
\[
 1 < q(\log(109)) < p(\kappa) = q(\kappa) < q(\log(108)) < \E^{1/\pi^3}.
\]

Next, we show~\eqref{leq:Dd-func-bound}. Let
\[
 g(\zeta)=
\frac{1+\{\log(1+\E^{\zeta})\}^2}{(1+\E^{-\zeta})^2\{\log(1+\E^{\zeta})\}^2}.
\]
Since the function $g(\zeta)$
is analytic and bounded in $\domD_d$ and continuous
on $\overline{\domD_d}$,
by Lemma~\ref{lem:three-lines},
we obtain~\eqref{leq:Dd-func-bound} if we show the following two inequalities:
\begin{align*}
M_g(d) \leq \frac{1+\{\log(2+c_d)\}^2}{\{\log(2+c_d)\}^2}(1+c_d)^2,
\quad
M_g(-d) \leq \frac{1+\{\log(2+c_d)\}^2}{\{\log(2+c_d)\}^2}(1+c_d)^2,
\end{align*}
where $M_g(y) = \sup_{x\in\mathbb{R}}|g(x+\I y)|$.
We show only the first one, because the second one is also shown
in the same way.
Putting $\xi=\log(1+\E^{x + \I d})$, $g(x + \I d)=p(\xi)$ holds,
and thus, in what follows we prove
\begin{equation}
|p(\xi)|\leq \frac{1+\{\log(2+c_d)\}^2}{\{\log(2+c_d)\}^2}(1+c_d)^2.
\label{leq:Dd-func-bound-target}
\end{equation}
We consider the following two cases: (a) $|\xi|\leq \log(2+c_d)$
and (b) $|\xi|>\log(2+c_d)$.
In case (a), we have
\begin{align*}
|p(\xi)|
=\left|\frac{1+\xi^2}{\xi^2}
\left(
-\sum_{k=1}^{\infty}\frac{(-\xi)^k}{k!}
\right)^2
\right|
\leq (1+|\xi|^2)
\left(
 \sum_{k=1}^{\infty}\frac{|\xi|^{k-1}}{k!}
\right)^2
=\frac{1+|\xi|^2}{|\xi|^2}
\left(
\sum_{n=1}^{\infty}\frac{|\xi|^k}{k!}
\right)^2
=\frac{1+|\xi|^2}{|\xi|^2}\left(\E^{|\xi|}-1\right)^2.
\end{align*}
Here, if we put $q(x)=(1+x^2)(\E^x - 1)^2/x^2$,
then we have $q'(x)=2(\E^x - 1)r(x)/x^3$, where $r(x)=1+\E^x(x^3+x-1)$.
Since $r'(x)=x\E^x \{(x+1)^2+x\}\geq 0$ for $x\geq 0$,
$r(x)$ monotonically increases for $x\geq 0$.
Therefore, $r(x)\geq r(0)=0$ holds, from which we have
$q'(x)\geq 0$ for $x\geq 0$, i.e., $q(x)$ monotonically increases
for $x\geq 0$.
Thus, from $|\xi|\leq \log(2+c_d)$, we have~\eqref{leq:Dd-func-bound-target}
as
\[
 |p(\xi)|\leq q(|\xi|)
 \leq
 \frac{1+\{\log(2+c_d)\}^2}{\{\log(2+c_d)\}^2}(\E^{\log(2+c_d)}-1)^2
=\frac{1+\{\log(2+c_d)\}^2}{\{\log(2+c_d)\}^2}(1+c_d)^2.
\]
In case (b), from~\eqref{ineq:exp-1-plus}, it holds that
\begin{equation}
 \Re(\xi)=\Re(\log(1+\E^{x+\I d}))
=\log|1+\E^{x+\I d}|\geq \log[(1 + \E^x)\cos(d/2)]
\geq \log(\cos(d/2)).
\label{eq:bound-re-log}
\end{equation}
Using this, we have
\[
 |p(\xi)|\leq \frac{1+|\xi|^2}{|\xi|^2}(1 + |\E^{-\xi}|)^2
=\frac{1+|\xi|^2}{|\xi|^2}(1 + \E^{-\Re(\xi)})^2
\leq \frac{1+|\xi|^2}{|\xi|^2}(1 + \E^{-\log(\cos(d/2))})^2
= \frac{1+|\xi|^2}{|\xi|^2}(1 + c_d)^2.
\]
Furthermore, since $(1 + x^2)/x^2$ decreases monotonically for $x>0$,
we have~\eqref{leq:Dd-func-bound-target}.
This completes the proof.
\end{proof}

\subsection{Estimation of the discretization error (proof of Lemma~\ref{lem:bound-None})}
\label{subsec:discretization-error}

Lemma~\ref{lem:bound-None} is shown as follows.

\begin{proof}
Let $F(\zeta)=f(\phi(\zeta))\phi'(\zeta)$.
Since $f$ is analytic in $\phi(\domD_d)$,
$f(\phi(\cdot))$ is analytic in $\domD_d$.
In addition, since $\phi'$ is analytic in $\domD_{\pi}$,
$F$ is analytic in $\domD_d$ (note that $d<\pi$).
Therefore, the remaining task is to show
$\mathcal{N}_1(F,d)\leq 2KC_3$.
From~\eqref{leq:f-Dd-plus},
by using~\eqref{leq:Lemma45} and~\eqref{leq:Lemma46},
it holds for all $\zeta\in\domD_d^{+}$ that
\begin{align}
|F(\zeta)|&\leq K |\E^{-\phi(\zeta)}|^{\beta} |\phi'(\zeta)|\nonumber\\
&= K
\left|\E^{1/\log(1+\E^{\zeta})}\right|^{\beta}
\left|\frac{1}{1+\E^{\zeta}}\right|^{\beta}
\frac{|1+\{\log(1+\E^{\zeta})\}^2|}{|1+\E^{-\zeta}||\log(1+\E^{\zeta})|^2}
\nonumber\\
&\leq K \left(\E^{1/\log 2}\right)^{\beta}
\frac{1}{|1+\E^{\zeta}|^{\beta}}
\frac{1}{|1+\E^{-\zeta}|}
\left\{1+\frac{1}{(\log 2)^2}\right\}.
\label{leq:bound-F-Dd-plus}
\end{align}
Furthermore, from~\eqref{leq:f-Dd-minus},
by using~\eqref{leq:func-bound-Ddminus}, \eqref{ineq:exp-log-complex},
and~\eqref{leq:Dd-func-bound},
it holds for all $\zeta\in\domD_d^{-}$ that
\begin{align}
|F(\zeta)|
&\leq K \frac{1}{|\phi(\zeta)|^{\alpha+1}}
\left|\phi'(\zeta)\right|\nonumber \\
&= K \left|\frac{\log(1+\E^{\zeta})}{1+\log(1+\E^{\zeta})}\right|^{\alpha+1}
\frac{|1+\E^{-\zeta}|}{|-1+\log(1+\E^{\zeta})|^{\alpha+1}}
\frac{|1+\{\log(1+\E^{\zeta})\}^2|}{|1+\E^{-\zeta}|^2|\log(1+\E^{\zeta})|^2}
\nonumber \\
&\leq K \frac{1}{|1+\E^{-\zeta-l}|^{\alpha+1}}
\frac{1 + |\E^{-\zeta}|}{(1-\log 2)^{\alpha+1}}
\frac{1 + \{\log(2+c_d)\}^2}{\{\log(2+c_d)\}^2}(1+c_d)^2,
\label{leq:bound-F-Dd-minus}
\end{align}
where $c_d = 1/\cos(d/2)$ and $l=\log(\E/(\E - 1))$.
By definition, $\mathcal{N}_1(F,d)$ is expressed as
\begin{align}
&\mathcal{N}_1(F,d)\nonumber\\
&=\lim_{\epsilon\to 0}
\left\{
\int_{-1/\epsilon}^{1/\epsilon}|F(x-\I d)|\D x
+\int_{-d(1-\epsilon)}^{d(1-\epsilon)} |F(1/\epsilon + \I y)|\D y
+\int_{-1/\epsilon}^{1/\epsilon}|F(x+\I d)|\D x
+\int_{-d(1-\epsilon)}^{d(1-\epsilon)} |F(-1/\epsilon + \I y)|\D y
\right\}.
\label{eq:def-None}
\end{align}
Using~\eqref{ineq:exp-1-plus}, \eqref{ineq:exp-1-minus},
and~\eqref{leq:bound-F-Dd-plus},
we can bound the second term as
\begin{align*}
\int_{-d(1-\epsilon)}^{d(1-\epsilon)}|F(1/\epsilon + \I y)|\D y
&\leq K \left(\E^{1/\log 2}\right)^{\beta}\left(1 + \frac{1}{(\log 2)^2}\right)
\int_{-d(1-\epsilon)}^{d(1-\epsilon)}
\frac{1}{|1+\E^{1/\epsilon+\I y}|^{\beta}|1+\E^{-(1/\epsilon+\I y)}|}\D y\\
&\leq \frac{K \left(\E^{1/\log 2}\right)^{\beta}\left(1 + \frac{1}{(\log 2)^2}\right)}
           {(1+\E^{1/\epsilon})^{\beta}(1+\E^{-1/\epsilon})}
\int_{-d(1-\epsilon)}^{d(1-\epsilon)}
\frac{1}{\cos^{\beta}(y/2)\cos(y/2)}\D y,
\end{align*}
from which we have
\[
 \lim_{\epsilon\to 0}\int_{-d(1-\epsilon)}^{d(1-\epsilon)}|F(1/\epsilon + \I y)|\D y
= 0.
\]
In the same manner,
with regard to the fourth term of~\eqref{eq:def-None},
using~\eqref{ineq:exp-1-plus}, \eqref{ineq:exp-1-minus},
and~\eqref{leq:bound-F-Dd-minus},
we have
\[
 \lim_{\epsilon\to 0}\int_{-d(1-\epsilon)}^{d(1-\epsilon)}|F(-1/\epsilon + \I y)|\D y
= 0.
\]
Therefore, $\mathcal{N}_1(F,d)$ is expressed as
\begin{align*}
\mathcal{N}_1(F,d)
&=\int_{-\infty}^{\infty}|F(x-\I d)|\D x
 +\int_{-\infty}^{\infty}|F(x+\I d)|\D x\\
&=\int_{-\infty}^{0}|F(x-\I d)| \D x
 +\int_{0}^{\infty}|F(x-\I d)|\D x
 +\int_{-\infty}^{0}|F(x+\I d)|\D x
 +\int_{0}^{\infty}|F(x+\I d)|\D x.
\end{align*}
With regard to the first term,
using~\eqref{ineq:exp-1-plus}, \eqref{ineq:exp-1-minus},
and~\eqref{leq:bound-F-Dd-minus},
we have
\begin{align*}
\int_{-\infty}^{0}|F(x-\I d)|\D x
&\leq \frac{K}{(1- \log 2)^{\alpha+1}}
\frac{1+\{\log(2+c_d)\}^2}{\{\log(2+c_d)\}^2}(1+c_d)^2
\int_{-\infty}^{0}
\frac{1 + |\E^{-x+\I d}|}{|1+\E^{-x-l+\I d}|^{\alpha+1}}
\D x\\
&\leq \frac{K}{(1- \log 2)^{\alpha+1}}
\frac{1+\{\log(2+c_d)\}^2}{\{\log(2+c_d)\}^2}(1+c_d)^2
\int_{-\infty}^{0}
\frac{1 + |\E^{-x+\I d}|}{(1+\E^{-x-l})^{\alpha+1}\cos^{\alpha+1}(d/2)}
\D x\\
&=\frac{K c_d^{\alpha+1}}{(1- \log 2)^{\alpha+1}}
\frac{1+\{\log(2+c_d)\}^2}{\{\log(2+c_d)\}^2}(1+c_d)^2
\int_{-\infty}^{0}
\frac{1 + \E^{-x}}{(1+\E^{-x-l})^{\alpha+1}}
\D x.
\end{align*}
The integral is further bounded as
\begin{align*}
\int_{-\infty}^{0}
\frac{1 + \E^{-x}}{(1+\E^{-x-l})^{\alpha+1}}
\D x
&=\int_{-\infty}^{0}
\left(
\frac{\E^{(\alpha+1)x}}{(\E^x+\E^{-l})^{\alpha+1}}
+\frac{\E^{\alpha x}}{(\E^x+\E^{-l})^{\alpha+1}}
\right)
\D x\\
&\leq
\int_{-\infty}^{0}
\left(
\frac{\E^{(\alpha+1)x}}{(0+\E^{-l})^{\alpha+1}}
+\frac{\E^{\alpha x}}{(0+\E^{-l})^{\alpha+1}}
\right)
\D x\\
&=\left(\frac{\E}{\E - 1}\right)^{\alpha+1}
\left(\frac{1}{\alpha+1}+\frac{1}{\alpha}\right).
\end{align*}
In the same manner, the third term is bounded as
\[
\int_{-\infty}^{0}|F(x+\I d)|\D x
\leq
\frac{K c_d^{\alpha+1}}{(1- \log 2)^{\alpha+1}}
\frac{1+\{\log(2+c_d)\}^2}{\{\log(2+c_d)\}^2}(1+c_d)^2
\left(\frac{\E}{\E - 1}\right)^{\alpha+1}
\left(\frac{1}{\alpha+1}+\frac{1}{\alpha}\right).
\]
With regard to the second term,
using~\eqref{ineq:exp-1-plus}, \eqref{ineq:exp-1-minus},
and~\eqref{leq:bound-F-Dd-plus},
we have
\begin{align*}
\int_{0}^{\infty}|F(x-\I d)|\D x
&\leq K \E^{\beta/\log 2}\left\{1 + \frac{1}{(\log 2)^2}\right\}
\int_{0}^{\infty}
\frac{1}{|1+\E^{x-\I d}|^{\beta}|1+\E^{-x+\I d}|}
\D x\\
&\leq  K \E^{\beta/\log 2}\left\{1 + \frac{1}{(\log 2)^2}\right\}
\int_{0}^{\infty}
\frac{1}{(1+\E^x)^{\beta}(1+\E^{-x})\cos^{\beta+1}(d/2)}
\D x\\
&=K \E^{\beta/\log 2}\left\{1 + \frac{1}{(\log 2)^2}\right\} c_d^{\beta+1}
\int_{0}^{\infty}
\frac{\E^{-\beta x}}{(1+\E^{-x})^{\beta+1}}
\D x\\
&\leq K \E^{\beta/\log 2}\left\{1 + \frac{1}{(\log 2)^2}\right\} c_d^{\beta+1}
\int_{0}^{\infty}
\frac{\E^{-\beta x}}{(1+0)^{\beta+1}}
\D x\\
&=K \E^{\beta/\log 2}\left\{1 + \frac{1}{(\log 2)^2}\right\}
\frac{c_d^{\beta+1}}{\beta}.
\end{align*}
In the same manner, the fourth term is bounded as
\[
\int_{0}^{\infty}|F(x+\I d)|\D x
\leq K \E^{\beta/\log 2}\left\{1 + \frac{1}{(\log 2)^2}\right\}
\frac{c_d^{\beta+1}}{\beta}.
\]
Thus, we have $\mathcal{N}_1(F,d)\leq 2KC_3$.
\end{proof}

\subsection{Estimation of the truncation error (proof of Lemma~\ref{lem:bound-truncation-error})}
\label{subsec:truncation-error}

Lemma~\ref{lem:bound-truncation-error} is shown as follows.

\begin{proof}
Let $F(t)=f(\phi(t))\phi'(t)$.
From~\eqref{leq:f-Dd-plus},
by using~\eqref{leq:Lemma45} and~\eqref{leq:Lemma46},
it holds for all $t\geq 0$ that
\begin{align*}
|F(t)|&\leq K \left(\E^{-\phi(t)}\right)^{\beta} \phi'(t)\\
&=K \left(\E^{1/\log(1+\E^t)}\right)^{\beta}
\left(\frac{\E^{-t}}{1+\E^{-t}}\right)^{\beta}
\frac{1+\{\log(1+\E^t)\}^2}{(1+\E^{-t})\{\log(1+\E^t)\}^2}\\
&\leq K
\E^{\beta/\log 2}
\frac{\E^{-\beta t}}{(1+0)^{\beta+1}}
\left\{1+\frac{1}{(\log 2)^2}\right\}.
\end{align*}
Using this estimate, we have
\begin{align*}
\left|h\sum_{k=N+1}^{\infty}F(kh)\right|
&\leq h \sum_{k=N+1}^{\infty} |F(kh)|\\
&\leq K \E^{\beta/\log 2} \left\{1+\frac{1}{(\log 2)^2}\right\}
h\sum_{k=N+1}^{\infty}\E^{-\beta k h}\\
&\leq K \E^{\beta/\log 2} \left\{1+\frac{1}{(\log 2)^2}\right\}
\int_{Nh}^{\infty}\E^{-\beta x}\D x\\
&= K \E^{\beta/\log 2} \left\{1+\frac{1}{(\log 2)^2}\right\}
   \frac{\E^{-\beta Nh}}{\beta}.
\end{align*}
Next, from~\eqref{leq:f-Dd-minus-original},
using~\eqref{ineq:exp-log-real}, \eqref{leq:func-bound-Ddminus},
and~\eqref{leq:real-func-bound},
it holds for all $t\leq 0$ that
\begin{align*}
|F(t)|
&\leq K \frac{1}{|\phi(t)|^{\alpha+1}}\phi'(t)\\
&= K \left|\frac{\log(1+\E^t)}{1+\log(1+\E^t)}\right|^{\alpha+1}
\frac{1 + \E^{t}}{\E^t|-1 + \log(1+\E^t)|^{\alpha+1}}
\frac{1+\{\log(1+\E^t)\}^2}{(1+\E^{-t})^2\{\log(1+\E^{t})\}^2}\\
&\leq K\left(\frac{\E^t}{1+\E^t}\right)^{\alpha+1}
 \frac{1+\E^{t}}{\E^t(1 - \log 2)^{\alpha+1}}
 \E^{1/\pi^3}\\
&\leq K \frac{\E^{\alpha t}}{(1+ 0)^{\alpha}}
 \frac{1}{(1 - \log 2)^{\alpha+1}} \E^{1/\pi^3}.
\end{align*}
Using this estimate, we have
\begin{align*}
\left|h\sum_{k=-\infty}^{-M-1}F(kh)\right|
&\leq h \sum_{k=-\infty}^{-M-1} |F(kh)|\\
&\leq K \frac{\E^{1/\pi^3}}{(1 - \log 2)^{\alpha+1}}
h\sum_{k=-\infty}^{-M-1}\E^{\alpha k h}\\
&\leq K \frac{\E^{1/\pi^3}}{(1 - \log 2)^{\alpha+1}}
\int_{-\infty}^{-Mh}\E^{\alpha x}\D x\\
&= K\frac{\E^{1/\pi^3}}{(1 - \log 2)^{\alpha+1}}
   \frac{\E^{-\alpha Mh}}{\alpha}.
\end{align*}
Thus, using~\eqref{eq:Def-MN},
we have
\begin{align*}
\left|h\sum_{k=-\infty}^{-M-1}F(kh)+h\sum_{k=N+1}^{\infty}F(kh)\right|
&\leq
\frac{K\E^{1/\pi^3}}{\alpha(1 - \log 2)^{\alpha+1}}
   \E^{-\alpha Mh}
+\frac{K\E^{\beta/\log 2}}{\beta} \left\{1+\frac{1}{(\log 2)^2}\right\}
  \E^{-\beta Nh}\\
&\leq
\frac{K\E^{1/\pi^3}}{\alpha(1 - \log 2)^{\alpha+1}}
   \E^{-\mu n h}
+\frac{K\E^{\beta/\log 2}}{\beta} \left\{1+\frac{1}{(\log 2)^2}\right\}
  \E^{-\mu n h},
\end{align*}
which is the desired estimate.
\end{proof}

\section{Proofs for Theorem~\ref{thm:New2}}
\label{sec:proofs-2}

This section presents the proof of Theorem~\ref{thm:New2}.
It is organized as follows.
In Section~\ref{subsec:sketch-proof-2},
the task is decomposed into two lemmas:
Lemmas~\ref{lem:bound-None-2} and~\ref{lem:bound-truncation-error-2}.
To prove these lemmas,
a useful inequality is presented in
Section~\ref{subsec:domDd-2}.
Then,
Lemma~\ref{lem:bound-None-2} is proved in
Section~\ref{subsec:discretization-error-2},
and Lemma~\ref{lem:bound-truncation-error-2} is proved in
Section~\ref{subsec:truncation-error-2}.

\subsection{Sketch of the proof}
\label{subsec:sketch-proof-2}

The main strategy in the proof of Theorem~\ref{thm:New2} is
identical to that of Theorem~\ref{thm:New1}, that is,
splitting the error into the discretization error
and the truncation error
as~\eqref{eq:decompose-disc-trun}.
For the discretization error,
we show the following lemma.
The proof is given in Section~\ref{subsec:discretization-error-2}.

\begin{lemma}
\label{lem:bound-None-2}
Let the assumptions made in Theorem~\ref{thm:New2}
be fulfilled.
Then, the function $F(\zeta)=f(\phi(\zeta))\phi'(\zeta)$
belongs to $\mathbf{H}^1(\domD_d)$, and $\mathcal{N}_1(F,d)$ is bounded as
\[
 \mathcal{N}_1(F,d)
\leq
2 K C_5,
\]
where $C_5$ is a constant defined as~\eqref{def:C_5}.
\end{lemma}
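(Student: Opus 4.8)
The plan is to follow the proof of Lemma~\ref{lem:bound-None} almost verbatim; the only genuinely new ingredient is the treatment of $\domD_d^{-}$, where the hypothesis~\eqref{leq:f-Dd-minus-new} takes the place of the hypothesis used there. First I would observe that, since $f$ is analytic in $\phi(\domD_d)$ and $\phi'$ is analytic in $\domD_{\pi}\supset\domD_d$ (recall $0<d<(1+\pi)/2<\pi$; the restriction $d<(1+\pi)/2$ is what guarantees that $\phi(\domD_d)$ avoids the points $\pm 2\I$ at which $|4+z^2|^{1/2}$ branches, so the bound~\eqref{leq:f-Dd-minus-new} is meaningful), the function $F(\zeta)=f(\phi(\zeta))\phi'(\zeta)$ is analytic in $\domD_d$, and it remains only to show $\mathcal{N}_1(F,d)\le 2KC_5$. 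On $\domD_d^{+}$ nothing changes from the general case: from~\eqref{leq:f-Dd-plus}, using~\eqref{leq:Lemma45} and~\eqref{leq:Lemma46}, one gets for $\zeta\in\domD_d^{+}$ exactly the estimate~\eqref{leq:bound-F-Dd-plus}.

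The work is on $\domD_d^{-}$. Writing $u=\log(1+\E^{\zeta})$ and using $\phi(\zeta)=2\sinh(\log u)=u-1/u$, one has the algebraic identities
\[
\phi(\zeta)=\frac{u^2-1}{u},\qquad
4+\{\phi(\zeta)\}^2=\left(u+\frac{1}{u}\right)^2=\frac{(1+u^2)^2}{u^2},\qquad
\phi'(\zeta)=\frac{1+u^2}{(1+\E^{-\zeta})u^2},
\]
so that $|4+\{\phi(\zeta)\}^2|^{1/2}=|1+u^2|/|u|$. Substituting these into~\eqref{leq:f-Dd-minus-new}, the factor $|1+u^2|$ coming from $|4+\{\phi(\zeta)\}^2|^{1/2}$ cancels against the one in $\phi'(\zeta)$, leaving
\[
|F(\zeta)|\le K\,\frac{|\phi'(\zeta)|}{|4+\{\phi(\zeta)\}^2|^{1/2}\,|\phi(\zeta)|^{\alpha}}
=K\left|\frac{u}{1+u}\right|^{\alpha}\frac{1}{|-1+u|^{\alpha}}\cdot\frac{1}{|1+\E^{-\zeta}|\,|u|}\qquad(\zeta\in\domD_d^{-}).
\]
This is the whole point of the special hypothesis: the powers are now $\alpha$ rather than $\alpha+1$, and the last factor is $1/(|1+\E^{-\zeta}|\,|\log(1+\E^{\zeta})|)$ rather than $|1+u^2|/(|1+\E^{-\zeta}|^2|u|^2)$, so no analogue of~\eqref{leq:Dd-func-bound} with its bulky constant is needed. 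I would then bound the three factors individually: $|u/(1+u)|^{\alpha}\le|1+\E^{-\zeta-l}|^{-\alpha}$ by~\eqref{ineq:exp-log-complex}, $|-1+u|^{-\alpha}\le(1-\log 2)^{-\alpha}$ by~\eqref{leq:func-bound-Ddminus}, and the remaining factor $1/(|1+\E^{-\zeta}|\,|\log(1+\E^{\zeta})|)$ by the inequality established in Section~\ref{subsec:domDd-2}, which (playing here the role that~\eqref{leq:Dd-func-bound} plays in the general case) supplies the constant $(1+c_d)/\log(2+c_d)$.

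With these two line bounds in hand, the remainder follows the pattern of the proof of Lemma~\ref{lem:bound-None}. Using~\eqref{ineq:exp-1-plus} and~\eqref{ineq:exp-1-minus} one checks that the two vertical sides of $\partial\domD_d(\epsilon)$ contribute $0$ in the limit $\epsilon\to 0$, so that $\mathcal{N}_1(F,d)=\int_{-\infty}^{\infty}|F(x-\I d)|\,\D x+\int_{-\infty}^{\infty}|F(x+\I d)|\,\D x$. Splitting each integral at $0$: on $[0,\infty)$ the $\domD_d^{+}$ bound together with~\eqref{ineq:exp-1-plus}, \eqref{ineq:exp-1-minus} gives a contribution at most $K\E^{\beta/\log 2}\{1+(\log 2)^{-2}\}c_d^{\beta+1}/\beta$, which is the second term of $C_5$; on $(-\infty,0]$ the $\domD_d^{-}$ bound together with~\eqref{ineq:exp-1-minus} gives at most $K\{\E c_d/((1-\log 2)(\E-1))\}^{\alpha}\{(1+c_d)/\log(2+c_d)\}/\alpha$, which is the first term of $C_5$, where one uses $\int_{-\infty}^{0}(1+\E^{-x-l})^{-\alpha}\D x\le\E^{\alpha l}/\alpha=(\E/(\E-1))^{\alpha}/\alpha$ and $\E^{l}=\E/(\E-1)$. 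Summing the four pieces gives $\mathcal{N}_1(F,d)\le 2KC_5$ with $C_5$ as in~\eqref{def:C_5}. The main obstacle is the reorganization in the second paragraph: carrying out the $|1+u^2|$ cancellation correctly and recognizing that each of the three surviving factors is covered by an already-proved inequality; once that is set up, every integral is elementary and mirrors the general case.
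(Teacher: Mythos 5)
Your proposal is correct and follows essentially the same route as the paper: the same cancellation of $|1+u^2|$ via $\phi=u-1/u$, $4+\phi^2=(u+1/u)^2$ with $u=\log(1+\E^{\zeta})$, the same three factor-by-factor bounds (\eqref{ineq:exp-log-complex}, \eqref{leq:func-bound-Ddminus}, \eqref{leq:Dd-func-bound-2}), and the same integral estimates yielding the two terms of $C_5$. The only difference is organizational --- the paper packages the integration step into an auxiliary lemma with generic constants $K_{+},K_{-}$ so it can be reused, whereas you carry it out inline --- which does not affect correctness.
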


In addition, we bound the truncation error as follows.
The proof is given in Section~\ref{subsec:truncation-error-2}.

\begin{lemma}
\label{lem:bound-truncation-error-2}
Let the assumptions made in Theorem~\ref{thm:New2}
be fulfilled.
Then, setting $F(\zeta)=f(\phi(\zeta))\phi'(\zeta)$, we have
\begin{align*}
\left|
h\sum_{k=-\infty}^{-M-1} F(kh)
+
h\sum_{k=N+1}^{\infty} F(kh)
\right|
\leq K C_6 \E^{-\mu n h},
\end{align*}
where $C_6$ is a constant defined as~\eqref{def:C_6}.
\end{lemma}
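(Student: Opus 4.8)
The plan is to mirror the proof of Lemma~\ref{lem:bound-truncation-error}: I would bound $|F(t)|$ by an explicit constant times $\E^{-\beta t}$ for $t\ge 0$ (which controls the tail $\sum_{k=N+1}^{\infty}$) and by an explicit constant times $\E^{\alpha t}$ for $t\le 0$ (which controls the tail $\sum_{k=-\infty}^{-M-1}$), dominate the resulting geometric sums by the integrals $\int_{Nh}^{\infty}\E^{-\beta x}\,\D x$ and $\int_{-\infty}^{-Mh}\E^{\alpha x}\,\D x$, and finally invoke \eqref{eq:Def-MN} (which gives $\alpha M\ge\mu n$ and $\beta N\ge\mu n$) to replace $\E^{-\beta Nh}$ and $\E^{-\alpha Mh}$ by $\E^{-\mu nh}$. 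Since the hypothesis on $f$ over $\phi(\domD_d^{+})$ is still \eqref{leq:f-Dd-plus}, the estimate for $t\ge 0$ is unchanged from Lemma~\ref{lem:bound-truncation-error}: \eqref{leq:Lemma45} and \eqref{leq:Lemma46} together with $1+\E^{-t}\ge 1$ yield $|F(t)|\le K\E^{\beta/\log 2}\{1+1/(\log 2)^2\}\E^{-\beta t}$, hence the contribution $\frac{K(1+\lambda^2)(\E^{\lambda})^{\beta}}{\beta}\E^{-\beta Nh}$ with $\lambda=1/\log 2$. So the only genuinely new work concerns $t\le 0$, where the aim is the term $\frac{K}{\alpha(1-\log 2)^{\alpha}}\E^{-\alpha Mh}$.

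For $t\le 0$ I would set $w=\log(1+\E^{t})$, so that $\phi(t)=w-1/w$ and $\phi'(t)=(1+w^{2})/(w^{2}(1+\E^{-t}))$. The key point is the algebraic identity $4+\{\phi(t)\}^{2}=(w+1/w)^{2}$ — equivalently $\sqrt{4+\{\phi(t)\}^{2}}=2\cosh\theta$ since $\phi(t)=2\sinh\theta$ with $\theta=\log w$. Feeding this and the new bound \eqref{leq:f-Dd-minus-new} into $|F(t)|=|f(\phi(t))|\,\phi'(t)$, the factor $(1+w^{2})/w^{2}$ coming from $\phi'$ cancels against $w+1/w=(1+w^{2})/w$, and writing $|w-1/w|^{\alpha}=(|w-1|\,(w+1)/w)^{\alpha}$ leaves
\[
|F(t)|\le K\,\frac{1}{w(1+\E^{-t})}\left(\frac{w}{1+w}\right)^{\alpha}\frac{1}{|w-1|^{\alpha}}.
\]
Three elementary facts then finish it: \eqref{ineq:exp-log-real} gives $w/(1+w)\le\E^{t}/(1+\E^{t})\le\E^{t}$; the inequality $(1+s)\log(1+s)\ge s$ for $s\ge 0$ (a one-line calculus check) gives $w(1+\E^{-t})=(1+\E^{t})\log(1+\E^{t})/\E^{t}\ge 1$; and $w\le\log 2$ for $t\le 0$ gives $|w-1|=1-w\ge 1-\log 2$. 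Altogether $|F(t)|\le K\E^{\alpha t}/(1-\log 2)^{\alpha}$, and $h\sum_{k=-\infty}^{-M-1}|F(kh)|\le K(1-\log 2)^{-\alpha}\int_{-\infty}^{-Mh}\E^{\alpha x}\,\D x=\frac{K}{\alpha(1-\log 2)^{\alpha}}\E^{-\alpha Mh}$.

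Adding the two halves and applying \eqref{eq:Def-MN} then gives exactly $KC_{6}\E^{-\mu nh}$ with $C_{6}$ as in \eqref{def:C_6}. The only real obstacle I anticipate is spotting and justifying the simplification $4+\{\phi(t)\}^{2}=(w+1/w)^{2}$ and then arranging the bookkeeping with the factors $1+\E^{\pm t}$ so that one lands precisely on $C_{6}$ rather than on a needlessly larger constant; the remainder is a faithful transcription of the earlier truncation-error argument. It is worth noting that this cancellation is exactly what removes the $\E^{1/\pi^{3}}$ factor and lowers the exponent of $1-\log 2$ from $\alpha+1$ (in $C_{4}$) to $\alpha$ (in $C_{6}$).
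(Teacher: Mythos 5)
Your proposal is correct and follows essentially the same route as the paper: the identity $4+\{\phi(t)\}^2=(w+1/w)^2$ with $w=\log(1+\E^t)$, the resulting cancellation against $\phi'$, and the bounds $w/(1+w)\le\E^t/(1+\E^t)$, $(1+\E^{-t})w\ge 1$, and $1-w\ge 1-\log 2$ are exactly the ingredients the paper uses (it merely routes them through an auxiliary general lemma before specializing). The $t\ge 0$ half and the sum-to-integral and $\eqref{eq:Def-MN}$ steps also match the paper verbatim.
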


Setting $h$ as~\eqref{eq:Def-h}, the above estimates
(Theorem~\ref{thm:bound-discretization-error},
Lemmas~\ref{lem:bound-None-2}, and~\ref{lem:bound-truncation-error-2})
yield the desired result as
\begin{align*}
\left|
\int_{-\infty}^{\infty}f(x)\D x
- h\sum_{k=-M}^N f(\phi(kh))\phi'(kh)
\right|
&\leq \frac{2KC_5}{1-\E^{-2\pi d/h}}\E^{-2\pi d/h}
+ K C_6 \E^{-\mu n h}\\
&= K\left( \frac{2 C_5}{1-\E^{-\sqrt{2\pi d \mu n}}}
+ C_6\right)\E^{-\sqrt{2\pi d \mu n}}\\
&\leq K\left( \frac{2 C_5}{1-\E^{-\sqrt{2\pi d \mu}}}
+ C_6\right)\E^{-\sqrt{2\pi d \mu n}}.
\end{align*}
This completes the proof of Theorem~\ref{thm:New2}.

\subsection{Useful inequality on $\domD_d$}
\label{subsec:domDd-2}

We prepare the following lemma here.

\begin{lemma}
Let $d$ be a constant satisfying $0<d<\pi$.
For all $\zeta\in \domD_{d}$ and $x\in\mathbb{R}$, we have
\begin{align}
\left|
\frac{1}{(1+\E^{-\zeta})\log(1+\E^{\zeta})}
\right|
&\leq \frac{1+c_d}{\log(2+c_d)},
 \label{leq:Dd-func-bound-2} \\
\frac{1}{(1+\E^{-x})\log(1+\E^x)}
&\leq 1,
\label{leq:real-func-bound-2}
\end{align}
where $c_d=1/\cos(d/2)$.
\end{lemma}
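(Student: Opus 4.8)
The plan is to reduce both inequalities to a single one‑variable estimate by means of the substitution $\xi=\log(1+\E^{\zeta})$, exactly mirroring the proof of the earlier lemma that established~\eqref{leq:Dd-func-bound} and~\eqref{leq:real-func-bound}. Since $\E^{\xi}=1+\E^{\zeta}$, one has $\E^{\zeta}=\E^{\xi}-1$ and hence $1+\E^{-\zeta}=\E^{\xi}/(\E^{\xi}-1)$, so that
\[
\frac{1}{(1+\E^{-\zeta})\log(1+\E^{\zeta})}=\frac{1-\E^{-\xi}}{\xi}=:p(\xi),
\]
where $p$ extends to an entire function with $p(0)=1$. For the real inequality~\eqref{leq:real-func-bound-2} this is essentially the whole story: here $\xi=\log(1+\E^{x})>0$, and $p(\xi)\le 1$ follows at once from $1-\E^{-t}\le t$ for $t\ge 0$ (equivalently, $p$ is nonincreasing on $(0,\infty)$ with limit $1$ at the origin).

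For the complex inequality~\eqref{leq:Dd-func-bound-2} I would first check that $g(\zeta):=p(\log(1+\E^{\zeta}))$ is analytic in $\domD_d$, bounded there, and continuous on $\overline{\domD_d}$; this is where $d<\pi$ enters. On $\overline{\domD_d}$ the point $1+\E^{\zeta}$ never lies on $(-\infty,0]$ — its imaginary part $\E^{\Re\zeta}\sin(\Im\zeta)$ vanishes only at $\Im\zeta=0$, where $1+\E^{\zeta}>0$ — and $1+\E^{-\zeta}\neq 0$, so $\log(1+\E^{\zeta})$ is holomorphic and nonvanishing. For boundedness I would use~\eqref{ineq:exp-1-plus} to get $\Re\xi=\log|1+\E^{\zeta}|\ge\log\cos(d/2)$, whence $|\E^{-\xi}|\le c_d$ and therefore $|p(\xi)|\le(1+c_d)/|\xi|$ once $|\xi|$ is bounded away from $0$, while $p$ is continuous near $\xi=0$; combined with $g\to 1$ as $\Re\zeta\to-\infty$ and $g\to 0$ as $\Re\zeta\to+\infty$, this gives a bound on the whole strip. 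With these hypotheses in place, the three‑lines lemma (Lemma~\ref{lem:three-lines}) reduces the task to bounding $M_g(\pm d)=\sup_{x\in\mathbb{R}}|g(x\pm\I d)|$ by $(1+c_d)/\log(2+c_d)$.

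To bound $M_g(\pm d)$ I would write $\xi=\log(1+\E^{x\pm\I d})$ and split into two cases just as in the earlier lemma. If $|\xi|\le\log(2+c_d)$, expanding $1-\E^{-\xi}=\sum_{k\ge 1}(-1)^{k+1}\xi^{k}/k!$ gives $|p(\xi)|\le(\E^{|\xi|}-1)/|\xi|$, and since $u\mapsto(\E^{u}-1)/u$ is increasing on $(0,\infty)$ this is at most $(\E^{\log(2+c_d)}-1)/\log(2+c_d)=(1+c_d)/\log(2+c_d)$. If $|\xi|>\log(2+c_d)$, then~\eqref{ineq:exp-1-plus} yields $\Re\xi=\log|1+\E^{x\pm\I d}|\ge\log((1+\E^{x})\cos(d/2))\ge\log\cos(d/2)$, so $|p(\xi)|\le(1+\E^{-\Re\xi})/|\xi|\le(1+c_d)/|\xi|<(1+c_d)/\log(2+c_d)$. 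Hence $M_g(\pm d)\le(1+c_d)/\log(2+c_d)$, and Lemma~\ref{lem:three-lines} propagates this bound to all of $\domD_d$, which is~\eqref{leq:Dd-func-bound-2}.

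The computation is largely routine once the substitution is made; I expect the only genuinely delicate point to be the bookkeeping needed to guarantee that $g$ is analytic and bounded on the strip (the branch‑cut check for $\log(1+\E^{\zeta})$ and the uniform behaviour as $\Re\zeta\to\pm\infty$), so that the three‑lines lemma legitimately applies, together with selecting the correct monotone comparison function in the case $|\xi|\le\log(2+c_d)$.
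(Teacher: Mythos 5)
Your proposal is correct and follows essentially the same route as the paper: the substitution $\xi=\log(1+\E^{\zeta})$ reducing everything to $p(\xi)=(1-\E^{-\xi})/\xi$, the monotonicity argument for the real inequality, and the three-lines lemma with the same two-case split ($|\xi|\le\log(2+c_d)$ via the power series and the increasing function $(\E^{u}-1)/u$, versus $|\xi|>\log(2+c_d)$ via the lower bound $\Re\xi\ge\log\cos(d/2)$ from~\eqref{ineq:exp-1-plus}). The only difference is that you spell out the analyticity and boundedness checks for $g$ on the strip, which the paper simply asserts.
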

\begin{proof}
First, consider~\eqref{leq:real-func-bound-2}, which is proved by showing
\[
 p(t) = \frac{1-\E^{-t}}{t} \leq 1
\]
for all $t > 0$ (put $t=\log(1+\E^x)$).
Differentiating $p(x)$, we have
\[
 p'(t) = -\frac{\E^t - (1+t)}{\E^t t^2} \leq 0,
\]
since $\E^t \geq 1 + t$ holds.
Therefore, $p(t)$ decreases monotonically,
and thus, it holds that $p(t)\leq \lim_{t\to 0}p(t) = 1$.

Next, we show~\eqref{leq:Dd-func-bound-2}. Let
$g(\zeta)=1/\{(1+\E^{-\zeta})\log(1+\E^{\zeta})\}$.
Since the function $g(\zeta)$
is analytic and bounded in $\domD_d$ and continuous
on $\overline{\domD_d}$,
by Lemma~\ref{lem:three-lines},
we obtain~\eqref{leq:Dd-func-bound-2} if we show the following two inequalities:
\begin{align*}
M_g(d) \leq \frac{1+c_d}{\log(2+c_d)},
\quad
M_g(-d) \leq \frac{1+c_d}{\log(2+c_d)},
\end{align*}
where $M_g(y) = \sup_{x\in\mathbb{R}}|g(x+\I y)|$.
We show only the first one, because the second one is also shown
in the same way.
Putting $\xi=\log(1+\E^{x + \I d})$, $g(x + \I d)=p(\xi)$ holds,
and thus, in what follows we prove
\begin{equation}
|p(\xi)|\leq \frac{1+c_d}{\log(2+c_d)}.
\label{leq:Dd-func-bound-target-2}
\end{equation}
We consider the following two cases: (a) $|\xi|\leq \log(2+c_d)$
and (b) $|\xi|>\log(2+c_d)$.
In case (a), we have
\begin{align*}
|p(\xi)|
=\left|\sum_{k=1}^{\infty}\frac{(-\xi)^{k-1}}{k!}\right|
\leq \sum_{k=1}^{\infty}\frac{|\xi|^{k-1}}{k!}
=\frac{\E^{|\xi|}-1}{|\xi|}.
\end{align*}
Here, if we put $q(x)=(\E^x - 1)/x$,
then we have $q'(x)=r(x)/x^2$, where $r(x)=1 + (x-1)\E^x$.
Since $r'(x)=x\E^x \geq 0$ for $x\geq 0$,
$r(x)$ monotonically increases for $x\geq 0$.
Therefore, $r(x)\geq r(0)=0$ holds, from which we have
$q'(x)\geq 0$ for $x\geq 0$, i.e., $q(x)$ monotonically increases
for $x\geq 0$.
Thus, from $|\xi|\leq \log(2+c_d)$, we have~\eqref{leq:Dd-func-bound-target-2}
as
\[
 |p(\xi)|\leq q(|\xi|)
 \leq \frac{\E^{\log(2+c_d)} - 1}{\log(2+c_d)}
=\frac{1+c_d}{\log(2+c_d)}.
\]
In case (b), using~\eqref{eq:bound-re-log},
we have
\[
 |p(\xi)|\leq \frac{1 + |\E^{-\xi}|}{|\xi|}
=\frac{1 + \E^{-\Re\xi}}{|\xi|}
\leq \frac{1 + \E^{-\log(\cos(d/2))}}{|\xi|}
= \frac{1 + c_d}{|\xi|}.
\]
Furthermore, since $1/x$ decreases monotonically for $x>0$,
we have~\eqref{leq:Dd-func-bound-target-2}.
This completes the proof.
\end{proof}

\subsection{Estimation of the discretization error (proof of Lemma~\ref{lem:bound-None-2})}
\label{subsec:discretization-error-2}

Lemma~\ref{lem:bound-None-2} is essentially shown by the following lemma,
which holds for $0<\delta<\pi$ (not only $0<d<(1+\pi)/2$).

\begin{lemma}
\label{lemma:essential-for-None-2}
Assume that $F$ is analytic in $\domD_{\delta}$ with $0<\delta<\pi$,
and that there exist positive constants $K_{+}$, $K_{-}$,
$\alpha$, and $\beta$
such that
\begin{align}
 |F(\zeta)|&\leq K_{+}\left|
\frac{\E^{1/\log(1+\E^{\zeta})}}{1+\E^{\zeta}}
\right|^{\beta}\label{leq:f-Dd-plus-strip}
\intertext{holds for all $\zeta\in\domD_{\delta}^{+}$, and}
|F(\zeta)|&\leq K_{-}\left|
\frac{\log(1+\E^{\zeta})}{\{1+\log(1+\E^{\zeta})\}\{-1+\log(1+\E^{\zeta})\}}
\right|^{\alpha}\label{leq:f-Dd-minus-strip}
\end{align}
holds for all $\zeta\in\domD_{\delta}^{-}$.
Then, $F$ belongs to $\mathbf{H}^1(\domD_{\delta})$,
and $\mathcal{N}_1(F,\delta)$ is bounded as
\begin{equation}
 \mathcal{N}_1(F,\delta)\leq
\frac{2K_{-}}{\alpha}\left\{\frac{\E c_{\delta}}{(1-\log 2)(\E-1)}\right\}^{\alpha}
+\frac{2K_{+}}{\beta}\left(\E^{1/\log 2} c_{\delta}\right)^{\beta},
\label{eq:bound-None-delta}
\end{equation}
where $c_{\delta}=1/\cos(\delta/2)$.
\end{lemma}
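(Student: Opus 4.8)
The plan is to mimic the proof of Lemma~\ref{lem:bound-None}: analyticity of $F$ in $\domD_{\delta}$ is part of the hypothesis, so the entire task is to establish the bound~\eqref{eq:bound-None-delta} on $\mathcal{N}_1(F,\delta)$, which by Definition~\ref{Def:Hone} is the limit of the boundary integral over $\partial\domD_{\delta}(\epsilon)$. First I would turn the two hypotheses into cleaner pointwise bounds. On $\domD_{\delta}^{+}$ (contained in $\overline{\domD_{\pi}^{+}}$ since $\delta<\pi$), combining~\eqref{leq:f-Dd-plus-strip} with~\eqref{leq:Lemma45} gives
\[
|F(\zeta)|\leq K_{+}\bigl(\E^{1/\log 2}\bigr)^{\beta}\frac{1}{|1+\E^{\zeta}|^{\beta}}.
\]
On $\domD_{\delta}^{-}$, I would write the bracket in~\eqref{leq:f-Dd-minus-strip} as the product $\bigl|\log(1+\E^{\zeta})/\{1+\log(1+\E^{\zeta})\}\bigr|\cdot\bigl|{-1}+\log(1+\E^{\zeta})\bigr|^{-1}$, bound the first factor by $|1+\E^{-\zeta-l}|^{-1}$ via~\eqref{ineq:exp-log-complex} (with $l=\log(\E/(\E-1))$) and the second by $(1-\log 2)^{-1}$ via~\eqref{leq:func-bound-Ddminus}, obtaining
\[
|F(\zeta)|\leq\frac{K_{-}}{(1-\log 2)^{\alpha}}\cdot\frac{1}{|1+\E^{-\zeta-l}|^{\alpha}}.
\]

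Next I would show that the two vertical segments of $\partial\domD_{\delta}(\epsilon)$ contribute nothing in the limit. On the right segment $\Re\zeta=1/\epsilon$, the first bound and~\eqref{ineq:exp-1-plus} give $|F(\zeta)|\leq K_{+}(\E^{1/\log 2})^{\beta}(1+\E^{1/\epsilon})^{-\beta}\cos^{-\beta}(\Im\zeta/2)$; since $|\Im\zeta|<\delta(1-\epsilon)<\delta<\pi$, the factor $\cos^{-\beta}(\Im\zeta/2)$ integrates over the segment with a bound independent of $\epsilon$, while the prefactor $(1+\E^{1/\epsilon})^{-\beta}$ tends to $0$. On the left segment $\Re\zeta=-1/\epsilon$ the same reasoning applies, using the second bound together with~\eqref{ineq:exp-1-minus} applied to $\zeta+l$ (so that $|1+\E^{-\zeta-l}|^{-1}\leq[(1+\E^{1/\epsilon-l})\cos(\Im\zeta/2)]^{-1}$). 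Hence $\mathcal{N}_1(F,\delta)=\int_{-\infty}^{\infty}|F(x-\I\delta)|\D x+\int_{-\infty}^{\infty}|F(x+\I\delta)|\D x$, and I split each integral at $x=0$.

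For $x<0$ the point $x\pm\I\delta$ lies in $\domD_{\delta}^{-}$, so~\eqref{ineq:exp-1-minus} applied to $\zeta+l$ yields $|1+\E^{-x-l\mp\I\delta}|^{-\alpha}\leq c_{\delta}^{\alpha}(1+\E^{-x-l})^{-\alpha}$, and since $(1+\E^{-x-l})^{-\alpha}=\E^{\alpha x}(\E^{x}+\E^{-l})^{-\alpha}\leq\E^{\alpha l}\E^{\alpha x}$ one gets $\int_{-\infty}^{0}(1+\E^{-x-l})^{-\alpha}\D x\leq\E^{\alpha l}/\alpha=(\E/(\E-1))^{\alpha}/\alpha$, hence
\[
\int_{-\infty}^{0}|F(x\pm\I\delta)|\D x\leq\frac{K_{-}}{\alpha}\left\{\frac{\E c_{\delta}}{(1-\log 2)(\E-1)}\right\}^{\alpha}.
\]
For $x>0$ the point $x\pm\I\delta$ lies in $\domD_{\delta}^{+}$, so~\eqref{ineq:exp-1-plus} gives $|1+\E^{x\pm\I\delta}|^{-\beta}\leq c_{\delta}^{\beta}(1+\E^{x})^{-\beta}$, and $(1+\E^{x})^{-\beta}=\E^{-\beta x}(1+\E^{-x})^{-\beta}\leq\E^{-\beta x}$ gives $\int_{0}^{\infty}(1+\E^{x})^{-\beta}\D x\leq1/\beta$, hence
\[
\int_{0}^{\infty}|F(x\pm\I\delta)|\D x\leq\frac{K_{+}}{\beta}\bigl(\E^{1/\log 2}c_{\delta}\bigr)^{\beta}.
\]
Adding the four contributions (two signs, two half-lines) yields exactly~\eqref{eq:bound-None-delta}; in particular this quantity is finite, so $F\in\mathbf{H}^1(\domD_{\delta})$.

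There is no genuine obstacle here — the argument is a routine line-integral estimate parallel to Section~\ref{subsec:discretization-error} — but the bookkeeping requires care: one must consistently apply~\eqref{ineq:exp-1-minus} to the shifted argument $\zeta+l$ rather than to $\zeta$ itself when handling $\domD_{\delta}^{-}$, and one must check that the auxiliary estimates~\eqref{leq:Lemma45}, \eqref{ineq:exp-log-complex}, and~\eqref{leq:func-bound-Ddminus}, which are stated on closures of $\domD_{\pi}^{\pm}$ or $\domD_{\pi}$, genuinely apply on $\domD_{\delta}^{\pm}$ — which is legitimate precisely because the hypothesis $0<\delta<\pi$ makes $\domD_{\delta}\subset\domD_{\pi}$.
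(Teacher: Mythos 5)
Your proposal is correct and follows essentially the same route as the paper's own proof: the same pointwise bounds via~\eqref{leq:Lemma45}, \eqref{ineq:exp-log-complex}, and~\eqref{leq:func-bound-Ddminus}, the same vanishing of the vertical segments via~\eqref{ineq:exp-1-plus} and~\eqref{ineq:exp-1-minus}, and the same splitting and elementary integral estimates giving $(\E/(\E-1))^{\alpha}/\alpha$ and $1/\beta$. No discrepancies worth noting.
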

\begin{proof}
Since $F$ is analytic on $\domD_{\delta}$, the remaining task is
to show~\eqref{eq:bound-None-delta}.
From~\eqref{leq:f-Dd-plus-strip},
by using~\eqref{leq:Lemma45},
it holds for all $\zeta\in\domD_{\delta}^{+}$ that
\begin{align}
|F(\zeta)|&\leq K_{+}
\left|\E^{1/\log(1+\E^{\zeta})}\right|^{\beta}
\left|\frac{1}{1+\E^{\zeta}}\right|^{\beta}
\nonumber\\
&\leq K_{+} \left(\E^{1/\log 2}\right)^{\beta}
\frac{1}{|1+\E^{\zeta}|^{\beta}}.
\label{leq:bound-F-Dd-plus-strip}
\end{align}
Furthermore, from~\eqref{leq:f-Dd-minus-strip},
by using~\eqref{leq:func-bound-Ddminus} and~\eqref{ineq:exp-log-complex},
it holds for all $\zeta\in\domD_{\delta}^{-}$ that
\begin{align}
|F(\zeta)|
&\leq
K_{-}\left|\frac{\log(1+\E^{\zeta})}{1+\log(1+\E^{\zeta})}\right|^{\alpha}
\frac{1}{|-1+\log(1+\E^{\zeta})|^{\alpha}}
\nonumber \\
&\leq K_{-} \frac{1}{|1+\E^{-\zeta-l}|^{\alpha}}
\frac{1}{(1-\log 2)^{\alpha}},
\label{leq:bound-F-Dd-minus-strip}
\end{align}
where $l=\log(\E/(\E - 1))$.
As described earlier, $\mathcal{N}_1(F,d)$ is expressed as~\eqref{eq:def-None}.
Using~\eqref{ineq:exp-1-plus}
and~\eqref{leq:bound-F-Dd-plus-strip},
we have
\begin{align*}
\int_{-\delta(1-\epsilon)}^{\delta(1-\epsilon)}|F(1/\epsilon + \I y)|\D y
&\leq K_{+} \left(\E^{1/\log 2}\right)^{\beta}
\int_{-\delta(1-\epsilon)}^{\delta(1-\epsilon)}
\frac{1}{|1+\E^{1/\epsilon+\I y}|^{\beta}}\D y\\
&\leq \frac{K_{+} \left(\E^{1/\log 2}\right)^{\beta}}
           {(1+\E^{1/\epsilon})^{\beta}}
\int_{-\delta(1-\epsilon)}^{\delta(1-\epsilon)}
\frac{1}{\cos^{\beta}(y/2)}\D y,
\end{align*}
from which we have
\[
 \lim_{\epsilon\to 0}
\int_{-\delta(1-\epsilon)}^{\delta(1-\epsilon)}|F(1/\epsilon + \I y)|\D y
= 0.
\]
In the same manner,
using~\eqref{ineq:exp-1-minus}
and~\eqref{leq:bound-F-Dd-minus-strip},
we have
\[
 \lim_{\epsilon\to 0}
\int_{-\delta(1-\epsilon)}^{\delta(1-\epsilon)}|F(-1/\epsilon + \I y)|\D y
= 0.
\]
Therefore, $\mathcal{N}_1(F,\delta)$ is expressed as
\begin{align*}
\mathcal{N}_1(F,\delta)
&=\int_{-\infty}^{\infty}|F(x-\I \delta)|\D x
 +\int_{-\infty}^{\infty}|F(x+\I \delta)|\D x\\
&=\int_{-\infty}^{0}|F(x-\I \delta)| \D x
 +\int_{0}^{\infty}|F(x-\I \delta)|\D x
 +\int_{-\infty}^{0}|F(x+\I \delta)|\D x
 +\int_{0}^{\infty}|F(x+\I \delta)|\D x.
\end{align*}
With regard to the first term,
using~\eqref{ineq:exp-1-minus}
and~\eqref{leq:bound-F-Dd-minus-strip},
we have
\begin{align*}
\int_{-\infty}^{0}|F(x-\I \delta)|\D x
&\leq \frac{K_{-}}{(1- \log 2)^{\alpha}}
\int_{-\infty}^{0}
\frac{1}{|1+\E^{-x-l+\I \delta}|^{\alpha}}
\D x\\
&\leq \frac{K_{-}}{(1- \log 2)^{\alpha}}
\int_{-\infty}^{0}
\frac{1}{(1+\E^{-x-l})^{\alpha}\cos^{\alpha}(\delta/2)}
\D x\\
&=\frac{K_{-} c_{\delta}^{\alpha}}{(1- \log 2)^{\alpha}}
\int_{-\infty}^{0}
\frac{1}{(1+\E^{-x-l})^{\alpha}}
\D x.
\end{align*}
The integral is further bounded as
\begin{align*}
\int_{-\infty}^{0}
\frac{1}{(1+\E^{-x-l})^{\alpha}}
\D x
&=\int_{-\infty}^{0}
\frac{\E^{\alpha x}}{(\E^x+\E^{-l})^{\alpha}}
\D x\\
&\leq
\int_{-\infty}^{0}
\frac{\E^{\alpha x}}{(0+\E^{-l})^{\alpha}}
\D x\\
&=\left(\frac{\E}{\E - 1}\right)^{\alpha}\frac{1}{\alpha}.
\end{align*}
In the same manner, the third term is bounded as
\[
\int_{-\infty}^{0}|F(x+\I \delta)|\D x
\leq
\frac{K_{-} c_{\delta}^{\alpha}}{\alpha(1- \log 2)^{\alpha}}
\left(\frac{\E}{\E - 1}\right)^{\alpha}.
\]
With regard to the second term,
using~\eqref{ineq:exp-1-plus}
and~\eqref{leq:bound-F-Dd-plus-strip},
we have
\begin{align*}
\int_{0}^{\infty}|F(x-\I \delta)|\D x
&\leq K_{+} \E^{\beta/\log 2}
\int_{0}^{\infty}
\frac{1}{|1+\E^{x-\I \delta}|^{\beta}}
\D x\\
&\leq  K_{+} \E^{\beta/\log 2}
\int_{0}^{\infty}
\frac{1}{(1+\E^x)^{\beta}\cos^{\beta}(\delta/2)}
\D x\\
&=K_{+} \E^{\beta/\log 2}c_{\delta}^{\beta}
\int_{0}^{\infty}
\frac{\E^{-\beta x}}{(1+\E^{-x})^{\beta}}
\D x\\
&\leq K_{+} \E^{\beta/\log 2} c_{\delta}^{\beta}
\int_{0}^{\infty}
\frac{\E^{-\beta x}}{(1+0)^{\beta}}
\D x\\
&=K_{+} \E^{\beta/\log 2}
\frac{c_{\delta}^{\beta}}{\beta}.
\end{align*}
In the same manner, the fourth term is bounded as
\[
\int_{0}^{\infty}|F(x+\I \delta)|\D x
\leq K_{+} \E^{\beta/\log 2}
\frac{c_{\delta}^{\beta}}{\beta}.
\]
Thus, we obtain~\eqref{eq:bound-None-delta}.
\end{proof}

Using this lemma,
Lemma~\ref{lem:bound-None-2} is shown as follows.

\begin{proof}
Let $F(\zeta)=f(\phi(\zeta))\phi'(\zeta)$.
Since $f$ is analytic in $\phi(\domD_d)$,
$f(\phi(\cdot))$ is analytic in $\domD_d$.
In addition, since $\phi'$ is analytic in $\domD_{\pi}$,
$F$ is analytic in $\domD_d$ (note that $d<\pi$).
Therefore, the remaining task is to show
$\mathcal{N}_1(F,d)\leq 2KC_5$.
Using~\eqref{ineq:exp-1-minus}, we have
\[
\frac{1}{|1+\E^{-\zeta}|}
\leq \frac{1}{(1+\E^{-\Re\zeta})\cos((\Im\zeta)/2)}
\leq \frac{1}{(1+ 0)\cos(d/2)}
\]
for all $\zeta\in\domD_d$.
Therefore, from~\eqref{leq:f-Dd-plus},
by using~\eqref{leq:Lemma46},
it holds for all $\zeta\in\domD_d^{+}$ that
\begin{align*}
 |F(\zeta)|&\leq K|\E^{-\phi(\zeta)}|^{\beta}|\phi'(\zeta)| \\
&=K\left|\E^{1/\log(1+\E^{\zeta})}\right|^{\beta}
\left|\frac{1}{1+\E^{\zeta}}\right|^{\beta}
\frac{|1+\{\log(1+\E^{\zeta})\}^2|}{|1+\E^{-\zeta}||\log(1+\E^{\zeta})|^2}\\
&\leq K\left|\frac{\E^{1/\log(1+\E^{\zeta})}}{1+\E^{\zeta}}\right|^{\beta}
c_d \left\{1+ \frac{1}{(\log 2)^2}\right\},
\end{align*}
where $c_d=1/\cos(d/2)$.
Furthermore, from~\eqref{leq:f-Dd-minus-new},
by using~\eqref{leq:Dd-func-bound-2},
it holds for all $\zeta\in\domD_d^{-}$ that
\begin{align*}
|F(\zeta)|
&\leq K \frac{1}{|4 + \{\phi(\zeta)\}^2|^{1/2}|\phi(\zeta)|^{\alpha}}
\left|\phi'(\zeta)\right|\\
&= K \left|\frac{\log(1+\E^{\zeta})}{1+\log(1+\E^{\zeta})}\right|^{\alpha}
\frac{1}{|-1+\log(1+\E^{\zeta})|^{\alpha}}
\frac{1}{|(1 + \E^{-\zeta})\log(1+\E^{\zeta})|}\\
&\leq K \left|
\frac{\log(1+\E^{\zeta})}{\{1+\log(1+\E^{\zeta})\}\{-1+\log(1+\E^{\zeta})\}}
\right|^{\alpha}
\frac{1+c_d}{\log(2+c_d)}.
\end{align*}
Thus, the assumptions of Lemma~\ref{lemma:essential-for-None-2}
are fulfilled with $\delta=d$ and
\begin{align*}
K_{+} &= K c_d\left\{1 + \frac{1}{(\log 2)^2}\right\},\\
K_{-} &= K \frac{1 + c_d}{\log(2 + c_d)},
\end{align*}
from which we have $\mathcal{N}_1(F,d)\leq 2KC_5$.
\end{proof}

\subsection{Estimation of the truncation error (proof of Lemma~\ref{lem:bound-truncation-error-2})}
\label{subsec:truncation-error-2}

Lemma~\ref{lem:bound-truncation-error-2}
is essentially shown by the following lemma.

\begin{lemma}
\label{lemma:essential-for-truncate-2}
Assume that there exist positive constants
$K_{+}$, $K_{-}$, $\alpha$, and $\beta$ such that
\begin{align}
 |F(x)|&\leq K_{+}\left|
\frac{\E^{1/\log(1+\E^{x})}}{1+\E^{x}}
\right|^{\beta}\label{leq:f-Dd-plus-real}
\intertext{holds for all $x\geq 0$, and}
|F(x)|&\leq K_{-}\left|
\frac{\log(1+\E^{x})}{\{1+\log(1+\E^{x})\}\{-1+\log(1+\E^{x})\}}
\right|^{\alpha}\label{leq:f-Dd-minus-real}
\end{align}
holds for all $x < 0$.
Let $\mu = \min\{\alpha,\beta\}$, and
let $M$ and $N$ be defined as~\eqref{eq:Def-MN}.
Then, we have
\begin{equation}
h\sum_{k=-\infty}^{-M-1}|F(kh)|
+ h\sum_{k=N+1}^{\infty}|F(kh)|
\leq \left\{
\frac{K_{-}}{\alpha(1 - \log 2)^{\alpha}}
+\frac{K_{+}}{\beta}\left(\E^{1/\log 2}\right)^{\beta}
\right\}
\E^{-\mu n h}.
\label{eq:bound-truncate-2}
\end{equation}
\end{lemma}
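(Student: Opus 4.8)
The plan is to bound the right and left tails separately, reproducing the structure of the proof of Lemma~\ref{lem:bound-truncation-error}. First I would treat the right tail $h\sum_{k=N+1}^{\infty}|F(kh)|$. Starting from the hypothesis~\eqref{leq:f-Dd-plus-real}, I would invoke~\eqref{leq:Lemma45} to bound $|\E^{1/\log(1+\E^{x})}|\leq\E^{1/\log 2}$ and use the elementary inequality $1/(1+\E^{x})\leq\E^{-x}$, obtaining $|F(x)|\leq K_{+}(\E^{1/\log 2})^{\beta}\E^{-\beta x}$ for all $x\geq 0$. Since $\E^{-\beta x}$ is decreasing, the sum is dominated by an integral, $h\sum_{k=N+1}^{\infty}\E^{-\beta kh}\leq\int_{Nh}^{\infty}\E^{-\beta x}\D x=\E^{-\beta Nh}/\beta$, which gives $h\sum_{k=N+1}^{\infty}|F(kh)|\leq(K_{+}/\beta)(\E^{1/\log 2})^{\beta}\E^{-\beta Nh}$.

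Next I would treat the left tail $h\sum_{k=-\infty}^{-M-1}|F(kh)|$. From the hypothesis~\eqref{leq:f-Dd-minus-real}, I would use the real inequality~\eqref{ineq:exp-log-real}, which yields $|\log(1+\E^{x})/(1+\log(1+\E^{x}))|\leq\E^{x}/(1+\E^{x})\leq\E^{x}$, together with~\eqref{leq:func-bound-Ddminus} specialized to real arguments $x<0$, which gives $1/|-1+\log(1+\E^{x})|\leq 1/(1-\log 2)$. Multiplying these two bounds and raising to the power $\alpha$ yields $|F(x)|\leq K_{-}\E^{\alpha x}/(1-\log 2)^{\alpha}$ for all $x<0$. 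Comparing the sum with an integral once more, $h\sum_{k=-\infty}^{-M-1}\E^{\alpha kh}\leq\int_{-\infty}^{-Mh}\E^{\alpha x}\D x=\E^{-\alpha Mh}/\alpha$, so $h\sum_{k=-\infty}^{-M-1}|F(kh)|\leq(K_{-}/(\alpha(1-\log 2)^{\alpha}))\E^{-\alpha Mh}$.

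Finally I would add the two bounds and invoke the choice of $M$ and $N$ in~\eqref{eq:Def-MN}: when $\mu=\alpha$ one has $\alpha M=\alpha n=\mu n$ and $\beta N\geq\alpha n=\mu n$, while when $\mu=\beta$ one has $\beta N=\beta n=\mu n$ and $\alpha M\geq\beta n=\mu n$; in either case $\E^{-\alpha Mh}\leq\E^{-\mu nh}$ and $\E^{-\beta Nh}\leq\E^{-\mu nh}$. Substituting these into the two tail estimates yields exactly~\eqref{eq:bound-truncate-2}.

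I do not expect any genuine obstacle here, as the argument is essentially computational. The only points that need a little care are the pointwise bounds $1/(1+\E^{x})\leq\E^{-x}$ and $\E^{x}/(1+\E^{x})\leq\E^{x}$, and the verification that the definitions in~\eqref{eq:Def-MN} force $\alpha M\geq\mu n$ and $\beta N\geq\mu n$, so that both tails decay at the common rate $\E^{-\mu nh}$.
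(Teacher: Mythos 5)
Your proposal is correct and follows essentially the same route as the paper: the same pointwise bounds via~\eqref{leq:Lemma45}, \eqref{ineq:exp-log-real}, and~\eqref{leq:func-bound-Ddminus}, the same sum-to-integral comparison for each tail, and the same use of~\eqref{eq:Def-MN} to reduce both exponents to $\mu n h$. No gaps.
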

\begin{proof}
From~\eqref{leq:f-Dd-plus-real},
by using~\eqref{leq:Lemma45},
it holds for all $x\geq 0$ that
\begin{align*}
|F(x)|\leq
K_{+} \left(\E^{1/\log(1+\E^x)}\right)^{\beta}
\left(\frac{\E^{-x}}{1+\E^{-x}}\right)^{\beta}
\leq K_{+}
\E^{\beta/\log 2}
\frac{\E^{-\beta x}}{(1+0)^{\beta}}.
\end{align*}
Using this estimate, we have
\begin{align*}
h\sum_{k=N+1}^{\infty}|F(kh)|
\leq K_{+} \E^{\beta/\log 2}
h\sum_{k=N+1}^{\infty}\E^{-\beta k h}
\leq K_{+} \E^{\beta/\log 2}
\int_{Nh}^{\infty}\E^{-\beta x}\D x
= K_{+} \E^{\beta/\log 2}
   \frac{\E^{-\beta Nh}}{\beta}.
\end{align*}
Next, from~\eqref{leq:f-Dd-minus-real},
using~\eqref{ineq:exp-log-real} and~\eqref{leq:func-bound-Ddminus},
it holds for all $x < 0$ that
\begin{align*}
|F(x)|
\leq K_{-} \left|\frac{\log(1+\E^x)}{1+\log(1+\E^x)}\right|^{\alpha}
\frac{1}{|-1 + \log(1+\E^x)|^{\alpha}}
\leq K_{-}\left(\frac{\E^x}{1+\E^x}\right)^{\alpha}
 \frac{1}{(1 - \log 2)^{\alpha}}
\leq K_{-} \frac{\E^{\alpha x}}{(1+ 0)^{\alpha}}
 \frac{1}{(1 - \log 2)^{\alpha}}.
\end{align*}
Using this estimate, we have
\begin{align*}
h \sum_{k=-\infty}^{-M-1} |F(kh)|
\leq  \frac{K_{-}}{(1 - \log 2)^{\alpha}}
h\sum_{k=-\infty}^{-M-1}\E^{\alpha k h}
\leq \frac{K_{-}}{(1 - \log 2)^{\alpha}}
\int_{-\infty}^{-Mh}\E^{\alpha x}\D x
= \frac{K_{-}}{(1 - \log 2)^{\alpha}}
   \frac{\E^{-\alpha Mh}}{\alpha}.
\end{align*}
Thus, using~\eqref{eq:Def-MN}, we have~\eqref{eq:bound-truncate-2}.
\end{proof}

Using this lemma,
Lemma~\ref{lem:bound-truncation-error-2} is shown as follows.

\begin{proof}
Let $F(x)=f(\phi(x))\phi'(x)$.
From~\eqref{leq:f-Dd-plus},
by using~\eqref{leq:Lemma46},
it holds for all $x\geq 0$ that
\begin{align*}
 |F(x)|&\leq K|\E^{-\phi(x)}|^{\beta}|\phi'(x)| \\
&=K\left|\E^{1/\log(1+\E^{x})}\right|^{\beta}
\left|\frac{1}{1+\E^{x}}\right|^{\beta}
\frac{1}{1+\E^{-x}}
\frac{1+\{\log(1+\E^{x})\}^2}{\{\log(1+\E^{x})\}^2}\\
&\leq K\left|\frac{\E^{1/\log(1+\E^{x})}}{1+\E^{x}}\right|^{\beta}
\frac{1}{1 + 0}
\left\{1+ \frac{1}{(\log 2)^2}\right\}.
\end{align*}
Next, from~\eqref{leq:f-Dd-minus-new},
using~\eqref{leq:real-func-bound-2},
it holds for all $x< 0$ that
\begin{align*}
|F(x)|
&\leq K \frac{1}{|1 + \{\phi(x)\}^2|^{1/2}|\phi(x)|^{\alpha}}\phi'(x)\\
&= K \left|\frac{\log(1+\E^x)}{1+\log(1+\E^x)}\right|^{\alpha}
\frac{1}{|-1 + \log(1+\E^x)|^{\alpha}}\cdot
\frac{1}{(1+\E^{-x})\log(1+\E^{x})}\\
&\leq K\left|
\frac{\log(1+\E^x)}{\{1+\log(1+\E^x)\}\{-1+\log(1+\E^x)\}}
\right|^{\alpha}\cdot 1.
\end{align*}
Thus,
the assumptions of Lemma~\ref{lemma:essential-for-truncate-2}
are fulfilled with
\begin{align*}
K_{+} &= K\left(1 + \frac{1}{(\log 2)^2}\right),\\
K_{-} &= K,
\end{align*}
which completes the proof.
\end{proof}


\bibliography{NewConformalMapUnilateral}

\end{document}